\newcommand{\Ibb}{\mathbb{I}}
\newcommand{\Fbb}{\mathbb{F}}
\newcommand{\Abb}{\mathbb{A}}
\newcommand{\Tbb}{\mathbb{T}}
\newcommand{\depth}{\mathfrak{l}}
\newcommand{\md}{\mathfrak{a}}
\newcommand{\alt}{\mathrm{alt}}
\newcommand{\lav}{\mathsf{v}}
\newcommand{\law}{\mathsf{w}}
\newcommand{\F}{\mathbb F}
\newcommand{\fmap}{\mathfrak{f}}
\newcommand{\gmap}{\mathfrak{g}}
\newcommand{\hmap}{\mathfrak{h}}
\newcommand{\Sqmap}{{\mathfrak{S}}}
\newcommand{\Ehat}{\widehat{\mathrm{CF}}}
\newcommand{\la}{\mathsf{u}}
\newcommand{\Hbb}{\mathbb{H}}
\newcommand{\ovl}{\overline}
\newcommand{\Tt}{\mathfrak{t}}
\newcommand\Dual{\mathcal D}
\newcommand\Duality\Dual
\newcommand{\relspinc}{{\underline{\spinc}}}
\newcommand\x{\mathbf x}
\newcommand\tb{\mathbf t}
\newcommand\z{\mathbf z}
\newcommand\y{\mathbf y}
\newcommand\ModSphere{\ModFlow\left({\mathbb S}\longrightarrow
\Sym^{g-1}(\Sigma_{1})\times \Sym^2(\Sigma_{2})\right)}
\newcommand\ModSpheres\ModSphere
\newcommand\CF{CF}
\newcommand\UnparModSp{\widehat \ModSp}
\newcommand\UnparModFlow\UnparModSp
\newcommand{\spinc}{\mathfrak s}
\newcommand\ModMaps{\mathcal M}
\newcommand\ModSp\ModMaps
\newcommand\Tb{{\mathbb T}_{\beta}}
\newcommand\Tc{{\mathbb T}_{\gamma}}
\newcommand\Td{{\mathbb T}_{\delta}}
\newcommand\alphas{\mbox{\boldmath$\alpha$}}
\newcommand\betas{\mbox{\boldmath$\beta$}}
\newcommand\gammas{\mbox{\boldmath$\gamma$}}
\newcommand\deltas{\mbox{\boldmath$\delta$}}
\newcommand\Ring{\mathbb A}
\newcommand\spincrel\relspinc
\newtheorem{thm}{Theorem}[section]
\newtheorem{prop}[thm]{Proposition}
\newtheorem{cor}[thm]{Corollary}
\newtheorem{lem}[thm]{Lemma}
\newtheorem{remark}[thm]{Remark}
\newtheorem{quest}{Question}[section]
\theoremstyle{definition}
\newtheorem{defn}{Definition}[section]
\newtheorem{example}{Example}[section]
\def\endproof{\relax\ifmmode\expandafter\endproofmath\else
  \unskip\nobreak\hfil\penalty50\hskip.75em\hbox{}\nobreak\hfil\bull
  {\parfillskip=0pt \finalhyphendemerits=0 \bigbreak}\fi}
\def\endproofmath$${\eqno\bull$$\bigbreak}
\def\bull{\vbox{\hrule\hbox{\vrule\kern3pt\vbox{\kern6pt}\kern3pt\vrule}\hrule}}
\newcommand{\Z}{\mathbb{Z}}
\newcommand{\ModSWfour}{\mathcal{M}}
\newcommand{\ModFlow}{\ModSWfour}
\newcommand{\SpinC}{{\mathrm{Spin}}^c}
\newcommand\abuts\Rightarrow
\newcommand\Sym{\mathrm{Sym}}
\newcommand{\Farc}{{\mathbb{I}}}
\newcommand{\HFKT}{\text{HFK}}
\newcommand{\CFT}{\mathrm{CF}}
\newcommand{\ra}{\rightarrow}
\newcommand{\Sig}{\Sigma}
\providecommand\@dotsep{5}
\def\listtodoname{List of Todos}
\def\listoftodos{\@starttoc{tdo}\listtodoname}
\begin{document}
\title{Knot Floer homology and the 
unknotting number}%
\author{Akram Alishahi}
\thanks{AA was supported by NSF Grants DMS-1505798 and DMS-1811210.}

\address{Department of Mathematics, Columbia University, 
New York, NY 10027}
\email{alishahi@math.columbia.edu}
\author{Eaman Eftekhary}%
\address{School of Mathematics, Institute for Research in 
Fundamental Sciences (IPM), P. O. Box 19395-5746, Tehran, Iran}%
\email{eaman@ipm.ir}
\begin{abstract}
Given a knot $K\subset S^3$, let $u^-(K)$ (respectively, $u^+(K)$) 
denote the minimum number of negative (respectively, positive) 
crossing changes  among all unknotting sequences for $K$. We use 
knot Floer homology to construct the invariants $\depth^-(K),
\depth^+(K)$ and $\depth(K)$, which give lower bounds on $u^-(K),
u^+(K)$ and the unknotting number $u(K)$, respectively. The invariant 
$\depth(K)$ only vanishes for the unknot, and satisfies 
$\depth(K)\geq \nu^-(K)$, while the difference 
$\depth(K)-\nu^-(K)$ can be arbitrarily large.
We also present several applications towards bounding the 
unknotting number, the alteration number and the Gordian distance.  
\end{abstract}
\maketitle
\section{Introduction}
Given a knot $K\subset S^3$, 
by an \emph{unknotting sequence} for $K$ we mean a sequence of 
crossing changes  for $K$ which results in  the unknot. 
The minimum length of an unknotting sequence for $K$ is 
called the {\emph{unknotting number}} of $K$ and is denoted 
by $u(K)$. Let $u^-(K)$ denote the minimum number of
negative crossing changes (i.e. changes of a  
positive crossing  to a  negative crossing) among 
all unknotting sequences for $K$ and  $u^+(K)$ denote the 
minimum number of  positive crossing changes 
among all such sequences. It is then clear that 
$u(K)\geq u^+(K)+u^-(K)$, while the equality is not necessarily 
satisfied. The unknotting number is one of the simplest,
yet most mysterious and intractable invariants of knots in 
$S^3$.   The answer to several simple questions about 
the unknotting number is still not known. In particular, the 
the following question is widely open.

\begin{quest}\label{quest:additivity}
If $K$ and $L$ are knots in $S^3$, is it true that 
$u(K\#L)=u(K)+u(L)$? How about the (weaker) inequality 
$u(K\#L)\geq \max\{u(K),u(L)\}$?
\end{quest}

Scharlemann proved that composite knots have unknotting number 
at least $2$ \cite{Schar}. 
However, no matter how large $u(K)$ and $u(L)$ are, it is not 
known in general whether $u(K\#L)\geq 3$ \cite{Lackenby}.

Another example is Milnor's conjecture on the unknotting number 
of the torus knot $T_{p,q}$, which remained open for a long time, 
until a proof was given by Kronheimer and Mrowka using 
gauge theory \cite{KM-Milnor-conj}. 
Ozsv\'ath and Szab\'o reproved
it using their invariant $\tau(K)$ \cite{OS-tau} and
Rasmussen gave a purely combinatorial proof  by introducing 
his invariant $s(K)$ \cite{Rasmussen-s}. Both 
$|\tau(K)|$ and $|s(K)|/2$, as well as classical lower bounds for 
the unknotting number coming from Levine-Tristram signatures
\cite{J-Levine,Tristram}, are in fact lower bounds for 
the $4$-ball genus $g_4(K)$. Since $g_4(K)\leq u(K)$, they 
also give lower bounds for the unknotting number.
Nevertheless, lower bounds for $u(K)$ constructed 
by bounding the $4$-ball genus fail to give effective data 
 for many classes of knots. In particular, if 
$-{K}$ denotes the mirror image of the knot $K$, the knot
$L=K\#-{K}$ is always slice and $\tau(L)=s(L)=0$. 
It is thus interesting to construct lower bounds for 
$u(K)$, which do not come from bounds on $g_4(K)$.
In this paper, we use knot Floer homology to construct the 
invariants $\depth^-(K),\depth^+(K)=\depth^-(-{K})$ 
and $\depth(K)$ associated with a knot $K\subset S^3$  
and prove the following theorem.

\begin{thm}\label{thm:intro}
For every knot $K\subset S^3$ we have 
\begin{itemize}
\item $\depth^+(K)\leq u^+(K)$, $\depth^-(K)\leq u^-(K)$ and 
$\depth(K)\leq u(K)$.
\item $\depth^-(K)\geq \nu^-(K)\geq \tau(K)$ and 
$\depth^+(K)\geq \nu^-(-{K})\geq -\tau(K)$.  Therefore, for 
every $0\le t\le 1$ we have $t\depth^+(K)\geq \Upsilon_K(t)
\geq -t\depth^-(K)$.
\item  $\depth(K)\geq \widehat{\Tt}(K)$ where $\widehat{\Tt}(K)$ 
is the maximum order of $U$-torsion in $\mathrm{HFK}^-(K)$.
\end{itemize}
\end{thm}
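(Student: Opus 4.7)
The plan is to verify the three bullets separately, each relying on a different feature of the invariants $\depth^\pm$ and $\depth$ constructed in the paper.

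For the upper bounds in the first bullet, I would use the fact that $\depth^\pm$ and $\depth$ are designed to be subadditive under crossing changes: if $K'$ is obtained from $K$ by a single negative crossing change, then $\depth^-(K)\leq \depth^-(K')+1$, with the analogous statement for positive crossing changes, and $\depth(K)\leq \depth(K')+1$ for arbitrary crossing changes. The main input is the chain map on $\mathrm{CFK}^-$ induced by the crossing-change cobordism, together with the skein exact triangle relating $\mathrm{CFK}^-(K_+)$, $\mathrm{CFK}^-(K_-)$ and the resolved link. Iterating along an optimal unknotting sequence of length $u^-(K)$ and using $\depth^-(\text{unknot})=0$ then yields $\depth^-(K)\leq u^-(K)$; the arguments for $u^+(K)$ and $u(K)$ are formally identical after decorating each crossing change with its sign.

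For the second bullet, the core task is to show $\depth^-(K)\geq \nu^-(K)$: from the definition of $\depth^-$, any witness of small depth should produce a chain-level element of $\mathrm{CFK}^-(K)$ with sufficiently controlled Alexander grading, which is exactly what $\nu^-(K)$ measures. The inequality $\nu^-(K)\geq \tau(K)$ is the Hom--Wu bound, and its mirror gives $\nu^-(-K)\geq -\tau(K)$. The Upsilon sandwich $t\depth^+(K)\geq \Upsilon_K(t)\geq -t\depth^-(K)$ then follows by combining these with the known linear estimates $-t\nu^-(K)\leq \Upsilon_K(t)\leq t\nu^-(-K)$ for $0\leq t\leq 1$, together with the identity $\depth^+(K)=\depth^-(-K)$.

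For the third bullet, I would prove that a single crossing change alters the maximum $U$-torsion order $\widehat{\Tt}$ of $\mathrm{HFK}^-$ by at most one; combined with $\widehat{\Tt}(\text{unknot})=0$, iteration along any unknotting sequence gives $\widehat{\Tt}(K)\leq u(K)$, and refining the argument to keep track of the underlying chain-level mapping-cone construction upgrades this to $\widehat{\Tt}(K)\leq \depth(K)$. The key lemma is that the crossing-change cobordism map, viewed on the $U$-torsion part of $\mathrm{HFK}^-$, can eliminate at most one generator's worth of torsion; this in turn reduces to a careful analysis of the chain homotopy type of the cobordism map together with a comparison with the skein triangle.

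Controlling the chain-homotopy type of the crossing-change cobordism map with enough precision to detect this one-step shift in $U$-torsion is, in my view, the main technical obstacle of the theorem. Once that chain-level statement is in place, the three bullets follow by essentially formal iteration arguments and the already-established comparisons between $\nu^-$, $\tau$, and $\Upsilon_K$.
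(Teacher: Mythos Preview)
Your plan has the right shape for bullets one and two, but the proposed mechanisms diverge from the paper in ways that matter, and bullet three has a genuine gap.

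For the first bullet, the skein exact triangle is a red herring. The paper does not use it. Instead, for a single crossing change it produces, via a band surgery and a careful holomorphic-triangle computation, a pair of $\Abb$-chain maps $\fmap^+:\CFT(K)\to\CFT(K')$ and $\fmap^-:\CFT(K')\to\CFT(K)$ of Alexander degrees $0$ and $1$ (or $1$ and $0$, depending on the sign) whose compositions are each chain homotopic to multiplication by $\law$. Composing these along an unknotting sequence $\Ibb$ with $m^\pm(\Ibb)$ positive/negative changes yields homogeneous maps $\CFT(K)\leftrightarrows\Abb$ of degrees $m^+(\Ibb)$ and $m^-(\Ibb)$ whose compositions are $\law^{|\Ibb|}$; the bound $\depth^\pm(K)\leq u^\pm(K)$ then follows \emph{directly from the definition} of $\depth$. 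Your subadditivity formulation is equivalent after the fact, but the skein triangle is not the ingredient that produces these maps.

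For the third bullet, your route does not reach the stated inequality. Proving that a crossing change moves $\widehat{\Tt}$ by at most one only yields $\widehat{\Tt}(K)\leq u(K)$; it says nothing about $\depth(K)$, which is defined in terms of \emph{arbitrary} homogeneous chain maps $\fmap^\pm$ between $\CFT(K)$ and $\Abb$ with $\fmap^\mp\circ\fmap^\pm\simeq\law^m$, not in terms of a sequence of crossing-change maps. The ``refinement'' you gesture at is in fact the entire argument, and it is much shorter than you suggest: if $\fmap^+:\CFT(K)\to\Abb$, $\fmap^-:\Abb\to\CFT(K)$ realize $\depth(K)=m$, tensor with $\Fbb[\law]$ over $\Abb$ (setting $\la=0$) to get $\widehat{\fmap}^\pm$ between $\Ehat(K)$ and $\Fbb[\law]$ with $\widehat{\fmap}^-\circ\widehat{\fmap}^+\simeq\law^m$. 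On homology, multiplication by $\law^m$ on $\widehat{\Hbb}(K)$ then factors through the torsion-free module $\Fbb[\law]$, so it annihilates $\widehat{\Tbb}(K)$; hence $\widehat{\Tt}(K)\leq m$. There is no ``careful analysis of the chain homotopy type'' or skein comparison required here; the inequality is essentially a tautology once the definition of $\depth$ is unpacked. The technical work in the paper lies entirely in establishing the single-crossing chain maps (for bullet one), not in bounding torsion.
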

Unlike most other lower bounds for the unknotting number, the torsion 
invariant $\widehat{\Tt}$ resists the connected sum operation.
\begin{cor}\label{cor-2:intro}
If $K$ and $K'$ are knots in $S^3$ then
\[u(K\#K')\geq \widehat{\Tt}(K\#K')\ge
\max\{\widehat{\Tt}(K),\widehat{\Tt}(K')\}.\]
In particular, for the torus knot $T_{p,q}$ with 
$0<p<q$, $\widehat{\Tt}(T_{p,q})=p-1$ and for every knot 
$K\subset S^3$
\[u(K\# T_{p,q})\geq p-1.\]
\end{cor}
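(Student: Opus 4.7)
The plan is as follows. The first inequality follows immediately from Theorem \ref{thm:intro} applied to the connected sum $K\#K'$: the first bullet gives $u(K\#K')\geq \depth(K\#K')$, the third bullet gives $\depth(K\#K')\geq \widehat{\Tt}(K\#K')$, and concatenating yields
\[u(K\#K')\;\geq\;\widehat{\Tt}(K\#K').\]

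For the sub-additivity $\widehat{\Tt}(K\#K')\geq \max\{\widehat{\Tt}(K),\widehat{\Tt}(K')\}$, I would invoke the Ozsv\'ath--Szab\'o connected-sum formula $\CFKm(K\#K')\simeq \CFKm(K)\otimes_{\mathbb{F}[U]}\CFKm(K')$, together with the K\"unneth theorem over the PID $\mathbb{F}[U]$. Since $\CFKm$ is free over $\mathbb{F}[U]$, K\"unneth produces a (non-canonical) direct-summand embedding
\[\HFKm(K)\otimes_{\mathbb{F}[U]}\HFKm(K')\;\hookrightarrow\;\HFKm(K\#K').\]
The classification of finitely generated modules over $\mathbb{F}[U]$, combined with the fact that $\HFKm$ of a knot in $S^3$ has free rank exactly one (reflecting $HF^-(S^3)\cong\mathbb{F}[U]$ and the $\tau$-tower), lets me write $\HFKm(K)\cong \mathbb{F}[U]\oplus T_K$ and $\HFKm(K')\cong \mathbb{F}[U]\oplus T_{K'}$ with $T_K,T_{K'}$ the $U$-torsion submodules. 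Expanding the tensor product exhibits $T_K\oplus T_{K'}$ as a summand of $\HFKm(K\#K')$, so the maximal $U$-torsion order on the right inherits those of $T_K$ and $T_{K'}$, giving the desired inequality.

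For the equality $\widehat{\Tt}(T_{p,q})=p-1$ with $0<p<q$, I would use that $T_{p,q}$ is an L-space knot, so $\CFKm(T_{p,q})$ is the staircase complex whose horizontal and vertical step lengths are determined by the Alexander polynomial of $T_{p,q}$, equivalently by the gap sequence of the numerical semigroup $\langle p,q\rangle$. A direct inspection of these step sizes under the assumption $0<p<q$ identifies the maximal horizontal step length --- which is precisely the maximal $U$-power needed to kill a torsion generator in $\HFKm$ --- with $p-1$. Concatenating the three ingredients above yields $u(K\#T_{p,q})\geq \widehat{\Tt}(K\#T_{p,q})\geq \widehat{\Tt}(T_{p,q})=p-1$.

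The main technical obstacle is the staircase computation for $T_{p,q}$: obtaining not merely the upper bound but the exact equality $\widehat{\Tt}(T_{p,q})=p-1$ requires a careful combinatorial analysis translating the gap structure of $\langle p,q\rangle$ into the $U$-torsion orders of the staircase generators. The K\"unneth step is by comparison routine, provided one tracks Alexander gradings carefully and notes that the Tor contribution in the K\"unneth formula can only add further torsion, never cancel what is already present in the tensor summand.
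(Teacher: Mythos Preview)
Your proposal is correct and follows essentially the same route as the paper: the bound $u\geq\widehat{\Tt}$ comes from Theorem~\ref{thm:intro}, the connected-sum inequality is exactly the paper's Proposition~\ref{prop:connected-sum-torsion} (K\"unneth over $\Fbb[\law]$, using that the free-rank-one summand tensored with torsion produces the torsion summands $\widehat{\Tbb}(K)$ and $\widehat{\Tbb}(K')$), and the torus-knot computation is carried out in Example~\ref{ex:torus-knot} via the staircase complex. The only cosmetic difference is that the paper reads off the step lengths $a_{2i-1}-a_{2i}$ directly from the Alexander polynomial exponents rather than the semigroup $\langle p,q\rangle$, and verifies $a_1-a_2=p-1$ is maximal; your semigroup formulation is equivalent (the longest run of gaps in $\langle p,q\rangle$ has length $p-1$ since every block of $p$ consecutive integers contains a multiple of $p$).
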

Therefore, for every coprime $0<p<q$, 
$\depth(-T_{p,q}\#T_{p,q})\ge p-1$, while the lower bounds 
$\nu^-, |\tau|$ and $|s|/2$ vanish, because $-T_{p,q}\#T_{p,q}$ 
is slice.

Theorem~\ref{thm:intro} naturally reproves the following corollary.
\begin{cor}\label{cor-1:intro}
For every knot $K\subset S^3$, $\nu^-(K)$ is a lower bounds
for $u^-(K)$, while $\nu^-(-K)$ is a lower bound for 
$u^+(K)$.  In particular, $u^-(T_{p,q})=(p-1)(q-1)/2$. 
\end{cor}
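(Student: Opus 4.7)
The plan is to extract Corollary 1.3 as a direct consequence of Theorem 1.1 combined with the standard unknotting of positive torus knots and Ozsv\'ath--Szab\'o's computation of $\tau$. First, Theorem 1.1 supplies the chain $u^-(K)\geq \depth^-(K)\geq \nu^-(K)$, which is exactly the first inequality of the corollary. Applying the analogous chain (or equivalently, applying the first inequality to the mirror $-K$ using $u^-(-K)=u^+(K)$) gives $u^+(K)\geq \depth^+(K)\geq \nu^-(-K)$.

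For the torus knot statement $u^-(T_{p,q})=(p-1)(q-1)/2$, I would prove the two inequalities separately. For the upper bound, the torus knot $T_{p,q}$ with $0<p<q$ admits a positive braid presentation whose Seifert algorithm applied to the standard diagram yields $(p-1)(q-1)/2$ positive crossings that can be changed to the unknot; each such crossing change converts a positive crossing to a negative one and is therefore a \emph{negative} crossing change in the convention of the paper. Consequently $u^-(T_{p,q})\leq (p-1)(q-1)/2$. For the lower bound, the first part of the corollary gives $u^-(T_{p,q})\geq \nu^-(T_{p,q})\geq \tau(T_{p,q})$, and the classical computation $\tau(T_{p,q})=(p-1)(q-1)/2$ (Ozsv\'ath--Szab\'o) completes the chain, forcing equality throughout.

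There is essentially no technical obstacle, since all of the real work has been packaged into Theorem 1.1. The only point requiring attention is bookkeeping of signs of the crossing changes: one must verify that the natural unknotting of the positive torus braid really consists of crossing changes of type ``positive to negative'' (i.e.\ negative crossing changes) so that the upper bound lands in $u^-$ and not in $u^+$. Once this sign convention is checked against the definition of $u^-$ used in the paper, the corollary follows immediately by combining Theorem 1.1 with the known value of $\tau(T_{p,q})$.
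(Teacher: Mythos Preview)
Your proposal is correct and matches the paper's approach: the corollary is stated as an immediate consequence of Theorem~1.1 (the paper gives no separate proof, only the sentence ``Theorem~\ref{thm:intro} naturally reproves the following corollary''), and the torus knot equality follows exactly as you say from the classical upper bound $u(T_{p,q})\leq (p-1)(q-1)/2$ together with $\tau(T_{p,q})=(p-1)(q-1)/2$. One cosmetic remark: the Seifert algorithm produces a surface rather than a set of crossings to change; the fact you actually need is the standard elementary unknotting of the positive torus braid by $(p-1)(q-1)/2$ positive-to-negative crossing switches, which is the classical upper bound on $u(T_{p,q})$.
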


Associated with a knot $K\subset S^3$, one can construct a 
Heegaard Floer chain complex $\CFT(K)$, which is freely generated 
over $\Abb=\Fbb[\la,\law]$ by the intersection points associated 
with a Heegaard diagram for $K$. $\CFT(K)$ is equipped with 
differential $d$, which is an $\Abb$-homomorphism defined by 
counting holomorphic disks \cite{AE-1}. 
Let $\Hbb(K)$ denote the homology of $(\CFT(K),d)$, which is 
again a module over $\Abb$. Let 
$\Tbb(K)$ denote the torsion submodule of $\Hbb(K)$, i.e.
$\Tbb(K)$ consists of  $x\in \Hbb(K)$ such that there exists
a non-zero $a\in\Abb$ with $a\cdot x=0$.  Then, $\Hbb(K)$ sits 
in a short exact sequence 
\begin{displaymath}
\begin{diagram}
0&\rTo& \Tbb(K)&\rTo&\Hbb(K)&\rTo&\Abb(K)&\rTo&0,
\end{diagram}
\end{displaymath}
where the torsion free part $\Abb(K)$ of the homology 
is isomorphic to an ideal in $\Abb$. Specifically, for every
knot $K$, there is an {\emph{ideal sequence}} 
$\imath(K)=(i_0=0<i_1<\cdots<i_n=\nu^-(K))$ of some length 
$n=n(K)$ and a canonical identification  
\begin{align*}
\Abb(K)=\langle \la^{i_k}\law^{i_{n-k}}\ |\ k=0,1,\ldots,n
\rangle_\Abb\leq \Abb.
\end{align*}
We define $\Tt(K)$ as the 
smallest integer $m$ such that $\law^m$ acts trivially on 
$\Tbb(K)$ (i.e. maps $\Tbb(K)$ to zero). For the unknot $U$, 
we have $\Tbb(U)=0$ and $\Abb(U)=\Abb$.

If $K'$ is obtained from $K$ by a sequence of $m$ 
negative crossing changes and $n$ positive crossing changes,
we use the cobordism maps constructed in \cite{AE-2} to show 
that $\law^n\Abb(K)\subset \Abb(K')$ and 
$\law^m\Abb(K')\subset \Abb(K)$, while 
$\law^{m+n}\Tbb(K)$ may be embedded in $\Tbb(K')$.
This observation implies, in particular, that $\nu^-(K)$ is a 
lower bound for $u^-(K)$ and that $\Tt(K)$ is lower bound for 
$u(K)$. 

The above construction also gives lower bounds on the {\emph{Gordian
distance}} $u(K,K')$ from a knot $K$ to another knot $K'$, i.e.
the minimum number of crossing changes required to get from $K$ 
to $K'$. In particular, we give the following three lower bounds on 
the {\emph{alteration number}} $\alt(K)$, defined as the least 
Gordian distance of an alternating knot from $K$.

\begin{prop}\label{prop:intro-alternation-for-K}
The alternation number $\alt(K)$ of a knot $K\subset S^3$
satisfies the inequalities
\[\alt(K)\geq \nu^-(K)-\md(K),\quad\alt(K)\geq \widehat{\Tt}(K)-1
\quad\text{and}\quad
\alt(K)\geq\min\{\Tt(K)-1,|\nu^-(K)|\},\]
where $\md(K)$ is the minimum degree of a monomial in $\Abb(K)$.
In particular, it follows that 
\[\alt(T_{p,pn+1})\geq n\left\lfloor\frac{(p-1)^2}{4}\right\rfloor.\]
\end{prop}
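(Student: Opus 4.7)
The plan is to select an alternating knot $L$ realizing $\alt(K)$, decompose the optimal crossing-change sequence from $K$ to $L$ into $m_+$ positive and $m_-$ negative crossing changes with $\alt(K)=m_++m_-$, and feed this into the three inclusions
\[
\law^{m_+}\Abb(K)\subset\Abb(L),\qquad \law^{m_-}\Abb(L)\subset\Abb(K),\qquad \law^{\alt(K)}\Tbb(K)\hookrightarrow\Tbb(L)
\]
recalled just before the proposition. The key input is the structure of the knot Floer invariants of alternating knots: thinness of $\HFK(L)$ forces $\Abb(L)=(\la,\law)^{\nu^-(L)}$ (so that $\md(L)=\nu^-(L)$ and every monomial of $\Abb(L)$ has total $(\la,\law)$-degree at least $\nu^-(L)$), and both $\la$ and $\law$ annihilate $\Tbb(L)$, giving $\Tt(L),\widehat{\Tt}(L)\leq 1$.

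For the first inequality, applying the first inclusion to each generator $\la^{i_k}\law^{i_{n-k}}$ of $\Abb(K)$ forces $i_k+i_{n-k}+m_+\geq \nu^-(L)$; minimizing over $k$ yields $\md(K)+m_+\geq \nu^-(L)$. Applying the second inclusion to the generator $\law^{\nu^-(L)}$ of $\Abb(L)$ gives $\law^{\nu^-(L)+m_-}\in\Abb(K)$, whence $\nu^-(L)+m_-\geq \nu^-(K)$. Summing yields $\alt(K)\geq \nu^-(K)-\md(K)$.

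For the second inequality, $\widehat{\Tt}(L)\leq 1$ combined with the torsion embedding gives $\law^{\alt(K)+1}\Tbb(K)=0$ at the $\HFK^-$-level, so $\widehat{\Tt}(K)\leq\alt(K)+1$. For the third, the plan is to case-split on whether the realizer $L$ is the unknot. If $L=U$, the second inclusion applied to $1\in\Abb(U)=\Abb$ forces $\law^{m_-}\in\Abb(K)$, so $m_-\geq\nu^-(K)$ and hence $\alt(K)\geq|\nu^-(K)|$ (passing to the mirror if needed to handle a possible sign). If $L\neq U$, then $L$ is a nontrivial alternating knot, $\Tt(L)\leq 1$, and the torsion embedding yields $\Tt(K)\leq\alt(K)+1$. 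Either way, $\alt(K)\geq\min\{\Tt(K)-1,|\nu^-(K)|\}$.

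For the torus-knot consequence, substitute $\nu^-(T_{p,pn+1})=p(p-1)n/2$ into the first bullet; the remaining work is to identify the ideal sequence of $T_{p,pn+1}$ (determined by the semigroup generated by $p$ and $pn+1$, equivalently by the Alexander polynomial) and verify that the minimum of $i_k+i_{n-k}$ over $k$ equals $p(p-1)n/2-n\lfloor(p-1)^2/4\rfloor$. This last step is the main technical obstacle: one must pinpoint the index $k$ that realizes $\md(T_{p,pn+1})$ inside the staircase and verify that the gap below $\nu^-$ is exactly $n\lfloor(p-1)^2/4\rfloor$.
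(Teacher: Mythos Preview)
Your arguments for the first two inequalities and the torus-knot consequence match the paper's approach essentially line for line. The paper likewise uses $\law^{m_+}\Abb(K)\subset\Abb(L)$ together with the fact that every monomial in $\Abb(L)$ has degree at least $\nu^-(L)$, and combines this with $\nu^-(L)\geq\nu^-(K)-m_-$; for $\widehat{\Tt}$ it simply invokes $u(K,L)\geq\widehat{\Tt}(K)-\widehat{\Tt}(L)$ with $\widehat{\Tt}(L)\leq 1$. For $T_{p,pn+1}$ the paper has already computed $\md(T_{p,pn+1})=n\lfloor p^2/4\rfloor$ explicitly (the generator $\la^{i_{\lfloor n(p-1)/2\rfloor}}\law^{i_{\lceil n(p-1)/2\rceil}}$ realizes the minimum), so the last step you flag as the ``main technical obstacle'' is in fact done elsewhere and just plugged in.

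There is, however, a genuine gap in your third inequality. Your dichotomy ``$L=U$'' versus ``$L\neq U$'' is not the right one, and the assertion ``$\Tt(L)\leq 1$ for every nontrivial alternating $L$'' is false. For instance $\Tt(T_{2,2n+1})=\Tt^+(T_{2,2n+1})=n$, as follows from the description of $\Tbb(-T_{2,2n+1})$ in the torus-knot example. More basically, your preparatory claim that ``both $\la$ and $\law$ annihilate $\Tbb(L)$'' already fails once $\tau(L)<0$: the staircase summand is then $\CFT(-T_{2,2|\tau|+1})$, whose torsion is not killed by $\law$. The paper's case-split is instead on whether $\nu^-(L)=0$ or $\nu^-(L)>0$. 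If $\nu^-(L)=0$ one uses $\nu^-(K)\leq\nu^-(L)+m_-=m_-\leq\alt(K)$, which covers many knots beyond the unknot (figure-eight, any alternating $L$ with $\tau(L)\leq 0$). If $\nu^-(L)>0$, i.e.\ $\tau(L)>0$, then the staircase piece is $\CFT(T_{2,2\tau+1})$ (with trivial torsion) and the remaining summands are squares, so $\Tbb(L)$ \emph{is} annihilated by $\law$; this is what drives the torsion bound in that branch. Your split misses the entire range of nontrivial alternating $L$ with $\tau(L)\leq 0$ in the first branch and overstates the torsion control in the second.
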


A similar strategy is used by the first author in \cite{A-BN}
to construct lower bounds for the unknotting number from 
Khovanov homology. The resulting invariants are used in 
\cite{AD-KM} to prove the {\emph{knight move conjecture}}
for knots with unknotting number at most $2$.

In Section~\ref{sec:construction}
we study the cobordism maps induced on knot chain complexes 
associated with a crossing change. These cobordism maps are used in 
Section~\ref{sec:depth} to construct lower bounds on the Gordian 
distance of knots, while simpler obstructions to the unknotting 
are extracted from these lower bounds in Section~\ref{sec:torsion}.
We discuss several examples and applications in 
Section~\ref{sec:examples}.\\

{\bf{Acknowledgements.}} The authors would like to thank 
Jennifer Hom, Robert Lipshitz and Iman Setayesh
for helpful discussions and suggestions. 

\section{Changing the crossings in knot diagrams}\label{sec:construction} 
By a {\emph{crossing change}} for an oriented link $L\subset Y$ 
we mean replacing a ball in $Y$ in which $L$ looks like a 
positive crossing to the ball in which $L$  looks like a 
negative crossing (a negative crossing change), or the reverse 
of the above operation (a positive crossing change). 
Figure~\ref{fig:crossing-change} illustrates how a band surgery 
on $L$ can be used to do any of the following two changes (or 
the reverse of it):
\begin{itemize}
\item A negative crossing change and adding 
a positively oriented meridian for $L$ as a new link component.
\item A positive crossing change and adding 
a negatively oriented meridian for $L$ as a new link component.
\end{itemize}  
Let us assume that  $K'$ is obtained from $K$ by a negative 
crossing change and that $L$ is obtained from $K'$ by adding 
a positively oriented meridian. As illustrated in 
Figure~\ref{fig:crossing-change}, one may then place a pair of 
markings $p_1,p_2$ on $K$, and distinguish a band 
$\Ibb$ with endpoints on $K\setminus\{p_1,p_2\}$, such that the band 
surgery on $\Ibb$ gives $L$, while $p_1$ lands on $K'$ and 
$p_2$ lands on the positively oriented meridian.

Associated with the pointed link $(K,p_1,p_2)$, we may construct 
a tangle (equivalently, a sutured manifold) as follows. Fix an 
orientation on $K$ and consider two disjoint small arc on $K$ 
which contains $p_1$ and $p_2$, respectively. Remove a small 
ball around each one of the four ends of these arcs to obtain 
a $3$-manifold $M$ with $4$ sphere boundary components. Using 
the orientation on $K$ we may orient these spheres so that two 
of them form $\partial^+M$ and the other two form $\partial^-M$, 
see the lower part of Figure~\ref{fig:crossing-change}.
Let $T_1$ and $T_2$ denote the remaining part of the two arcs 
around $p_1$ and $p_2$, respectively, which are now strands in 
$M$ connecting $\partial^+M$ to $\partial^-M$. The complement 
of the two arcs in $K$ gives two other strands $T_3$ and $T_4$
which connect $\partial^+M$ to $\partial^-M$. The $3$-manifold
$M$ and the strands $T_1,T_2,T_3$ and $T_4$ then form a 
tangle associated with $(K,p_1,p_2)$ (see~\cite{AE-2}).
Correspondingly, we also obtain a sutured manifold, which 
is constructed by removing a solid cylinder around each one of 
the strands and considering the boundary of these $4$ solid 
cylinders as the set of sutures on the resulting $3$-manifold.
The construction of authors in \cite{AE-1}, as well as the 
special case considered in \cite[Subsection 8.2]{AE-1},
may be used to associate a chain complex
$\CFT(K,p_1,p_2)$  with this tangle (or sutured 
manifold), which is a module over $\Abb'=\Fbb[\la,\lav,\law]$.
The variables $\la$ and $\lav$ are associated with the strands 
$T_1$ and $T_2$ (equivalently, with $p_1$ and $p_2$), while 
the variable $\law$ is associated with $T_3$ and $T_4$ 
(equivalently, with $K\setminus\{p_1,p_2\}$). Similarly, we 
can associate a chain complex $\CFT(L,p_1,p_2)$ with the 
pointed link $(L,p_1,p_2)$, which is again a module over 
$\Abb'$. The generators of the two complexes all correspond 
to the unique $\SpinC$ structure $\spinc_0$ on $S^3$, which 
will be dropped from the notation.
 
Associated with the framed arc $\Ibb$, the construction of 
\cite{AE-2} gives the $\Abb'$-cobordism maps
\begin{align*}
\gmap^+:\CFT(K,p_1,p_2)\ra \CFT(L,p_1,p_2)
\quad \text{and}\quad
\gmap^-:\CFT(L,p_1,p_2)\ra \CFT(K,p_1,p_2).
\end{align*} 

\begin{figure}
\def\svgwidth{11cm}
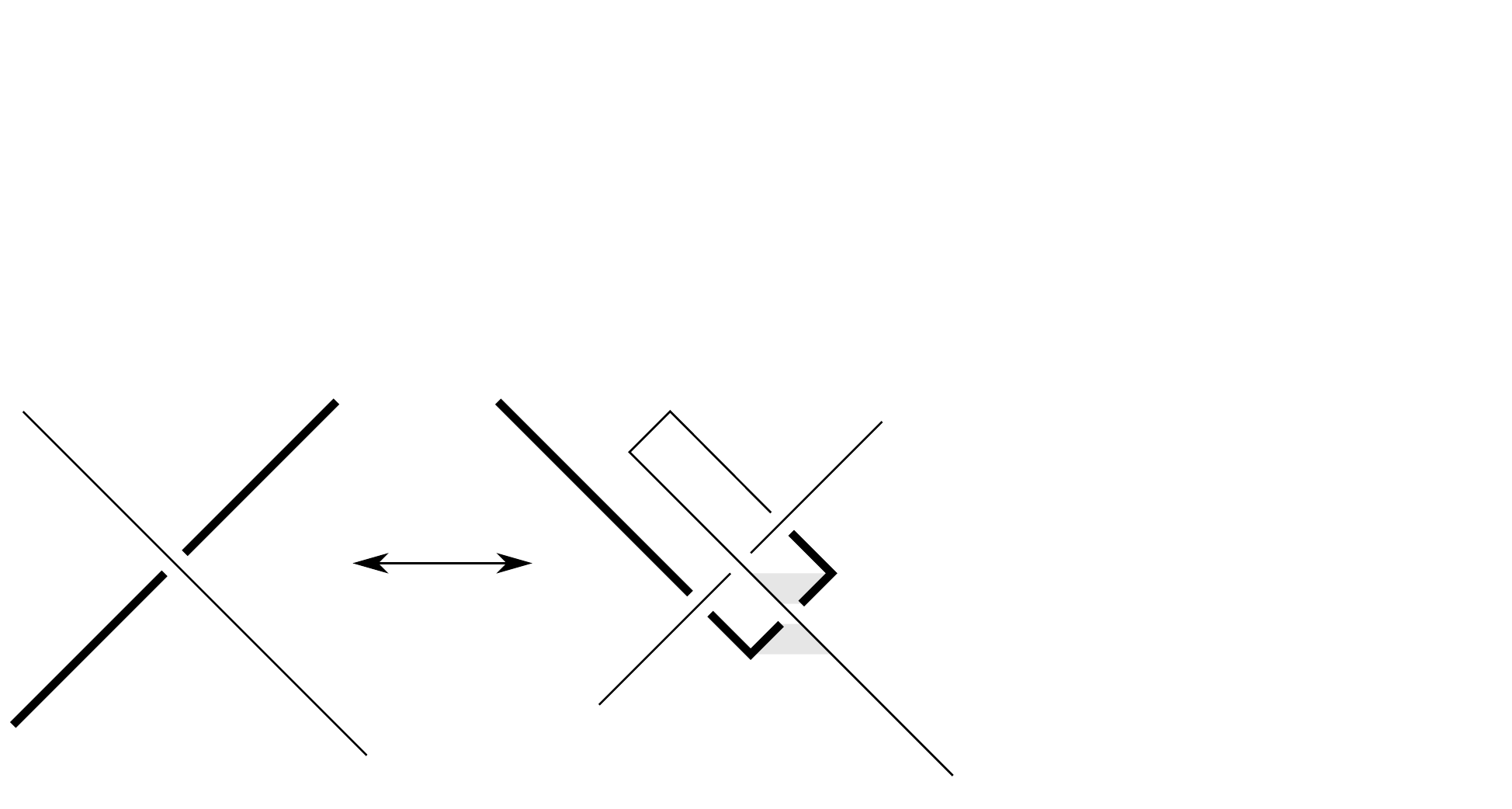
\caption{
We may change a crossing in the expense of adding a meridian. 
The meridian can be positively or negatively oriented depending 
on whether the initial crossing is negative or positive, 
respectively. 
}\label{fig:crossing-change}
\end{figure} 

\begin{lem}\label{lem:reversing-arcs}
With the above notation fixed, the maps 
\[\gmap^+\circ \gmap^-:\CFT(L,p_1,p_2)\ra \CFT(L,p_1,p_2)
\quad\text{and}\quad 
\gmap^-\circ \gmap^+:\CFT(K,p_1,p_2)\ra \CFT(K,p_1,p_2)
\]
are both multiplication by $\law$, up to chain homotopy.
\end{lem}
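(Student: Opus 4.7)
The plan is to reduce the composition to a local polygon count near the band $\Ibb$, in which it visibly contributes a single factor of $\law$. By the construction of \cite{AE-2}, each of $\gmap^\pm$ is defined via a pointed Heegaard triple adapted to the framed arc $\Ibb$, in which the third set of attaching curves is obtained from one of the first two by a handleslide supported in a small neighborhood of $\Ibb$. Stacking the two triples produces a Heegaard quadruple diagram for which the standard associativity relation for polygon maps identifies $\gmap^+\circ\gmap^-$ (respectively $\gmap^-\circ\gmap^+$) with a count of holomorphic rectangles, up to chain homotopy.

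First I would isotope $\Ibb$ and its reverse band so that both lie in a common small ball around the crossing region; then the two handleslides in the Heegaard quadruple become supported in a common local picture, outside of which the diagram is a product. Topologically, the resulting self-cobordism of the pointed link is ambient isotopic rel boundary to the identity, except that a small $2$-sphere (the ``neck'' formed by the forward band glued to its reverse) sweeps past one of the strands $T_3, T_4$ exactly once; this geometric feature is what forces a $\law$ to appear in the algebraic count.

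Second I would perform the rectangle count in the local diagram. After the handleslides the local picture has genus $0$ or $1$ with a small number of intersection points, so the positive admissible domains with generator corners can be enumerated directly. I expect a unique nontrivial contribution whose domain has multiplicity one at a basepoint on $T_3$ (or $T_4$) and multiplicity zero at the basepoints on $T_1$ and $T_2$, yielding $\law$; the remaining potential rectangles should either cancel in pairs via the standard handleslide triangle identity, or fail admissibility. Outside the local picture the diagram is a product, and the gluing theorem of \cite{AE-1} together with the fact that handleslide triangle maps on the product region induce the identity on $\CFT$ lets us propagate the local answer to the full chain complexes.

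The main obstacle will be the bookkeeping: one has to verify that the unique contributing domain has multiplicity zero at the $\la$ and $\lav$ basepoints, and that the total coefficient is exactly $\law$ rather than a higher power or a sum involving other variables. Additional care is needed to treat $\gmap^+\circ\gmap^-$ and $\gmap^-\circ\gmap^+$ separately, since they pass through different intermediate tangles (one through $L$, one through $K$), although the local models for the two compositions should agree up to an obvious reflection. Finally, one must confirm that the chain homotopy identification is independent of the auxiliary Heegaard data used to define $\gmap^\pm$, which will follow from the naturality of the triangle maps under handleslides and isotopies of $\Ibb$.
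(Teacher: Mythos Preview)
Your overall strategy---form a Heegaard quadruple from the two band-surgery triples and invoke the associativity relation to reduce to a local polygon count that picks up a single $\law$---is exactly the paper's approach. However, you have the roles in the associativity relation reversed, and this would send you down the wrong computation. The holomorphic \emph{rectangle} map $\Sqmap$ is the chain homotopy, not the thing the composition is homotopic to: the relation reads
\[
d\circ\Sqmap+\Sqmap\circ d=\gmap^-\circ\gmap^+\;+\;\fmap_{\alpha\beta\delta}\big(-\otimes \fmap_{\beta\gamma\delta}(\Theta_{\beta\gamma}\otimes\Theta_{\gamma\delta})\big),
\]
so what you must compute is the \emph{triangle} map $\fmap_{\beta\gamma\delta}(\Theta_{\beta\gamma}\otimes\Theta_{\gamma\delta})$, not a rectangle count.

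The paper's key choice, which you do not make explicit, is to take $\deltas$ to be a small Hamiltonian isotope of $\betas$ disjoint from $\z$. This buys two things at once. First, the entire $(\betas,\gammas,\deltas)$ picture is local to the band region, and the unique small triangle connecting $\Theta_{\beta\gamma},\Theta_{\gamma\delta},\Theta_{\beta\delta}$ covers exactly one of the $z_3,z_4$ basepoints with multiplicity one, giving $\fmap_{\beta\gamma\delta}(\Theta_{\beta\gamma}\otimes\Theta_{\gamma\delta})=\law\,\Theta_{\beta\delta}$; this single triangle replaces your proposed rectangle enumeration and the attendant admissibility/cancellation bookkeeping. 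Second, since $\deltas$ is a pushoff of $\betas$, the remaining factor $\fmap_{\alpha\beta\delta}(-\otimes\Theta_{\beta\delta})$ is the standard nearest-point map, chain homotopic to the identity on the full complex, so no separate gluing argument with a ``product region'' is needed. Your geometric intuition about the neck sweeping once past $T_3$ or $T_4$ is correct and is precisely what that small triangle encodes.
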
 
\begin{proof}
For defining $\gmap^+$ we may use a triple 
\[(\Sig,\alphas,\betas,\gammas,\z=\{z_1,z_2,z_3,z_4\})\] 
subordinate to the framed arc $\Ibb$, where $z_i$ corresponds
to the strand $T_i$. The corresponding $\Abb'$-coloring maps
$z_1$  to $\la$ and $z_2$  to $\lav$, while $z_3$ and $z_4$ 
are mapped to $\law$. If $\deltas$ is obtained by a 
small Hamiltonian isotopy from $\betas$ which do not cross 
$\z$, then $(\Sig,\alphas,\gammas,\deltas,\z)$ is 
subordinate to $\ovl{\Ibb}$, the reverse band surgery. 
Associated with the Heegaard quadruple
$H=(\Sig,\alphas,\betas,\gammas,\deltas,\z)$
we obtain: 
\begin{itemize}
\item the top generators $\Theta_{\beta\gamma}, 
\Theta_{\gamma\delta}$ and $\Theta_{\beta\delta}$ in 
$\Tb\cap\Tc, \Tc\cap\Td$ and $\Tb\cap \Td$, respectively,
\item the triangle maps $\fmap_{\alpha\beta\gamma}$,
$\fmap_{\alpha\gamma\delta}$,
$\fmap_{\alpha\beta\delta}$ and $\fmap_{\beta\gamma\delta}$,
which are associated with the triples $(\alphas,\betas,\gammas)$,
$(\alphas,\gammas,\deltas)$, $(\alphas,\betas,\deltas)$
and $(\betas,\gammas,\deltas)$, respectively, and the induced 
maps $\gmap^+=\fmap_{\alpha\beta\gamma}
(-\otimes\Theta_{\beta\gamma})$ and $\gmap^-
=\fmap_{\alpha\gamma\delta}(-\otimes\Theta_{\gamma\delta})$,
\item and the holomorphic square map $\Sqmap$ which satisfies 
\[d\circ\Sqmap+\Sqmap\circ d=
\gmap^+\circ\gmap^-+
\fmap_{\alpha\beta\delta}(-\otimes 
\fmap_{\beta\gamma\delta}(\Theta_{\beta\gamma}
\otimes\Theta_{\gamma\delta})).\]
\end{itemize}
The position of the curves in $\betas\cup\gammas\cup\deltas$, 
which is basically illustrated in 
Figure~\ref{fig:reversing-arcs}, implies that 
\[\fmap_{\beta\gamma\delta}(\Theta_{\beta\gamma}
\otimes\Theta_{\gamma\delta})=\law\Theta_{\beta\delta}.\]
Since $\fmap_{\alpha\beta\delta}(-\otimes\Theta_{\beta\delta})$ 
gives a map chain homotopic to the identity on 
$\CFT(L,p_1,p_2)$, the above observation completes the proof for 
the composition $\gmap^+\circ\gmap^-$. A similar argument implies 
that $\gmap^-\circ\gmap^+$ is chain homotopic to multiplication 
by $\law$.
\end{proof}

Removing $p_2$ from $K$, we obtain a knot with a single marked 
point on it. Correspondingly, we find a tangle with two strands 
and the  standard knot chain complex $\CFT(K)$ for 
$K$, which is a module over $\Abb=\Fbb[\la,\law]$. Similarly, 
there is a single marked point on $K'$, and associated with it 
we obtain the chain complex $\CFT(K')$, which is again an 
$\Abb$-module. There are chain homotopy equivalences
\begin{align*}
&\CFT(K,p_1,p_2)\simeq\left(\CFT(K)\otimes_{\Abb}
\Abb'\xrightarrow{\la+\lav}\CFT(K)\otimes_{\Abb}\Abb'\right)
\quad\text{and}\\
&\CFT(L,p_1,p_2)\simeq\left(\left(\CFT(K')\oplus\CFT(K')\right)
\otimes_{\Abb} \Abb'\xrightarrow{\begin{bmatrix}0&\law\\ \la+\lav
&\la \end{bmatrix}} \left(\CFT(K')\oplus\CFT(K')\right)
\otimes_{\Abb} \Abb'\right),
\end{align*}
where the latter is deduced from the identification 
$L=K'\#(\text{Hopf link})$, and the chain complex 
$(C\xrightarrow{f}C')$ is defined as the mapping cylinder of 
the chain map $f:C\ra C'$ between two chain complexes.
Corresponding to the above chain homotopy equivalences, we may 
present $\gmap^+$ and $\gmap^-$ as $4\times 2$ and $2\times 4$ 
matrices 
$(\gmap^+_{ij})_{ij}$ and 
$(\gmap^-_{ji})_{ji}$, where 
\begin{align*}
&\gmap^+_{ij}:\CFT(K)\otimes_{\Abb}\Abb'
\ra \CFT(K')\otimes_{\Abb} \Abb'\quad\text{and}
&\gmap^-_{ji}:\CFT(K')\otimes_{\Abb} \Abb'
\ra \CFT(K)\otimes_{\Abb}\Abb'.
\end{align*}

\begin{figure}
\def\svgwidth{5.5cm}
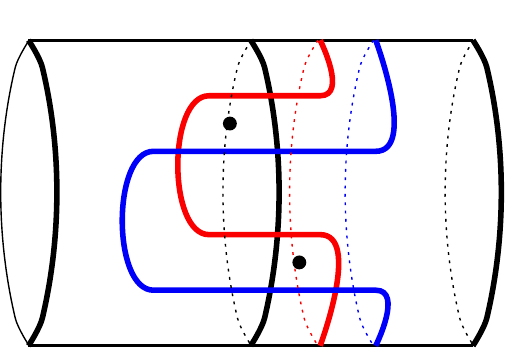
\caption{
If $\deltas$ is obtained from $\betas$ by a Hamiltonian 
isotopy supported away from the marked points, the domain of the 
distinguished triangle class in $(\Sig,\betas,\gammas,\deltas,\z)$
connecting $\Theta_{\beta\gamma},
\Theta_{\gamma\delta}$ and $\Theta_{\beta\delta}$ will 
contain one of the marked points corresponding to the strands 
connected by $\Farc$. The intersection of the domain of the 
 triangle with the surface is the small triangle 
connecting $A,B$ and $C$.
}\label{fig:reversing-arcs}
\end{figure}

Let us denote $\la+\lav$ by $\sigma$ and regard $\Abb'$ as 
$\Abb[\sigma]$. For each $1\le i,j\le 2$, we decompose
\[\gmap^+_{ij}=\gmap^+_{ij,0}+\sigma \hmap_{ij}^+\quad\text{and}
\quad \gmap^-_{ji}=\gmap^-_{ji,0}+\sigma\hmap_{ji}^-,\]
where the maps $\gmap^\pm_{ij,0}$ do not use the variable $\sigma$.
We will find chain homotopies such that
\begin{equation}\label{eq:simchainmap}
\gmap^+\simeq\begin{bmatrix}
\gmap_{11,0}^+&0\\
\gmap_{21,0}^+&0\\
\gmap_{31,0}^+&0\\
\gmap_{41,0}^+&\gmap_{11,0}^+
\end{bmatrix}\quad\text{and}\quad
\gmap^-\simeq\begin{bmatrix}
\gmap_{24,0}^-&0&0&0\\
\gmap_{21,0}^-&\gmap_{22,0}^-&\gmap_{23,0}^-&\gmap_{24,0}^-
\end{bmatrix}.
\end{equation}
First, we deduce from $\gmap^+$ and $\gmap^-$ being chain maps that 
\[\begin{split}
&\sigma. \gmap_{12}^+=\gmap_{11}^+\circ d+d\circ \gmap_{11}^+\\
&\sigma.\gmap_{22}^+=\gmap_{21}^+\circ d+d\circ\gmap_{21}^+\\
&\sigma.\gmap_{32}^+=\law.\gmap_{21}^++\gmap_{31}^+\circ d
+d\circ\gmap_{31}^+\\
&\sigma.\gmap_{42}^+=\sigma.\gmap_{11}^++\la.\gmap_{21}^+
+\gmap_{41}^+\circ d+d\circ\gmap_{41}^+
\end{split}\quad\text{and}\quad\quad 
\begin{split}
&\sigma.\gmap_{11}^-=\sigma.\gmap_{24}^-+\gmap_{21}^-\circ d
+d\circ\gmap_{21}^-\\
&\sigma.\gmap_{12}^-=\law.\gmap_{23}^-+\la.\gmap_{24}^-
+\gmap_{22}^-\circ d+d\circ\gmap_{22}^-\\
&\sigma.\gmap_{13}^-=\gmap_{23}^-\circ d+d\circ\gmap_{23}^-\\
&\sigma.\gmap_{14}^-=\gmap_{24}^-\circ d+d\circ\gmap_{24}^-
\end{split}
\]
The differentials $d$ of the complexes do not use the variable 
$\sigma$, so the above equations imply
\[\begin{split}
&\gmap_{12}^+=\hmap_{11}^+\circ d+d\circ \hmap_{11}^+\\
&\gmap_{22}^+=\hmap_{21}^+\circ d+d\circ\hmap_{21}^+\\
&\gmap_{32}^+=\law.\hmap_{21}^++\hmap_{31}^+\circ d
+d\circ\hmap_{31}^+\\
&\gmap_{42}^+=\gmap_{11}^++\la.\hmap_{21}^++\hmap_{41}^
+\circ d+d\circ\hmap_{41}^+
\end{split}\quad\text{and}\quad\quad 
\begin{split}
&\gmap_{11}^-=\gmap_{24}^-+\hmap_{21}^-\circ d+d\circ\hmap_{21}^-\\
&\gmap_{12}^-=\law.\hmap_{23}^-+\la.\hmap_{24}^-+\hmap_{22}^-\circ d
+d\circ\hmap_{22}^-\\
&\gmap_{13}^-=\hmap_{23}^-\circ d+d\circ\hmap_{23}^-\\
&\gmap_{14}^-=\hmap_{24}^-\circ d+d\circ\hmap_{24}^-
\end{split}
\]
Then, it is easy to check that 
\[
H^+=\begin{bmatrix}
0&\hmap_{11}^+\\
0&\hmap_{21}^+\\
0&\hmap_{31}^+\\
0&\hmap_{41}^+
\end{bmatrix}\quad\text{and}\quad
H^-=\begin{bmatrix}
\hmap_{21}^-&\hmap_{22}^-&\hmap_{23}^-&\hmap_{24}^-\\
0&0&0&0
\end{bmatrix}
\]
are the chain homotopies for $\gmap^+$ and $\gmap^-$ which result in 
Equation \ref{eq:simchainmap}, respectively. Abusing the notation we 
keep denoting the new matrixes by $\gmap^-=(\gmap_{ij}^-)$ and 
$\gmap^+=(\gmap_{ij}^+)$.

We now set $\sigma=0$, or equivalently $\lav=\la$. Then, 
$\gmap_{11}^+$ and $\gmap_{11}^-$ induce chain maps
\[\gmap_{11}^+:\CFT(K)\to\CFT(K')\quad\text{and}\quad
\gmap_{11}^-:\CFT(K')\to \CFT(K),\]
and we define $\fmap^+=\gmap^+_{11}$ and $\fmap^-=\gmap^-_{11}$.
Note that $(\gmap^+\circ\gmap^-)_{11}=\gmap^+_{11}\circ\gmap^-_{11}$ 
and $(\gmap^-\circ\gmap^+)_{11}=\gmap^-_{11}\circ\gmap^+_{11}$. 
So, both $\fmap^+\circ\fmap^-$ and $\fmap^-\circ\fmap^+$ are chain 
homotopic to multiplication by $\law$.

If $K'$ is obtained from $K$ by a positive crossing change,
a similar argument may be used to arrive at the same conclusion. 
The above discussion implies the following theorem.

\begin{thm}\label{thm:cobordisim-maps}
If $K'\subset S^3$ is obtained from $K\subset S^3$ by a 
crossing change, there exist chain maps
\begin{align*}
\fmap^+:\CFT(K)\ra \CFT(K')
\quad\text{and}\quad
\fmap^-:\CFT(K')\ra \CFT(K')
\end{align*}
such that $\fmap^+\circ\fmap^-$ and $\fmap^-\circ\fmap^+$ 
are chain homotopic to multiplication by $\law$.\end{thm}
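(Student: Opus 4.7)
The plan is to deduce Theorem~\ref{thm:cobordisim-maps} from Lemma~\ref{lem:reversing-arcs} by passing from the two-marked-point setting to the one-marked-point setting. The lemma already provides maps $\gmap^\pm$ between $\CFT(K,p_1,p_2)$ and $\CFT(L,p_1,p_2)$ whose compositions are chain homotopic to multiplication by $\law$; what remains is to strip off the extra marked point and the meridian component introduced by the band surgery, producing chain maps between the usual knot complexes $\CFT(K)$ and $\CFT(K')$.

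First I would use the chain homotopy equivalences expressing $\CFT(K,p_1,p_2)$ and $\CFT(L,p_1,p_2)$ as mapping cones built from $\CFT(K)\otimes_\Abb\Abb'$ and $(\CFT(K')\oplus\CFT(K'))\otimes_\Abb\Abb'$ respectively. Relative to these decompositions, the cobordism maps become a $4\times 2$ matrix $(\gmap^+_{ij})$ and a $2\times 4$ matrix $(\gmap^-_{ji})$ of $\Abb'$-linear maps. Writing $\Abb' = \Abb[\sigma]$ with $\sigma = \la+\lav$, each entry decomposes as $\gmap^\pm_{ij,0}+\sigma\,\hmap^\pm_{ij}$. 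Comparing coefficients of $\sigma$ in the chain-map relation for $\gmap^\pm$ should give explicit expressions of the entries in the redundant column of $\gmap^+$ and the redundant row of $\gmap^-$ as $(d\circ \hmap^\pm_{ij}+\hmap^\pm_{ij}\circ d)$ plus lower-order contributions from neighbouring entries. Feeding these identities back as the components of two block chain homotopies $H^\pm$ should reduce $\gmap^+$ to a lower-triangular form with a single nontrivial column and $\gmap^-$ to an upper-triangular form with a single nontrivial row.

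Next I would specialize by setting $\sigma=0$, i.e.\ identifying $\lav$ with $\la$. The $(1,1)$-entries of the triangularized matrices then descend to well-defined $\Abb$-linear chain maps
\[\fmap^+ := \gmap^+_{11,0}\colon\CFT(K)\to\CFT(K') \quad\text{and}\quad \fmap^- := \gmap^-_{11,0}\colon\CFT(K')\to\CFT(K).\]
Because the normal forms are triangular with a distinguished $(1,1)$ block, the $(1,1)$-entries of the matrix products $\gmap^+\!\circ\gmap^-$ and $\gmap^-\!\circ\gmap^+$ coincide with $\fmap^+\!\circ\fmap^-$ and $\fmap^-\!\circ\fmap^+$. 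Applying Lemma~\ref{lem:reversing-arcs} and restricting the chain homotopy equivalences with multiplication by $\law$ to the $(1,1)$-block then yields the desired conclusion.

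The main obstacle I anticipate is the combinatorial bookkeeping: verifying that the candidate chain homotopies $H^\pm$ assembled from the $\hmap^\pm_{ij}$ genuinely produce the claimed triangular shape at each of the eight matrix positions, and checking that passing to the $(1,1)$-block commutes with the chain-homotopy relation once $\sigma=0$. Everything else is forced by the algebraic identities that $\gmap^\pm$ are chain maps. The positive-crossing case requires no new idea: the same mapping-cone decomposition applies with the opposite meridian orientation, and the argument produces $\fmap^\pm$ and the same composition identity verbatim.
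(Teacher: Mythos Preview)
Your proposal is correct and follows essentially the same route as the paper: start from Lemma~\ref{lem:reversing-arcs}, express the two-pointed complexes as mapping cones, write $\gmap^\pm$ as block matrices over $\Abb'=\Abb[\sigma]$, use the $\sigma$-expansion of the chain-map identities to build homotopies $H^\pm$ that triangularize, then set $\sigma=0$ and extract the $(1,1)$-entries as $\fmap^\pm$. The only point worth flagging is that the $(1,1)$-entry of the triangularized $\gmap^-$ is $\gmap^-_{24,0}$ in the original indexing (not the original $\gmap^-_{11,0}$), so be careful with the bookkeeping when you write this out; the paper handles this by silently renaming the entries after triangularization.
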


Given a knot $K\subset S^3$, the knot Floer chain complex $\CFT(K)$ 
(which is generated over $\Abb=\Fbb[\la,\law]$) is $\Z$-bigraded. It 
has a Maslov grading $\mu$ and an Alexander grading $A$, as defined 
in \cite{OS-knot}. Multiplication by $\la$ and $\law$ changes these 
gradings by 
\[\mu(\la^a\law^b\x)=\mu(\x)-2a\quad\text{and} 
\quad A(\la^a\law^b\x)=A(\x)-a+b\]

Subsequently, we may write
\[\CFT(K)=\bigoplus_{d,s\in\Z}\CFT_d(K,s),\]
where $d$ and $s$ denote the Maslov and Alexander grading, 
respectively. For instance, for the unknot we obtain
\[\CFT(\text{Unknot})=\F[\la,\law]=\bigoplus_{s\in\Z}\Ring_0(s),\ \ \ 
\text{where}\ \Ring_0(s)=\langle\la^a\law^b\ |\ b-a=s\rangle.\]
\begin{prop}\label{prop:shifts} 
Both $\fmap^+$ and $\fmap^-$ are homogeneous maps. If $K'$ is 
obtained from $K$ by a negative crossing change then $\fmap^+$ and 
$\fmap^-$ have bidegree $(\mu,A)=(0,0)$ and $(0,1)$, respectively. 
Similarly, if $K'$ is obtained from $K$ by a positive crossing change 
then  $\fmap^+$ and $\fmap^-$ have bidegree $(0,1)$ and $(0,0)$, 
respectively.
\end{prop}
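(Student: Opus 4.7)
The plan is to track the bigrading through each step of the construction of $\fmap^\pm$. First, the Maslov and Alexander gradings on $\CFT(K)$ extend canonically to the tangle complexes $\CFT(K,p_1,p_2)$ and $\CFT(L,p_1,p_2)$ over $\Abb'$, with the new variable $\lav$ assigned the same bidegree $(-2,-1)$ as $\la$. The maps $\gmap^\pm$, being triangle maps associated with the Heegaard quadruple $(\Sig,\alphas,\betas,\gammas,\deltas,\z)$, are weighted counts of pseudoholomorphic triangles of Maslov index zero and are therefore automatically homogeneous for these bigradings; the bidegree of $\gmap^+$ (respectively, $\gmap^-$) equals that of the corresponding top generator $\Theta_{\beta\gamma}$ (respectively, $\Theta_{\gamma\delta}$), once one accounts for the multiplicities of the distinguished triangle domain at the four marked points under the $\Abb'$-coloring $z_1\mapsto\la$, $z_2\mapsto\lav$, $z_3,z_4\mapsto\law$.

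Next, I would verify that the mapping cone identifications
\[\CFT(K,p_1,p_2)\simeq\left(\CFT(K)\otimes_\Abb\Abb'\xrightarrow{\sigma}\CFT(K)\otimes_\Abb\Abb'\right),\]
and the analogous one for $L$, preserve bidegree once the relative bigrading shift between the two summands is specified (namely, the second summand is shifted by $(2,1)$ relative to the first, matching the bidegree of $\sigma=\la+\lav$). Under these identifications, the matrix entries $\gmap^\pm_{ij}$ inherit well-defined bidegrees, and the chain homotopies $H^\pm$ built from the $\hmap_{ij}^\pm$ are bigrading-preserving because each of the algebraic identities used to define them balances bidegrees on both sides. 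Setting $\sigma=0$ preserves homogeneity, so $\fmap^+=\gmap^+_{11}$ and $\fmap^-=\gmap^-_{11}$ inherit bidegrees directly from $\gmap^\pm$.

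The remaining task is to compute the bidegrees of the top generators in each case by local inspection of the band surgery picture in the lower half of Figure~\ref{fig:crossing-change}. For a negative crossing change, the distinguished triangle domain for $\Theta_{\beta\gamma}$ avoids all four marked points and has vanishing Maslov shift, yielding bidegree $(0,0)$ for $\fmap^+$; the reverse triangle representing $\Theta_{\gamma\delta}$ carries a single copy of $\law$ (and no net contribution from $\la$ or $\lav$), yielding bidegree $(0,1)$ for $\fmap^-$. For a positive crossing change the orientation of the added meridian is reversed, which by the symmetric local analysis swaps the two Alexander shifts and hence the two bidegrees. The main obstacle is the explicit local multiplicity computation, together with the verification that the Maslov shift vanishes: this last point reflects the triviality of the first Chern class contribution from the local band cobordism and can be extracted from the same kind of local analysis used in the proof of Lemma~\ref{lem:reversing-arcs}.
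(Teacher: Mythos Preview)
Your strategy is plausible but differs from the paper's, and it glosses over the key subtlety that the paper handles explicitly. The paper does not compute the bidegrees of $\gmap^\pm$ by inspecting local triangle domains; instead it cites \cite[Lemma~7.8]{AE-2} for homogeneity and \cite[Lemma~7.2]{Ian-gading} (Zemke's grading formula for band maps) to conclude that \emph{both} $\gmap^+$ and $\gmap^-$ have bidegree $(0,\tfrac12)$ on the tangle complexes. The half-integer Alexander shift is intrinsic to the four-pointed link/tangle setting and does not arise from any local multiplicity count of the sort you describe. The passage from $(0,\tfrac12)$ to the integer shifts $(0,0)$ and $(0,1)$ for $\fmap^\pm$ is then carried out by identifying $\CFT(K,p_1,p_2)\otimes_{\Abb'}\Abb$ with $\CFT(K)\otimes_\Abb V$ (generators of $V$ in bidegrees $(0,0)$ and $(-1,-1)$) and $\CFT(L,p_1,p_2)\otimes_{\Abb'}\Abb$ with $\CFT(K')\otimes_\Abb W$, where $W$ is the explicitly bigraded complex of the right-handed Hopf link with generators in bidegrees $(\tfrac12,\tfrac12)$, $(-\tfrac12,-\tfrac12)$ (twice), and $(-\tfrac32,-\tfrac32)$. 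Reading off which summands the $(1,1)$-entries map between gives the claimed integer shifts.

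Your proposal conflates the bidegree of $\gmap^\pm$ with that of $\fmap^\pm$: you assert that ``the distinguished triangle domain for $\Theta_{\beta\gamma}$ avoids all four marked points'' and immediately conclude bidegree $(0,0)$ for $\fmap^+$, but $\Theta_{\beta\gamma}$ is a generator, not a domain, and the band map $\gmap^+$ in fact carries bidegree $(0,\tfrac12)$, not $(0,0)$. The asymmetry between $\fmap^+$ and $\fmap^-$ does not come from one triangle domain hitting a $\law$-point and the other not; it comes from the different grading shifts of the relevant summands in the Hopf-link model $W$. Your approach could be salvaged, but you would need to set up absolute half-integer gradings on the tangle complexes, verify the $(0,\tfrac12)$ shift for $\gmap^\pm$ (essentially reproving the cited lemma), and then track the summand shifts---at which point you have recreated the paper's argument rather than shortcut it.
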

\begin{proof}
Suppose $K'$ is obtained from $K$ by a negative crossing change. In 
the situation of Lemma~\ref{lem:reversing-arcs}, the chain maps 
$\gmap^+$ and $\gmap^-$ are homogeneous, \cite[Lemma 7.8]{AE-2}, and 
it follows from \cite[Lemma 7.2]{Ian-gading} that $\gmap^+$ and 
$\gmap^-$ are homogeneous of bidegree $(0,1/2)$. 
Furthermore, considering the bigradings, 
\[\CFT(K,p_1,p_2)\otimes_{\Abb'} \Abb=\CFT(K)\otimes_{\Abb} V\]
where $V$ is a free $\Abb$-module with two generators in bidegrees 
$(0,0)$ and $(-1,-1)$. In addition, 
\[\CFT(L,p_1,p_2)\otimes \Abb=\CFT(K')\otimes_{\Abb} W,\]
where $W=\CF(H,p_1,p_2)$ and $H$ is the right-handed Hopf link. 
Specifically, it is a bigraded chain complex of free modules with the 
generators $y_1, y_2, y_3,y_4$ in gradings
\[
\begin{split}
&(\mu(y_3),A(y_3))=(-\frac{3}{2},-\frac{3}{2}),
\quad(\mu(y_4),A(y_4))=(\frac{1}{2},\frac{1}{2})\\
&\text{and}\quad (\mu(y_1),A(y_1))=(\mu(y_2),A(y_2))
=(-\frac{1}{2},-\frac{1}{2}).
\end{split}
\]
and the differential $d$ defined by $d(y_2)=\law y_3+\la y_4$ and 
$d(y_1)=d(y_3)=d(y_4)=0$. Therefore, $\fmap^+$ preserves both Maslov 
and Alexander gradings, while $\fmap^-$ has bidegree $(0,1)$. 
The proof for a positive crossing change is analogous.
\end{proof}

Since the crossing change chain maps $\fmap^+$ and $\fmap^-$ do 
not change the Maslov index, we will drop it from the notation in 
the rest of the paper. Moreover, by degree of a homogeneous chain map
$f$, denoted by $\deg(f)$, we mean the Alexander grading degree of 
$f$.

\section{The depth of a knot and bounding the unknotting number}\label{sec:depth}
Let $K$ and $K'$ be knots in $S^3$ and $\Ibb$ denote a sequence of 
crossing changes which modifies $K$ to $K'$. We denote the length of 
$\Ibb$ by $|\Ibb|$, and the number of positive (resp. negative) 
crossing changes in $\Ibb$ by $m^{+}(\Ibb)$ (resp. $m^{-}(\Ibb)$). 
For $\bullet\in\{+,-\}$, let $u^\bullet(K,K')$ denote the minimum of 
$m^\bullet(\Ibb)$ over all such sequences $\Ibb$ of crossing changes. 
Futher, the {\emph{Gordian distance}} $u(K,K')$ between $K$ and 
$K'$ is defined as the minimum number of crossing changes required 
for modifying $K$ to $K'$. Therefore, 
\[u(K,K')\ge u^-(K,K')+u^+(K,K').\]
Define $u^\bullet(K)=u^\bullet(K,U)$, where $U$ denotes the unknot. 
Note that it is 
possible that $u^+(K)$ and/or $u^-(K)$ are realized in 
an unknotting sequence which does not have minimal length.
The knot $K'$ is called \emph{Gordian adjacent} to $K$ if there 
exists a minimal unknotting sequence for $K$ containing $K'$.  
Equivalently, the Gordian distance $u(K,K')$ from $K$ to $K'$
is $u(K)-u(K')$. Based on Theorem~\ref{thm:cobordisim-maps} we 
make the following definition.
\begin{defn}\label{def:crossings}
Given the knots $K,K'\subset S^3$,
consider all pairs of homogeneous chain maps
\begin{align*}
&\fmap^+:\CFT(K)\ra \CFT(K') \ \ \text{and}\ \ \ 
\fmap^-:\CFT(K')\ra \CFT(K)
\end{align*}
of degrees $m^+=\deg(\fmap^+)$ and $m^-=\deg(\fmap^-)$ 
such that $\fmap^-\circ\fmap^+$ and $\fmap^+\circ\fmap^-$ are chain 
homotopic to multiplication by $\law^m$, where $m^-+m^+=m$. 
Define $\depth^-(K,K'),\depth^+(K,K')$ and $\depth(K,K')$  
as the least values for the integers $\deg(\fmap^-),\deg(\fmap^+)$ 
and $m=\deg(\fmap^-)+\deg(\fmap^+)$ (respectively) among all such 
pairs. In particular, define $\depth^\pm(K)=\depth^\pm(K,U)$
and $\depth(K)=\depth(K,U)$, where $U$ denotes the unknot.
\end{defn}

When $K'=U$, the chain complex $\CFT(U)$ is chain homotopic to $\Abb$ 
(with trivial differentials). For defining $\depth^\pm(K)$ and 
$\depth(K)$, we are thus lead to consider all pairs of homogeneous 
chain maps
\begin{align*}
&\fmap^+:\CFT(K)\ra \Abb \ \ \text{and}\ \ \ 
\fmap^-:\Abb\ra \CFT(K)
\end{align*}
of degrees $m^+=\deg(\fmap^+)$ and $m^-=\deg(\fmap^-)$ 
such that $\fmap^-\circ\fmap^+$ is multiplication by 
$\law^{m}$  and $\fmap^+\circ\fmap^-$ is chain homotopic 
to multiplication by $\law^{m}$.
The discussion of the previous section, and in particular 
Theorem~\ref{thm:cobordisim-maps} and 
Proposition~\ref{prop:shifts}, imply the following theorem.

\begin{thm}\label{thm:unknotting-number}
Given a pair of knots $K,K'\subset S^3$, 
$u^\bullet(K,K')$ is bounded below by $\depth^\bullet(K,K')$
for $\bullet\in\{-,+\}$, while $u(K,K')$  is bounded below by 
$\depth(K,K')$. 
\end{thm}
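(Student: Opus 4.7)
The plan is to proceed by induction on the length of an unknotting sequence, concatenating the one-crossing maps from Theorem~\ref{thm:cobordisim-maps} and tracking their Alexander degrees via Proposition~\ref{prop:shifts}. Fix a sequence $\Ibb$ of crossing changes with $K=K_0,K_1,\ldots,K_n=K'$ where $n=|\Ibb|$. For each $i$, Theorem~\ref{thm:cobordisim-maps} yields chain maps
\[
\fmap_i^+:\CFT(K_i)\to \CFT(K_{i+1})\quad\text{and}\quad
\fmap_i^-:\CFT(K_{i+1})\to \CFT(K_i)
\]
such that $\fmap_i^-\circ\fmap_i^+\simeq \law\cdot\Id$ and $\fmap_i^+\circ\fmap_i^-\simeq \law\cdot\Id$. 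Proposition~\ref{prop:shifts} tells us $\deg(\fmap_i^+)=1$ and $\deg(\fmap_i^-)=0$ if the $i$-th crossing change is positive, while $\deg(\fmap_i^+)=0$ and $\deg(\fmap_i^-)=1$ if it is negative.

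Now I set
\[
F^+=\fmap_{n-1}^+\circ\cdots\circ\fmap_0^+\quad\text{and}\quad
F^-=\fmap_0^-\circ\cdots\circ\fmap_{n-1}^-.
\]
These are homogeneous chain maps with $\deg(F^+)=m^+(\Ibb)$ and $\deg(F^-)=m^-(\Ibb)$, so $\deg(F^+)+\deg(F^-)=|\Ibb|$. The key point is that their compositions are chain homotopic to multiplication by $\law^n$. This is a straightforward telescoping argument: in $F^-\circ F^+$ the innermost pair $\fmap_{n-1}^-\circ\fmap_{n-1}^+$ is chain homotopic to $\law\cdot\Id$, and because multiplication by $\law$ commutes with every chain map, we may pull $\law$ outside and obtain
\[
F^-\circ F^+\simeq \law\cdot\bigl(\fmap_0^-\circ\cdots\circ\fmap_{n-2}^-\circ\fmap_{n-2}^+\circ\cdots\circ\fmap_0^+\bigr),
\]
and iterating gives $F^-\circ F^+\simeq \law^n\cdot\Id$. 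The same argument applied to $F^+\circ F^-$ (peeling off the outermost pair at each stage, or equivalently applying the induction to the reversed sequence) yields $F^+\circ F^-\simeq \law^n\cdot\Id$.

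Thus $(F^+,F^-)$ is a pair satisfying the conditions of Definition~\ref{def:crossings} with $m^+=m^+(\Ibb)$, $m^-=m^-(\Ibb)$ and $m=|\Ibb|$. Consequently
\[
\depth^+(K,K')\le m^+(\Ibb),\quad \depth^-(K,K')\le m^-(\Ibb),\quad \depth(K,K')\le |\Ibb|.
\]
Taking the minimum over all sequences $\Ibb$ of crossing changes from $K$ to $K'$ yields the three claimed inequalities, which finishes the proof.

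I do not anticipate any serious obstacle: the homotopy commutes-with-$\law$ step is the only subtlety, and it works cleanly because $\law$ lies in the central coefficient ring $\Abb$, so $\law\cdot\Id$ commutes strictly (not merely up to homotopy) with every $\Abb$-linear map. The degree bookkeeping is a direct application of Proposition~\ref{prop:shifts}. No further case analysis is needed since Theorem~\ref{thm:cobordisim-maps} treats positive and negative crossing changes uniformly, with Proposition~\ref{prop:shifts} discriminating only in the degree.
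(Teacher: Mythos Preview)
Your proof is correct and is exactly the argument the paper has in mind: the paper does not give a separate proof but simply states that the theorem follows from Theorem~\ref{thm:cobordisim-maps} and Proposition~\ref{prop:shifts}, and you have spelled out this implication by composing the single-crossing maps and telescoping the homotopies.
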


\begin{remark}\label{rmk:depthmirror}
Given the knots $K$ and $K'$ in $S^3$, any pair of chain maps 
$\fmap^+$ and $\fmap^-$ satisfying the assumptions of 
Definition~\ref{def:crossings} would induce chain maps 
\[\ovl{\fmap}^-:\CFT(- {K'})\to\CFT(-{K})\quad\text{and}
\quad\ovl{\fmap}^+:\CFT(-{K})\to\CFT(-{K'})\]
of degrees $m^+$ and $m^-$, respectively. Moreover, 
$\ovl{\fmap}^-\circ\ovl{\fmap}^+\simeq \law^m$ and 
$\ovl{\fmap}^+\circ\ovl{\fmap}^-\simeq \law^m$. Thus, 
\[\depth^-(-{K},-{K'})=\depth^+(K,K'),\quad
\depth^+(-{K},-{K'})=\depth^-(K,K')\quad\text{and}\quad \depth(K,K')=\depth(-K,-K').\]\end{remark}

Let us denote the homology of $\CFT(K,s)$ by 
$\Hbb(K,s)$ for every $s\in\Z$, and set 
$\Hbb(K)=\bigoplus_s \Hbb(K,s)$. Then 
$\Hbb(K)$ is a module over $\Abb=\Fbb[\la,\law]$. Let 
$\Tbb(K)$ denote the torsion submodule of $\Hbb(K)$, i.e.
\[\Tbb(K)=\{x\in \Hbb(K)\ |\ \exists a\in \Abb-\{0\}\ s.t.\ 
ax=0\}.\]
It is clear that $\Tbb(K)$ is a sub-module of $\Hbb(K)$, and 
there is a short exact sequence
\begin{align*}
\begin{diagram}
0&\rTo{}&\Tbb(K)&\rTo{\imath_K}&\Hbb(K)&\rTo{\pi_K}&
\Abb(K)&\rTo{}&0,
\end{diagram}
\end{align*} 
where $\Abb(K)$, defined by the above exact sequence, 
is the torsion-free part of 
$\Hbb(K)$. Fix a sequence $\Ibb$ of crossing changes which 
modify $K$ to the unknot. Correspondingly, we obtain the 
$\Abb$-homomorphisms $\fmap_\Ibb^+:\Hbb(K)\ra \Abb$ and 
$\fmap_\Ibb^-:\Abb\ra \Hbb(K)$. 
The map $\fmap^+_\Ibb$ induces a map 
$\fmap^+_{\Ibb,\Tbb}:\Tbb(K)\ra \Abb$, while $\fmap^-_\Ibb$
induces the map $\fmap^-:\Abb\ra \Abb(K)$.
 
\begin{lem}\label{lem:injectivity}
The map $\fmap^-:\Abb\ra \Abb(K)$ induced by 
$\fmap^-_\Ibb$ is injective, while 
the map $\fmap^+_{\Ibb,\Tbb}:\Tbb(K)\ra \Abb$ is trivial. We thus
have a map $\fmap^+:\Abb(K)\ra \Abb$ induced by $\fmap^+_\Ibb$, 
which is injective. The induced maps are homogeneous with 
respect to the Alexander grading.\end{lem}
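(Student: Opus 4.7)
The plan is to reduce everything to two facts: (i) $\Abb = \F[\la,\law]$ is an integral domain, and (ii) on homology, $\fmap^+_\Ibb \circ \fmap^-_\Ibb$ and $\fmap^-_\Ibb \circ \fmap^+_\Ibb$ are both multiplication by $\law^m$ for $m = |\Ibb|$, which follows by iterating the last assertion of Theorem~\ref{thm:cobordisim-maps} at each crossing change in $\Ibb$ and passing to homology. Given (i) and (ii), all four claims fall out by short diagram chases.

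First I would dispose of the triviality of $\fmap^+_{\Ibb,\Tbb}$, since this is precisely what produces the well-defined map $\fmap^+$ in the first place. For $x \in \Tbb(K)$ pick a nonzero $b \in \Abb$ with $bx = 0$; then $b \cdot \fmap^+_\Ibb(x) = \fmap^+_\Ibb(bx) = 0$ in $\Abb$, and (i) forces $\fmap^+_\Ibb(x) = 0$. Consequently $\fmap^+_\Ibb$ factors through $\pi_K$, yielding $\fmap^+ \colon \Abb(K) \to \Abb$.

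Next I would verify the two injectivity statements. For $\fmap^-$: if $\fmap^-(a) = 0$, then $\fmap^-_\Ibb(a) \in \Tbb(K)$, so is annihilated by some nonzero $b$; applying $\fmap^+_\Ibb$ and invoking (ii) gives $b \law^m a = 0$ in $\Abb$, whence $a = 0$ by (i). For $\fmap^+$: if $\fmap^+([y]) = 0$ with representative $y \in \Hbb(K)$, then $\fmap^+_\Ibb(y) = 0$, and applying $\fmap^-_\Ibb$ together with (ii) yields $\law^m y = 0$ in $\Hbb(K)$, so $y \in \Tbb(K)$ and $[y] = 0$. Homogeneity of all four induced maps is inherited from homogeneity of the crossing-change chain maps established in Proposition~\ref{prop:shifts}, which passes to the graded submodule $\Tbb(K)$ and to the quotient $\Abb(K)$ without change.

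I do not anticipate any substantive obstacle here: the whole lemma is formal once the $\law^m$-bookkeeping is in place. The only point deserving attention is the ordering of the arguments, since one cannot meaningfully ask about injectivity of $\fmap^+$ until the triviality of $\fmap^+_{\Ibb,\Tbb}$ has been established and the factorization through $\Abb(K)$ performed.
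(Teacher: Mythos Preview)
Your proposal is correct and follows essentially the same route as the paper: first kill $\fmap^+_\Ibb$ on torsion via the integral-domain property, then use the two $\law^m$ identities to force injectivity of the induced maps, with homogeneity inherited from Proposition~\ref{prop:shifts}. One small streamlining: in your argument for $\fmap^-$ you need not re-pick an annihilator $b$, since you have already shown $\fmap^+_\Ibb$ vanishes on $\Tbb(K)$, so $\fmap^-_\Ibb(a)\in\Tbb(K)$ gives $\law^m a=\fmap^+_\Ibb(\fmap^-_\Ibb(a))=0$ directly---this is exactly how the paper phrases it.
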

\begin{proof}
Let $m^+=\deg(\fmap^+_\Ibb)$ and $m^-=\deg(\fmap^-_\Ibb)$.
If $x\in\Tbb(K)$ and $ax=0$ for $0\neq a\in\Abb$, it follows 
that $a\fmap^+_\Ibb(x)=0$ in $\Abb$, implying that 
$\fmap^+_{\Ibb}(x)=0$. Since the restriction $\fmap^+_{\Ibb,\Tbb}$
of $\fmap^+_{\Ibb}$ to $\Tbb(K)$ is trivial, a
map $\fmap^+:\Abb(K)\ra \Abb$ is induced by 
$\fmap_\Ibb^+$. Let us now assume that 
$x\in\Hbb(K)$ is in the kernel of $\fmap^+_\Ibb$. Then 
$\law^{m}x=\fmap^-_\Ibb\circ\fmap^+_\Ibb(x)=0$, 
implying that $x\in\Tbb(K)$. In particular, 
$\fmap^+:\Abb(K)\ra \Abb$ is injective. 
On the other hand, if $a\in \Abb$ and 
$x=\fmap^-_\Ibb(a)\in\Tbb(K)$,
it follows that $0=\fmap^+_\Ibb(x)
=\law^{m}a$, implying that 
$a=0$. Thus $\fmap^-:\Abb\ra \Abb(K)$ is injective. This completes 
the proof of the lemma, as the statement about the Alexander 
grading follows immediately from our previous discussions.
\end{proof}
\begin{prop}\label{prop:structure-of-dent}
There is a sequence $0=i_0(K)<i_1(K)<\cdots<i_{n}(K)=\nu^-(K)$
associated with every knot $K\subset S^3$, and an identification
\begin{equation}\label{eq:dent-identification}
\Abb(K)
=\left\langle\la^{i_k(K)}\law^{i_{n-k}(K)}\ |\ 
k\in\{0,1,\ldots,n\}
\right\rangle_\Abb.
\end{equation}   
Moreover, the identification of Equation~\ref{eq:dent-identification} 
preserves the Alexander grading.
\end{prop}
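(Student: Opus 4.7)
The plan is to realize $\Abb(K)$ as a bi-homogeneous ideal in $\Abb = \Fbb[\la,\law]$, classify such ideals as monomial ideals (i.e.\ staircases), and use the knot Floer conjugation symmetry to force the staircase to be symmetric with generators $\la^{i_k}\law^{i_{n-k}}$.

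First, I would fix any unknotting sequence $\Ibb$ for $K$. By Lemma~\ref{lem:injectivity}, the induced map $\fmap^+:\Abb(K)\to\Abb$ is an injection of $\Abb$-modules, and by Proposition~\ref{prop:shifts} it is bi-homogeneous (preserving Maslov, and shifting Alexander by $u^+(\Ibb)$). Thus $\Abb(K)$ is isomorphic, as a bi-graded $\Abb$-module (after an overall Alexander shift), to a bi-graded ideal $\mathcal{I}(K)\subseteq\Abb$. A bi-homogeneous element of $\Abb$ is necessarily a monomial $\la^a\law^b$, since the pair $(\mu,A)=(-2a,\,b-a)$ recovers $(a,b)$. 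Hence $\mathcal{I}(K)$ is a monomial ideal and admits a unique minimal generating set $\{\la^{a_k}\law^{b_k}\}_{k=0}^n$ with $a_0<\cdots<a_n$ and $b_0>\cdots>b_n$, arranged in a staircase.

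Next, I would normalize by setting $a_0=0$ (achieved by choosing the Alexander shift so the smallest $\la$-exponent is zero) and define $i_k:=a_k$. The knot Floer conjugation symmetry yields an involution of $\Hbb(K)$ that swaps $\la\leftrightarrow\law$ and negates the Alexander grading. Descending to $\Abb(K)$ and to the embedded ideal $\mathcal{I}(K)$, this forces $\mathcal{I}(K)$ to be invariant under swapping $\la$ and $\law$. Uniqueness of the minimal generating set then gives $b_k=a_{n-k}=i_{n-k}$, yielding the symmetric form $\la^{i_k}\law^{i_{n-k}}$; the Alexander grading of this generator is $i_{n-k}-i_k$, matching the intrinsic grading on $\Abb(K)$ automatically.

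Finally, I would identify $i_n=\nu^-(K)$ using the characterization of $\nu^-$ as the maximal Alexander grading attained by a minimal generator of the torsion-free part of the knot Floer complex: the generator $\law^{i_n}$ realizes this maximum. The main obstacle will be the middle step: transferring the conjugation symmetry on $\Hbb(K)$ to an honest involution of $\mathcal{I}(K)\subseteq\Abb$ requires care, since the embedding $\fmap^+$ depends on the chosen unknotting sequence. Independence of the resulting staircase from that choice should follow by comparing two embeddings $\fmap_1^+$ and $\fmap_2^+$ via the identity $\fmap^-\circ\fmap^+\simeq\law^m$, so that the two ideals they produce differ only by an overall power of $\law$ that cancels under normalization.
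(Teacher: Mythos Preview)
Your approach differs from the paper's in a key respect: you construct the embedding $\Abb(K)\hookrightarrow\Abb$ \emph{extrinsically}, via the cobordism map $\fmap^+$ of Lemma~\ref{lem:injectivity} coming from an unknotting sequence, whereas the paper constructs it \emph{intrinsically} from the Alexander filtration on the knot complex. Concretely, the paper identifies $\Hbb(K,s)\cong H_*\big(C\{\max(i,j-s)\le 0\}\big)$, uses the inclusion-induced maps $h_s$ into $H_*(C\{i\le 0\})\cong\Fbb[U]$, observes that $\Tbb(K,s)=\ker h_s$, and reads off the image $U^{j_s}\Fbb[U]$ of $\Abb(K,s)$ directly. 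The embedding $\imath(b)=\la^{j_s}\law^{j_s+s}$ is then canonical and Alexander-preserving by construction, and the identification $i_n=\nu^-(K)$ is immediate because $\nu^-$ is \emph{defined} as the least $s$ with $h_s$ surjective (i.e.\ $j_s=0$). The symmetry $j_k'=j_{-k}$ then comes for free from the $\la\leftrightarrow\law$ symmetry of $\Hbb(K)$, with no dependence on auxiliary choices.

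Your route is plausible, but two steps need more than you have written. First, the normalization is confused: an Alexander shift does not change the $\la$-exponents $a_k$, so ``choosing the Alexander shift so that $a_0=0$'' is not meaningful. In fact $a_0=0$ holds automatically (since $\law^m=\fmap^+\circ\fmap^-(1)\in\mathcal I(K)$), but to obtain an Alexander-\emph{preserving} identification you must divide by $\law^{m^+}$, and for this to land in $\Abb$ you need $b_n\ge m^+$; establishing that already requires the Maslov grading together with the conjugation symmetry on $\Abb(K)$, so the normalization and symmetry steps are entangled rather than sequential. (Before normalizing, the involution transported via $\fmap^+$ is \emph{not} the $\la\leftrightarrow\law$ swap on $\mathcal I(K)$, since the two differ by a shift of $m^+$ in Alexander.) Second, your identification $i_n=\nu^-(K)$ invokes a ``characterization of $\nu^-$ as the maximal Alexander grading of a minimal generator of the torsion-free part'', but that is precisely the content being proved, not an available input; to actually tie $i_n$ to $\nu^-$ you must relate your staircase back to the maps $h_s$, which is the paper's construction. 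So while the cobordism-map embedding gives the monomial-ideal form quickly, the intrinsic filtration argument is still needed at the end.
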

\begin{proof}
If we set $\law=1$ and consider $\CFT(K)$ as a chain complex filtered 
by the Alexander filtration, we obtain an identification
\begin{equation}\label{eq:identification}
\Hbb(K,s)=H_{\star}\left(C\{\max(i,j-s)\le 0\}\right).
\end{equation}
Under this identification, 
the homomorphism induced by inclusion
\[v_{s}:H_{\star}\left(C\{\max(i,j-s)\le 0\}\right)
\to H_{\star}\left (C\{\max(i,j-s-1)\le 0\}\right)\]
is equal to multiplication by $\law$. Recall that $\nu^-=\nu^-(K)$ 
is the smallest $s$ such that the map
\[h_s:H_{\star}\left(C\{\max(i,j-s)\le 0\}\right)
\to H_{\star}\left (C\{i\le 0\}\right)=\Fbb[U]\]
induced by inclusion is surjective, where $U=\la\law$.  
It is clear that for all $s$, 
under the identification of Equation \ref{eq:identification}, 
$\Tbb(K,s)$ is equal to the kernel of $h_s$ and so the restriction 
of $h_s$ to $\Abb(K,s)\cong \Fbb[U]$ is injective.  
Furthermore, for all $s\ge \nu^-$, multiplication by $\law$ is an 
isomorphism from $\Abb(K,s)$ to $\Abb(K,s+1)$.   
Let $a$ denote the generator of $\Abb(K,\nu^-)$ i.e. 
$h_{\nu^-}(a)=1$. The above observations imply that for any 
$b\in \Abb(K,s)$ 
\begin{equation}\label{eq:nontorsionelem}
\begin{cases}
\begin{array}{ll}
b=\law^{s-\nu^-}p_b(U)a&\text{if}\quad s\ge \nu^-\\
\law^{\nu^--s}b=p_b(U)a&\text{if}\quad s<\nu^-
\end{array}
\end{cases}
\end{equation}
for some polynomial $p_b\in\Fbb[U]$. If 
$b\in \Abb(K,s)$ is the generator,  $p_b(U)=1$ for 
$s\ge \nu^-$. Suppose now that $s<\nu^-$ and $b$ is the 
generator of $\Abb(K,s)$ as before. 
Since $\la^{\nu^--s}a\in \Abb(K,s)$, we have 
$\la^{\nu^--s}a=p'(U)b$, and so $U^{\nu^--s}a=p_b(U)p'(U)a$. 
Therefore, $p_b(U)p'(U)=U^{\nu^--s}$ and $p_b(U)=U^{j_s}$ for 
some $0\le j_s\le \nu^--s$. 

Additionally, $\Hbb(K)$ is symmetric under exchanging the variables 
$\la$ and $\law$, so for all $s\le -\nu^-$ multiplication by $\la$ 
is an isomorphism from $\Abb(K,s)$ to $\Abb(K,s-1)$. Let 
$a'\in\Abb(K,-\nu^-)$ denote the generator. It is straightforward that, 
\[\la^{2\nu^-}a=U^{\nu^-}a'\quad \text{and}\quad 
\law^{2\nu^-}a'=U^{\nu^-}a.\]
Further, for any generator $b\in \Abb(K,s)$ with 
$-\nu^- \le s\le \nu^-$ we have $\la^{\nu^-+s}b=U^{j'_s}a'$ 
where $j'_s=j_s+s$.  

Then, we define a grading preserving $\Abb$-module homomorphism  
\[\imath:\Abb(K)=\oplus_s\Abb(K,s)\to \Abb\]
by setting $\imath(b)=\la^{j_s}\law^{j_s'}$ for the generator 
$b\in \Abb(K,s)$. For instance, if $s\ge \nu^-$ is non-negative
then $\imath(b)=\law^{s}$, while if $s\le -\nu^-$ is non-positive
then $\imath(b)=\la^{-s}$. It is clear that $\imath$ is injective 
and it identifies $\Abb(K)$ with an ideal generated by at most 
$2\nu^-+1$ monomial of the form $\la^{i}\law^j$ with 
$0\le i,j\le \nu^-$ in $\Abb$. This set of generators contains 
a unique minimal subset 
\[\{\la^{i_k}\law^{j_k}\ |\ 0=i_0<i_1<...<i_n=\nu^-\ 
\text{and}\ \nu^-=j_0>j_1>...>j_n=0\}\]
that generates the image of $\imath$. The symmetry of 
$\Hbb(K)$ implies that $j_k=i_{n-k}$ for all $k=0,...,n$.
\end{proof}

\begin{defn}\label{defn:dent-depth}
Under the identification of Equation~\ref{eq:dent-identification},
for every knot $K\subset S^3$ the sequence 
\[\imath(K)=(0=i_0(K)<i_1(K)<\cdots<i_{n(K)}(K)=\nu^-(K))\] 
is called the {\emph{ideal sequence}} associated with the knot $K$.
The ideal $\Abb(\imath)$ associated with a sequence 
$\imath=(0=i_0<i_1<\cdots<i_n)$ is defined as
\[\Abb(\imath)
=\left\langle\la^{i_k}\law^{i_{n-k}}\ |\ 
k\in\{0,1,\ldots,n\}
\right\rangle_\Abb,\]
and we identify $\Abb(K)=\Abb(\imath(K))$. For finite increasing 
sequences $\imath,\imath'$ of non-negative integers as above
define the {\emph{distance}} $\ell(\imath,\imath')$ from $\imath$ 
to $\imath'$ as the smallest value for $p$ such that 
$\law^p\Abb(\imath')\subset \Abb(\imath)$. Given the knots 
$K,K'\subset S^3$, define the {\emph{negative distance}} 
$\ell^-(K,K')$ as $\ell(\imath(K),\imath(K'))$. Define the 
{\emph{positive distance}} by $\ell^+(K,K')=\ell^-(-{K},-{K'})$, 
where $-{K}$ denotes the mirror image of $K$.
Define the positive/negative {\emph{depth}} of a knot 
$K$ by $\ell^\pm(K)=\ell^\pm(K,U)$, where $U$ denotes the unknot.
\end{defn} 
Note that under the identification of 
Equation~\ref{eq:dent-identification},
the negative depth of $K$ is equal to $\nu^-(K)$.

\begin{prop}\label{prop:dent-depth}
Let $K$ and $K'$ be  knots in $S^3$. Then 
\[\depth^+(K,K')\geq \max\{\ell^+(K,K'),\ell^-(K',K)\}\quad\text{and}
\quad \depth^-(K,K')\geq \max\{\ell^-(K,K'),\ell^+(K',K)\}.\]  
\end{prop}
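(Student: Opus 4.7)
The plan is to reduce the four inequalities to a single statement, then pass to induced $\Abb$-module maps on the torsion-free quotients of $\Hbb$. Swapping the roles of $\fmap^+$ and $\fmap^-$ in Definition~\ref{def:crossings} gives $\depth^-(K,K') = \depth^+(K',K)$; combined with the identities $\depth^{\pm}(-K,-K') = \depth^{\mp}(K,K')$ from Remark~\ref{rmk:depthmirror} and $\ell^+(X,Y) = \ell^-(-X,-Y)$ from Definition~\ref{defn:dent-depth}, this reduces all four claims to the single inequality
\[
\depth^-(X,Y) \geq \ell^-(X,Y),
\]
applied to the four pairs $(K,K')$, $(-K,-K')$, $(K',K)$ and $(-K',-K)$.

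To prove this core inequality, fix homogeneous chain maps $\fmap^\pm$ realizing $\depth^-(K,K')$, set $m^- = \deg(\fmap^-)$ and $m = m^- + \deg(\fmap^+)$, and pass to homology. The resulting $\Abb$-module homomorphisms $\fmap^\pm_*$ preserve torsion submodules (if $ax=0$ then $a\fmap^\pm_*(x)=0$) and hence descend to homogeneous $\Abb$-linear maps $\fmap^\pm_\Abb$ between the torsion-free quotients $\Abb(K)$ and $\Abb(K')$, with both $\fmap^+_\Abb\circ\fmap^-_\Abb$ and $\fmap^-_\Abb\circ\fmap^+_\Abb$ equal to multiplication by $\law^m$.

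Under the identification of Proposition~\ref{prop:structure-of-dent}, $\Abb(K)$ and $\Abb(K')$ are nonzero ideals in the integral domain $\Abb$, and any $\Abb$-linear map between two nonzero ideals of an integral domain is multiplication by a fixed element of the fraction field $\mathrm{Frac}(\Abb)$. Thus $\fmap^-_\Abb$ is multiplication by some homogeneous $\beta \in \mathrm{Frac}(\Abb)$ of Alexander degree $m^-$, while $\fmap^+_\Abb$ is multiplication by some homogeneous $\alpha$ of degree $m - m^-$, with $\alpha\beta = \law^m$. Writing $\beta = \law^{m^-} v$ and $\alpha = \law^{m-m^-} u$ with $u,v$ lying in the degree-$0$ homogeneous part $\F(U) \subset \mathrm{Frac}(\Abb)$ and satisfying $uv=1$, the proof reduces to showing that $u$ and $v$ are units in $\F[U]$.

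This is the main obstacle. Since $\law^{\nu^-(K')}$ generates $\Abb(K',\nu^-(K'))$ by the proof of Proposition~\ref{prop:structure-of-dent}, the element $v\law^{m^-+\nu^-(K')} = \beta\law^{\nu^-(K')}$ must lie in $\Abb(K) \subseteq \F[\la,\law]$. Writing $v = P(U)/Q(U)$ in lowest terms over $\F[U]$ and observing that multiplication by a pure power of $\law$ cannot absorb any factor of $\la$ arising from $Q(\la\law)$ in $\F[\la,\law]$ forces $Q$ to be a unit, so $v \in \F[U]$. The symmetric argument with $\law^{\nu^-(K)} \in \Abb(K)$ gives $u \in \F[U]$, whence $uv=1$ forces both to be units. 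Therefore $\beta$ is a unit multiple of $\law^{m^-}$, yielding $\law^{m^-}\Abb(K') \subseteq \Abb(K)$ and $\ell^-(K,K') \leq m^- = \depth^-(K,K')$.
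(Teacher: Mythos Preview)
Your reduction to the single inequality $\depth^-(K,K')\geq\ell^-(K,K')$ via the symmetries $\depth^-(K,K')=\depth^+(K',K)$ and $\depth^\pm(-K,-K')=\depth^\mp(K,K')$ is exactly the paper's approach, and your passage to induced maps on the torsion-free quotients supplies more detail than the paper, which simply asserts that the induced maps $f,g$ are multiplication by polynomials $p,q\in\Abb$ and then uses $pq=\law^m$.

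One step is under-argued: the claim that $Q$ must be a unit. Your phrasing (``cannot absorb any factor of $\la$ arising from $Q(\la\law)$'') only rules out the case $U\mid Q$, since that is the only way $\la$ appears as an actual \emph{factor} of $Q(\la\law)$; if $Q(0)\neq 0$ but $\deg Q\geq 1$ (say $Q=U+1$), then $Q(\la\law)$ has no factor of $\la$ and the sentence as written does not apply. The fix is short: since $\gcd(P,Q)=1$ in $\F[U]$, a B\'ezout identity $AP+BQ=1$ lifts verbatim to $\F[\la,\law]$, so $\gcd(P(\la\law),Q(\la\law))=1$ there and hence $Q(\la\law)\mid\law^N$; but the only monomials in $Q(\la\law)$ are of the form $\la^i\law^i$, so $Q$ is constant. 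Alternatively, and more in the spirit of the paper's assertion, note that $\Abb(K')$ contains both $\law^{\nu^-(K')}$ and $\la^{\nu^-(K')}$; the denominator of $\beta$ (in lowest terms over $\Abb$) must divide every element of $\Abb(K')$, hence divides $\gcd(\law^{\nu^-(K')},\la^{\nu^-(K')})=1$, giving $\beta\in\Abb$ directly. Then $\alpha\beta=\law^m$ in the UFD $\Abb$ forces $\beta=c\,\law^{m^-}$, and the conclusion follows.
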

\begin{proof}
It is straightforward corollary of the definition, that 
$\depth^-(K,K')=\depth^+(K',K)$. So, remark \ref{rmk:depthmirror} 
implies that it suffices to show that 
$\depth^-(K,K')\ge \ell^-(K,K')$.  By definition, there exists 
$\Abb$-homomorphisms
\[f:\Abb(K)\to\Abb(K')\quad\text{and}\quad g:\Abb(K')\to\Abb(K)\]
such that $f\circ g$ and $g\circ f$ are equal to multiplication by 
$\law^m$, and $\deg (g)=\depth^-(K,K')$. Under the identification of 
Equation \ref{eq:dent-identification}, it is easy to check that $f$ 
and $g$ are the restriction of $\Abb$-homomorphisms from $\Abb$ to 
$\Abb$ defined by multiplication with polynomials $p$ and $q$ in 
$\Abb$. Since, $pq=\law^m$ and $\deg (g)=\depth^-(K,K')$, we have 
$g=\law^{\depth^-(K,K')}$ and so $\depth^-(K,K')\ge \ell^-(K,K')$. 
\end{proof}

Theorem \ref{thm:unknotting-number} and the above proposition imply 
that $\ell^\pm(K,K')\le u^\pm(K,K')$.

\begin{cor}\label{cor:bounds}
For any knot $K\subset S^3$, we have
\[u^-(K)\ge\depth^-(K)\ge\nu^-(K)\ge \tau(K)\quad\text{and}
\quad u^+(K)\ge\depth^+(K)\ge \nu^-(-{K})\ge -\tau(K).\]
Therefore, for $0\le t\le 1$, we have $-t\depth^-(K)\le 
\Upsilon_K(t)\le t\depth^+(K)$.
\end{cor}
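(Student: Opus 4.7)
The plan is to assemble the chain of inequalities one link at a time, with each step being a direct application of results already established in the paper (Theorem~\ref{thm:unknotting-number} and Propositions~\ref{prop:dent-depth} and \ref{prop:structure-of-dent}) together with a few standard facts about the invariants $\nu^-$, $\tau$ and $\Upsilon$.

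First I would handle the outermost inequality $u^-(K)\ge\depth^-(K)$ (and symmetrically $u^+(K)\ge\depth^+(K)$) by specializing Theorem~\ref{thm:unknotting-number} to the case $K'=U$ and using that $u^\bullet(K)=u^\bullet(K,U)$ by definition. Next I would address $\depth^-(K)\ge \nu^-(K)$. By Proposition~\ref{prop:dent-depth} applied with $K'=U$, it suffices to show $\ell^-(K,U)=\nu^-(K)$. Since $\Abb(U)=\Abb$, the quantity $\ell^-(K,U)$ is the smallest non-negative integer $p$ with $\law^p\Abb\subset\Abb(K)$; the ideal description from Proposition~\ref{prop:structure-of-dent} gives $\Abb(K)=\langle\la^{i_k}\law^{i_{n-k}}\rangle$, so the smallest monomial $\law^p$ lying in $\Abb(K)$ corresponds to $k=0$, yielding $p=i_n(K)=\nu^-(K)$.

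For the companion statement $\depth^+(K)\ge \nu^-(-K)$, I would invoke Remark~\ref{rmk:depthmirror} to identify $\depth^+(K)=\depth^+(K,U)=\depth^-(-K,-U)=\depth^-(-K)$, and then apply the bound just proved to the mirror knot. The remaining inequalities $\nu^-(K)\ge\tau(K)$ and $\nu^-(-K)\ge-\tau(K)$ are standard: $\tau$ is characterized as the smallest Alexander grading $s$ for which the inclusion-induced map $H_*(C\{i\le 0, j\le s\})\to H_*(C\{i\le 0\})=\Fbb[U]$ is non-trivial, while $\nu^-$ requires surjectivity of this same map, so $\nu^-\ge\tau$ is immediate; the mirror version follows from the symmetry $\tau(-K)=-\tau(K)$.

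Finally, for the $\Upsilon$ statement I would appeal to the known comparison between $\Upsilon_K(t)$ and $\nu^-$ (due to Ozsv\'ath--Stipsicz--Szab\'o and Livingston): for all $t\in[0,1]$ one has $\Upsilon_K(t)\ge -t\nu^-(K)$. Combining this with $\depth^-(K)\ge\nu^-(K)$ gives the lower bound $\Upsilon_K(t)\ge-t\depth^-(K)$. The upper bound then follows by applying the same inequality to $-K$ and using $\Upsilon_{-K}(t)=-\Upsilon_K(t)$ together with $\depth^+(K)\ge\nu^-(-K)$. No step is really an obstacle; the only point requiring some care is the identification $\ell^-(K,U)=\nu^-(K)$, which must be read off correctly from the explicit generating set for $\Abb(K)$ provided by Proposition~\ref{prop:structure-of-dent}.
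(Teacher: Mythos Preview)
Your proposal is correct and follows essentially the same route as the paper's proof, only spelled out in more detail: the paper simply cites Proposition~\ref{prop:dent-depth} (together with the remark after Definition~\ref{defn:dent-depth} that $\ell^-(K)=\nu^-(K)$), the Hom--Wu inequality $\nu^-(K)\ge\tau(K)$, and the Ozsv\'ath--Stipsicz--Szab\'o inequality $-t\nu^-(K)\le\Upsilon_K(t)$, leaving the $u^\bullet\ge\depth^\bullet$ step and the mirror symmetry implicit. One small caveat: your sketch of why $\nu^-\ge\tau$ describes $\tau$ via the map into $\Fbb[U]=HF^-(S^3)$ rather than the usual $\widehat{HF}(S^3)$ characterization; since the inequality is in any case a standard cited fact, this does not affect the argument.
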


\begin{proof}
The first two claims follow from Proposition \ref{prop:dent-depth} 
and the inequality $\nu^-(K)\ge\tau(K)$ from 
\cite[Proposition 2.3]{Hom-Wu:tau}. The last claim follows from the 
inequality $-t\nu^-(K)\le \Upsilon_K(t)$ from \cite[Proposition 4.7]
{OSS-upsilon}.
\end{proof}

\section{The torsion obstruction}\label{sec:torsion}
Let us assume that a sequence $\Ibb$ of crossing changes 
is used to unknot $K\subset S^3$. Let us further 
assume that $m^+=m^+(\Ibb)$ and $m^-=m^-(\Ibb)$,
while $m=m^++m^-=|\Ibb|$. 
The argument of Lemma~\ref{lem:injectivity} then implies 
that multiplication by $\law^m$ trivializes all of 
$\Tbb(K)$. This observation gives 
a weaker obstruction to the unknotting number. 
\begin{defn}
Define the {\emph{negative torsion depth}} $\Tt^-(K)$ of a knot 
$K\subset S^3$ to be the smallest integer $m$ such that 
multiplication by $\law^m$ is trivial on $\Tbb(K)$. Let 
$\Tt^+(K)=\Tt^-(-{K})$. Then $\Tt(K)=\max\{\Tt^-(K),\Tt^+(K)\}$
is called the {\emph{torsion depth}} of $K$.
\end{defn}
Consider the homomorphism 
$\widehat{\phi}:\Abb\to\Fbb[\law]$ defined by $\widehat{\phi}(\la)=0$ 
and  $\widehat{\phi}(\law)=\law$. This homomorphism makes 
$\Fbb[\law]$ into an $\Abb$-module. We define
\[\Ehat(K)=\CFT(K)\otimes_{\widehat{\phi}}\Fbb[\law]\quad\text{and}\quad
\widehat{\Hbb}(K)=H_{\star}(\Ehat(K)).\]
Then $\widehat{\Hbb}(K)$ is a $\Fbb[\law]$-module, with a 
free summand isomorphic to $\Fbb[\law]$ and a torsion summand 
denoted by $\widehat{\Tbb}(K)$. Define $\widehat{\Tt}(K)$ as
the smallest $m$ such that multiplication by $\law^m$ is trivial 
on $\widehat{\Tbb}(K)$.  The following proposition is a 
straightforward corollary of previous definitions and  
discussions.

\begin{prop}\label{prop:torsion-bound} 
For any knot $K\subset S^3$, the torsion classes
$\widehat{\Tt}(K)$, $\widehat{\Tt}(-{K})$, and $\Tt(K)$
are lower bounds for $\depth(K)$,
and thus for the unknotting number $u(K)$.
\end{prop}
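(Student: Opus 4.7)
The plan is to imitate the argument of Lemma~\ref{lem:injectivity}, now applied directly to the pair of chain maps realizing $\depth(K)$, and then to push the identical argument through the ring homomorphism $\widehat\phi:\Abb\to\Fbb[\law]$ in order to get the sharper $\widehat{\Tt}$ bound.

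First I would handle $\Tt(K)$. Let $m=\depth(K)$ and fix homogeneous chain maps $\fmap^+:\CFT(K)\to\Abb$ and $\fmap^-:\Abb\to\CFT(K)$ with $\fmap^-\circ\fmap^+\simeq\law^m$ and $\fmap^+\circ\fmap^-\simeq\law^m$, as supplied by Definition~\ref{def:crossings}. Pass to homology and write $\fmap^\pm_*$ for the induced $\Abb$-module maps between $\Hbb(K)$ and $\Abb$. For any $x\in\Tbb(K)$ the element $\fmap^+_*(x)$ is torsion in $\Abb$, hence zero since $\Abb=\Fbb[\la,\law]$ is a domain. Therefore
\[\law^m\cdot x=\fmap^-_*\fmap^+_*(x)=0,\]
so $\law^m$ annihilates $\Tbb(K)$ and $m\geq\Tt^-(K)$. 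By Remark~\ref{rmk:depthmirror} one has $\depth(K)=\depth(-K)$, and applying the same argument to $-K$ gives $\depth(K)\geq\Tt^-(-K)=\Tt^+(K)$; combining yields $\depth(K)\geq\Tt(K)$.

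Next I would handle $\widehat{\Tt}(K)$ by base change. Since $\fmap^\pm$ and the relevant chain homotopies are $\Abb$-linear, tensoring with $\Fbb[\law]$ via $\widehat\phi$ produces chain maps
\[\widehat{\fmap}^+:\Ehat(K)\to\Fbb[\law]\quad\text{and}\quad\widehat{\fmap}^-:\Fbb[\law]\to\Ehat(K)\]
whose compositions remain chain homotopic to multiplication by $\law^m$ on $\Ehat(K)$ and on $\Fbb[\law]$ respectively. Because $\Fbb[\law]$ is again a domain, the very same torsion argument applies: for any $\widehat{x}\in\widehat{\Tbb}(K)$, one has $\widehat{\fmap}^+_*(\widehat{x})=0$, and hence $\law^m\cdot\widehat{x}=0$. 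This gives $\depth(K)\geq\widehat{\Tt}(K)$. Applying the mirror identity once more yields $\depth(K)\geq\widehat{\Tt}(-K)$. Finally, Theorem~\ref{thm:unknotting-number} gives $u(K)\geq\depth(K)$, so all three quantities are lower bounds for $u(K)$.

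There is no genuine obstacle; the proof is structural. The only point demanding a moment's care is checking that tensoring a chain homotopy over the ring map $\widehat\phi$ yields a chain homotopy, but this is immediate from $\Abb$-linearity of the homotopies in question. The proposition is therefore a direct corollary of the ``killer composition'' $\fmap^-\circ\fmap^+\simeq\law^m$ together with the fact that both $\Abb$ and $\Fbb[\law]$ are integral domains.
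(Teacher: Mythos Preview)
Your proposal is correct and matches the paper's approach: the paper does not write out a proof, stating only that the proposition is ``a straightforward corollary of previous definitions and discussions,'' and the discussion it refers to is precisely the observation (via Lemma~\ref{lem:injectivity}) that the composition $\fmap^-\circ\fmap^+\simeq\law^m$ forces $\law^m$ to annihilate the torsion submodule. You have simply made explicit the passage to $\depth(K)$ (rather than an unknotting sequence), the base change along $\widehat\phi$, and the mirror step from Remark~\ref{rmk:depthmirror}.
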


\begin{prop}\label{prop:torsion-nontrivial}
If the genus $g(K)$ of a knot $K\subset S^3$ is strictly 
bigger than $\tau(K)$ then $\Tbb(K)\neq 0$, and in particular,
$\Tt^-(K)\geq 1$.
\end{prop}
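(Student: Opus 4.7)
The plan is to prove the contrapositive: if $\Tbb(K)=0$, then $g(K)\leq \tau(K)$.

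Suppose $\Tbb(K)=0$; then the short exact sequence preceding Proposition~\ref{prop:structure-of-dent} reduces to $\Hbb(K)\cong \Abb(K)$, which by Proposition~\ref{prop:structure-of-dent} is the ideal generated by the monomials $g_k=\la^{i_k(K)}\law^{i_{n-k}(K)}$ for $k=0,\ldots,n$, with $i_n(K)=\nu^-(K)$.

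The first step is to describe $\widehat{\Hbb}(K)=H_\star(\Ehat(K))$. Since $\la$ acts injectively on $\Abb(K)\subseteq\Abb$, one has $\widehat{\Hbb}(K)\cong \Abb(K)/\la\Abb(K)$, and a direct monomial computation in the ideal produces an $\Fbb[\law]$-module decomposition
\[\widehat{\Hbb}(K) \;\cong\; \Fbb[\law]\cdot g_0\;\oplus\;\bigoplus_{k=1}^{n}\Fbb[\law]/(\law^{a_k})\cdot g_k,\qquad a_k=i_{n-k+1}(K)-i_{n-k}(K),\]
where $g_0=\law^{\nu^-(K)}$ sits in Alexander grading $\nu^-(K)$ and each $g_k$ (for $k\geq 1$) sits in Alexander grading $i_{n-k}(K)-i_k(K)<\nu^-(K)$. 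Since the free summand $\Fbb[\law]\cdot g_0$ is the non-torsion $\Fbb[\law]$-tower of $\widehat{\Hbb}(K)$, and the non-torsion tower of $\HFKm(K)$ is by definition generated in Alexander grading $\tau(K)$, this placement forces $\tau(K)=\nu^-(K)$ under the hypothesis $\Tbb(K)=0$.

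The final step is to apply the universal coefficient theorem with respect to $\Fbb=\Fbb[\law]/(\law)$ to compute $\widehat{\HFK}(K)$. The tower $\Fbb[\law]\cdot g_0$ contributes a single copy of $\Fbb$ in Alexander grading $\nu^-(K)$, while each $\Fbb[\law]/\law^{a_k}\cdot g_k$ contributes copies of $\Fbb$ in Alexander gradings $i_{n-k}(K)-i_k(K)$ and $i_{n-k+1}(K)-i_k(K)$, both strictly below $\nu^-(K)$. Therefore $\widehat{\HFK}(K,s)=0$ for $s>\nu^-(K)$, and by Ozsv\'ath--Szab\'o's genus-detection theorem $g(K)\leq \nu^-(K)=\tau(K)$, contradicting the assumption $g(K)>\tau(K)$; the second conclusion $\Tt^-(K)\geq 1$ follows at once from $\Tbb(K)\neq 0$.

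The main subtlety will be the identification of the $\Fbb[\law]$-tower of the paper's $\widehat{\Hbb}(K)$ with the standard non-torsion tower of $\HFKm(K)$ that defines $\tau(K)$; once this conventions check is carried out, the remaining Alexander-grading bounds reduce to explicit bookkeeping with the ideal sequence $\imath(K)$.
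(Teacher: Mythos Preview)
Your argument is correct, but it follows a genuinely different route from the paper's proof.  The paper argues \emph{directly}: after passing to the $E_1$-page so that $d^{0,0}=0$, it picks a generator $x\in\widehat{\HFK}(K,g(K))$, uses the vanishing of $\widehat{\HFK}$ above $g(K)$ to write $d(x)=\law^p z$, and then shows (via the hypothesis $\tau(K)<g(K)$ and the identification $\widehat{\Hbb}(K,s)\cong H_*(\mathcal F_s)$) that $[z]$ cannot be a boundary, producing an explicit nonzero torsion class.  By contrast, you run the contrapositive and work structurally: assuming $\Tbb(K)=0$ you invoke Proposition~\ref{prop:structure-of-dent}, compute $\widehat{\Hbb}(K)\cong\Abb(K)/\la\Abb(K)$ via the universal coefficient sequence, read off the top Alexander grading of $\widehat{\HFK}(K)$ as $\nu^-(K)$, and appeal to genus detection.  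The paper's proof is more elementary and self-contained (it does not need the full ideal-sequence classification), while your approach has the pleasant feature of showing more, namely that $\Tbb(K)=0$ forces $g(K)=\nu^-(K)=\tau(K)$.

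One point of caution: the step you flag as the ``main subtlety'' is correct but is not literally ``by definition''.  The bottom of the $\Fbb[\law]$-tower in $\widehat{\Hbb}(K)$ sits in Alexander grading $\tau(K)$ because, under the identification $\widehat{\Hbb}(K,s)\cong H_*\big(\mathcal F_s\subset\widehat{CF}(S^3)\big)$ (obtained by noting that $\law:\CFT(K,s)|_{\la=0}\to\CFT(K,s+1)|_{\la=0}$ is exactly the inclusion $\mathcal F_s\hookrightarrow\mathcal F_{s+1}$), the non-torsion part of $\widehat{\Hbb}(K,s)$ is the image of $H_*(\mathcal F_s)$ in $\widehat{HF}(S^3)\cong\Fbb$, which is nonzero precisely for $s\ge\tau(K)$.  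With this identification in hand, your bookkeeping with the ideal sequence goes through as written.  Note also that the phrase ``contradicting the assumption $g(K)>\tau(K)$'' is slightly off: you are proving the contrapositive, so the correct conclusion is simply $g(K)\le\tau(K)$.
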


\begin{proof}
The differential $d$ of the chain complex $\CFT(K)$ may be written
as $d=\sum_{i,j\geq 0}\la^i\law^jd^{i,j}$.
Using a spectral sequence determined by $(\CFT(K),d)$, 
we can replace $\CFT(K)$ with page $1$ of the 
aforementioned spectral sequence and assume that $d^{0,0}=0$.
Let $x$ denote a generator of $\widehat{\HFKT}(K,g(K))$. 
 If a generator 
$y$ appears in $d^{i,0}(x)$ (where $i>0$), it follows that 
\[g(K)=A(x)=A(\la^iy)=A(y)-i
<A(y).\] 
Since $\widehat{\HFKT}(K,s)=0$ for $s>g(K)$, the above observation
implies that $d^{i,0}(x)=0$.
In particular, $d(x)=\law^p z$ for some $p>0$ and some $z$
representing a class $[z]\in \Hbb(K)$. Clearly, $\law^p [z]=0$ in 
$\Hbb(K)$. If $z=d(x')$ for some $x'\in\CFT(K)$, then $d(x+\law^px')=0$. Since $\tau(K)<g(K)$, the image of $x+\law^px'$ under the chain map $\CFT(K)\to \widehat{\CFT}(K)$ represents a trivial homology class. Thus, $x$ appears in $d^{0,i}(y)$ (where $i>0$) for some generator $y\in\widehat{\HFKT}(K)$. So,
\[A(y)=A(\law^ix)=A(x)+i>g(K)\]
which is a 
contradiction. In particular, $[z]$ is  non-zero 
in $\Tbb(K)$. 
\end{proof}

\begin{cor}\label{cor:t-non-trivial}
If $K$ is a non-trivial knot then $\widehat{\Tt}(K)>0$ and $\Tt(K)>0$.
\end{cor}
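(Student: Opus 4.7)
The plan is to treat the two inequalities separately.

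For $\Tt(K) > 0$ I would combine Proposition~\ref{prop:torsion-nontrivial} with the Ozsv\'ath--Szab\'o inequality $|\tau(K)| \le g(K)$. Since $K$ is non-trivial we have $g(K) \ge 1$, so it is impossible to have both $\tau(K) = g(K)$ and $\tau(K) = -g(K)$ (either condition alone would still be compatible, but their conjunction would force $g(K)=0$). Hence at least one of the strict inequalities $g(K) > \tau(K)$ or $g(-K) = g(K) > -\tau(K) = \tau(-K)$ holds. Applying Proposition~\ref{prop:torsion-nontrivial} to $K$ in the first case yields $\Tt^-(K) \ge 1$, and applying it to $-K$ in the second case yields $\Tt^-(-K) = \Tt^+(K) \ge 1$. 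Either way, $\Tt(K) = \max\{\Tt^-(K),\Tt^+(K)\} \ge 1$.

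For $\widehat{\Tt}(K) > 0$ I plan to argue by contradiction using the module structure of $\widehat{\Hbb}(K)$. Setting $\la \mapsto 0$ identifies $\Ehat(K)$ with a model for the standard minus knot Floer complex $\CFKm(K)$, so $\widehat{\Hbb}(K)$ is a finitely generated $\Fbb[\law]$-module whose free part has rank exactly one. If $\widehat{\Tbb}(K) = 0$, then $\widehat{\Hbb}(K) \cong \Fbb[\law]$. The short exact sequence
\[
0 \longrightarrow \Ehat(K) \xrightarrow{\law} \Ehat(K) \longrightarrow \Ehat(K)/\law \longrightarrow 0,
\]
together with the long exact sequence it induces in homology and the observation that multiplication by $\law$ on $\Fbb[\law]$ is injective with one-dimensional cokernel, would then give $\widehat{\HFKT}(K) \cong \Fbb$. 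The theorem of Ozsv\'ath and Szab\'o that $\widehat{\HFKT}$ detects the Seifert genus would then force $g(K) = 0$, contradicting non-triviality. Hence $\widehat{\Tbb}(K) \ne 0$ and $\widehat{\Tt}(K) \ge 1$.

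The principal subtlety is verifying the rank-one structural claim for $\widehat{\Hbb}(K)$, which reduces to the classical fact that $\HFKm$ of a knot in $S^3$ has free $\Fbb[U]$-rank one. The remaining ingredients --- the two-case reduction via $|\tau(K)| \le g(K)$ and the appeal to the unknot-detection theorem --- are short and essentially formal.
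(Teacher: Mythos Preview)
Your proof is correct and follows the paper's approach. For $\Tt(K)>0$, your case split via $|\tau(K)|\le g(K)$ and Proposition~\ref{prop:torsion-nontrivial} is exactly what the paper does: if $\tau(K)<g(K)$ apply the proposition to $K$, otherwise $\tau(K)=g(K)\ge 1$ forces $\tau(-K)<g(-K)$ and one applies it to $-K$. For $\widehat{\Tt}(K)>0$, the paper simply asserts this is ``a trivial consequence of the definition'' without further comment; your argument via the rank-one structure of $\widehat{\Hbb}(K)\cong\HFKm(K)$, the long exact sequence, and Ozsv\'ath--Szab\'o genus detection is the natural way to justify that assertion and is correct. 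So your treatment of the first claim is more detailed than the paper's, but not genuinely different in spirit.
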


\begin{proof}
The first claim is a trivial consequence of the definition.
Since $K$ is non-trivial, $g(K)\geq 1$.
If $\tau(K)<g(K)$, Proposition~\ref{prop:torsion-nontrivial} gives 
the second claim. Otherwise, $\tau(-K)=-g(K)<g(-K)$
and $\Tt(K)\geq \Tt^-(-K)>0$.
\end{proof}

\begin{prop}\label{prop:connected-sum-torsion}
Suppose $K$ and $K'$ are knots in $S^3$. Then, 
\[\max\{\widehat{\Tt}(K),\widehat{\Tt}(K')\}\leq 
\widehat{\Tt}(K\#K')\leq \widehat{\Tt}(K)+\widehat{\Tt}(K').\]
\end{prop}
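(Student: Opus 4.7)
The plan is to reduce the proposition to the K\"unneth formula applied to the $\Fbb[\law]$-complex $\Ehat$. First I would establish a connected sum equivalence $\CFT(K\#K')\simeq \CFT(K)\otimes_\Abb\CFT(K')$ in the framework of \cite{AE-1}, obtained by gluing admissible Heegaard diagrams at the basepoints. Since $\CFT(K)$ is free over $\Abb$, tensoring over $\Abb$ with $\Fbb[\law]=\Abb/\la$ commutes with this tensor product and yields a chain equivalence $\Ehat(K\#K')\simeq \Ehat(K)\otimes_{\Fbb[\law]}\Ehat(K')$ of complexes of free graded $\Fbb[\law]$-modules. Because $\Fbb[\law]$ is a PID, the usual K\"unneth theorem then produces a short exact sequence of graded $\Fbb[\law]$-modules
\[0\to \widehat{\Hbb}(K)\otimes_{\Fbb[\law]}\widehat{\Hbb}(K')\to \widehat{\Hbb}(K\#K')\to \Tor_1^{\Fbb[\law]}\bigl(\widehat{\Hbb}(K),\widehat{\Hbb}(K')\bigr)\to 0.\]

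For the lower bound, I would use the splitting $\widehat{\Hbb}(K')=\Fbb[\law]\oplus\widehat{\Tbb}(K')$ to exhibit $\widehat{\Hbb}(K)\cong \widehat{\Hbb}(K)\otimes_{\Fbb[\law]}\Fbb[\law]$ as a direct summand of the tensor term, hence as a submodule of $\widehat{\Hbb}(K\#K')$. The torsion summand $\widehat{\Tbb}(K)$ therefore embeds into $\widehat{\Tbb}(K\#K')$, giving $\widehat{\Tt}(K)\leq \widehat{\Tt}(K\#K')$; swapping the roles of $K$ and $K'$ yields $\widehat{\Tt}(K')\leq \widehat{\Tt}(K\#K')$.

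For the upper bound, set $a=\max(\widehat{\Tt}(K),\widehat{\Tt}(K'))$ and $b=\min(\widehat{\Tt}(K),\widehat{\Tt}(K'))$. A direct computation with cyclic $\Fbb[\law]$-modules shows that the torsion submodule of $\widehat{\Hbb}(K)\otimes_{\Fbb[\law]}\widehat{\Hbb}(K')$ is annihilated by $\law^a$ and that $\Tor_1^{\Fbb[\law]}(\widehat{\Hbb}(K),\widehat{\Hbb}(K'))$ is annihilated by $\law^b$. Given any torsion class $m\in \widehat{\Tbb}(K\#K')$, its image $\bar m$ in the Tor quotient satisfies $\law^b\bar m=0$, so $\law^b m$ lies in the tensor subobject; being torsion in $\widehat{\Hbb}(K\#K')$ it is also torsion in the subobject, and hence annihilated by $\law^a$. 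Consequently $\law^{a+b}m=0$, which gives $\widehat{\Tt}(K\#K')\leq a+b=\widehat{\Tt}(K)+\widehat{\Tt}(K')$.

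The main obstacle is verifying the connected sum chain equivalence $\CFT(K\#K')\simeq \CFT(K)\otimes_\Abb\CFT(K')$ rigorously in the two-variable setting of \cite{AE-1}, including compatibility of the basepoint colorings; once that is in hand, the remaining steps are standard homological algebra over the PID $\Fbb[\law]$ and automatically respect the Alexander grading since $\la$ and $\law$ each carry a fixed Alexander degree.
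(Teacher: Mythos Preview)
Your proposal is correct and follows essentially the same approach as the paper: both arguments invoke the K\"unneth short exact sequence for $\widehat{\Hbb}(K\#K')$ over $\Fbb[\law]$, extract the lower bound from the embedded copies of $\widehat{\Tbb}(K)$ and $\widehat{\Tbb}(K')$ in the tensor term, and obtain the upper bound by observing that $\law^{\min}$ annihilates the Tor quotient while $\law^{\max}$ annihilates the torsion in the tensor term. Your write-up is in fact a bit more careful than the paper's on two points---you explicitly justify the chain-level connected sum formula before passing to $\Ehat$, and you correctly note that the torsion of the tensor term is annihilated by $\law^{\max}$ (not $\law^{\min}$, which only kills $\widehat{\Tbb}(K)\otimes\widehat{\Tbb}(K')$).
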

\begin{proof}
By K\"{u}nneth theorem for homology, there is an exact sequence
\begin{displaymath}
\begin{diagram}
0&\rTo&\widehat{\Hbb}(K)\otimes\widehat{\Hbb}(K')&\rTo& 
\widehat{\Hbb}(K\#K')&\rTo &\mathrm{Tor}_{\Fbb[\law]}
(\widehat{\Hbb}(K),\widehat{\Hbb}(K'))&\rTo&0.
\end{diagram}
\end{displaymath}
Thus, $\widehat{\Hbb}(K\#K')$ has torsion summands isomorphic to 
$\widehat{\Tbb}(K)$ and $\widehat{\Tbb}(K')$ and so 
\[\widehat{\Tt}(K\#K')\ge\max\{\widehat{\Tt}(K),
\widehat{\Tt}(K')\}.\]
Moreover, multiplication by 
$\law^{\min\{\widehat{\Tt}(K),\widehat{\Tt}(K')\}}$ is trivial on 
$\widehat\Tbb(K)\otimes\widehat\Tbb(K')$ and  
$\mathrm{Tor}_{\Fbb[\law]}(\widehat{\Hbb}(K),\widehat{\Hbb}(K'))$. 
Therefore, $\widehat{\Tt}(K\#K')$ is at most 
$\widehat{\Tt}(K)+\widehat{\Tt}(K')$. 
\end{proof}
\begin{remark}
One can construct a similar lower bound $\Tt_{p/q}$ by sending 
$\la$ and $\law$ to $\lav^p$ and $\lav^q$ in $\Fbb[\lav]$, 
respectively, which satisfies in a statement
similar to Proposition~\ref{prop:connected-sum-torsion}. 
\end{remark}

\section{Examples and applications}\label{sec:examples}
\begin{example}\label{ex:torus-knot}
Let $K=T_{p,q}$ be the $(p,q)$ torus knot with $0<p<q$. The chain 
homotopy type of $\CFT(K)$ is specified by the Alexander polynomial 
of $K$ \cite{OS-lspace}. Specifically, the symmetrized Alexander 
polynomial of $K$ is equal to 
\[\Delta_{K}(t)=t^{-\frac{(p-1)(q-1)}{2}}\frac{(t^{pq}-1)(t-1)}
{(t^p-1)(t^q-1)}=\sum_{i=0}^{2n}(-1)^it^{a_i}\]
for a sequence $a_0>a_1>...>a_{2n}$ of integers where 
$a_i=-a_{2n-i}$. The complex $\CFT(K)$ is chain homotopic to the 
bigraded complex freely generated over $\Abb$ with generators 
$\{\x_i\}_{i=0}^{2n}$ and differential 
\[d\x_i=\begin{cases}
\la^{a_{i-1}-a_{i}}\x_{i-1}+\law^{a_{i}-a_{i+1}}\x_{i+1}
\quad&\text{if}\ i\ \text{is odd}\\
0\quad &\text{if}\ i\ \text{is even}.
\end{cases}\]
Furthermore, the gradings are specified by 
$\mu(\x_i)=m_i$ and $A(\x_i)=a_i$
where $m_i$ is defined recursively by $m_0=0$ and
\[m_{2i}=m_{2i-1}-1\quad\text{and}\quad
m_{2i+1}=m_{2i}-2(a_{2i}-a_{2i+1})+1.\]
Consequently, $\Tbb(K)=0$ and  $\Abb(K)=\Hbb(K)$ is generated by 
$[\x_{2i}]$ for $i=0,...,n$. Moreover, 
\[\law^{a_{2i-1}-a_{2i}}\x_{2i}=\la^{a_{2i-2}-a_{2i-1}}\x_{2i-2}.\]
Thus, $\imath(T_{p,q})=(i_0=0<i_1<...<i_{n})$ where 
\[i_k=\sum_{j=0}^{2(n-k)}(-1)^ja_j.\]
For any knot $K$, $\CFT(-{K})\simeq\CFT(K)^\star$. So for 
$-{K}=T_{p,-q}$, the above discussion implies that $\CFT(-{K})$ is 
chain homotopic to the chain complex freely generated over $\Abb$ 
with generators $\{\x_i\}_{i=0}^{2n}$ and differential 
\[
d\x_i=\begin{cases}
0\quad&\text{if}\ i\ \text{is odd}\\
\la^{a_{i-1}-a_i}\x_{i-1}+\law^{a_i-a_{i+1}}\x_{i+1}\quad
&\text{if}\ i\ \text{is even}.
\end{cases}
\]
Moreover, the bigradings of generators is given by 
$(\mu(\x_i),A(\x_i))=(-m_{2n-i},a_i)$. Thus, $\Abb(-{K})\cong \Abb$ 
is generated by $\sum_{k=0}^n\la^{i_{n-k}}\law^{i_k}\x_{2k}$, 
while $[\x_{2k+1}]$ is torsion of order $i_{k+1}$ 
for $k=0,...,n-1$. Therefore, 
\[\Tt(T_{p,q})=\Tt^+(T_{p,q})=i_n=\nu^-(T_{p,q})
=\frac{(p-1)(q-1)}{2}.\]

Consider $\Ehat(K)=\CFT(K)\otimes_{\hat{\phi}}\Fbb[\law]$, 
where as before $\widehat{\phi}:\Abb\to\Fbb[\law]$ is the homomorphisms 
defined by $\widehat{\phi}(\la)=0$ and $\widehat{\phi}(\law)=\law$.  By the 
above discussion, $\widehat{\Hbb}(K)$ has a free summand generated by 
$[\x_0]$. Moreover, for each $1\le i\le n$, $[\x_{2i}]$ is a torsion 
class of order $a_{2i-1}-a_{2i}$. It is easy to check that 
\[a_1-a_2=p-1\quad\text{and}\quad a_{2i-1}-a_{2i}\le p-1\quad
\text{for\ every\ }i=1,...,n.\] 
Therefore, $\widehat{\Tt}(T_{p,q})=p-1$.

{\bf Special case: $p=2$, $q=2n+1$.} For the torus knot 
$T_{2,2n+1}$ we have
\[\Delta_{T_{2,2n+1}}(t)=\sum_{i=0}^{2n}(-1)^{i}t^{i-n}.\]
So $a_i=n-i$ for $0\le i\le 2n$, and thus 
\[\imath(T_{2,2n+1})=(0<1<2<...<n)\quad\text{and}
\quad\Abb(T_{2,2n+1})=\langle \la^i\law^j\ |\ i+j\ge n\rangle_\Abb.\]

{\bf Special case: $p=3$, $q=3k\pm 1$.} Suppose $q=3k+1$. First, 
we compute the symmetrized Alexander polynomial of $T_{3,3k+1}$:

\[\begin{split}
\Delta_{T_{3,3k+1}}(t)&=t^{-3k}\frac{(t^{3(3k+1)}-1)(t-1)}
{(t^{3k+1}-1)(t^3-1)}=t^{-3k}\frac{t^{2(3k+1)}+t^{3k+1}+1}{t^2+t+1}\\
&=t^{-3k}\frac{t^{3k+2}(t^{3k}-1)+t^{3k}(t^2+t+1)+1-t^{3k}}
{t^2+t+1}\\ &=\sum_{i=1}^{k} (t^{3i}-t^{3i-1})+1+
\sum_{i=-k}^{-1}(t^{3i}-t^{3i+1}).
\end{split}\]
Therefore, $n=2k$, and 
\[i_{j}=\begin{cases}
j\quad&\text{if}\ 0\le j< k\\
2j-k\quad&\text{if}\ k\le j\le n.
\end{cases}\quad\Rightarrow\quad
\Abb(T_{3,3k+1})=\langle\la^{i}\law^j\ |\ 2i+j\ge 3k\ 
\text{and}\ i+2j\ge 3k\rangle_\Abb.\]
For $q=3k-1$, an analogous argument implies that 
\[\Abb(T_{3,3k-1})=\langle \la^i\law^j\ |\ 2i+j\ge 3k-2\ 
\text{and}\ i+2j\ge 3k-2\rangle_\Abb\]

More generally, the ideal sequence for the torus knot $T_{p,pn+1}$ takes the 
explicit form
\begin{equation}\label{eq:T-ideal-sequence-1}
\begin{split}
&\imath(T_{p,pn+1})=\left(0<1<\cdots<n<n+2<\cdots<3n<3n+3<\cdots
<{p \choose 2}n\right)\\ &\text{or equivalently,}\quad
i_k=\left(k-\frac{n}{2}\left\lfloor\frac{k}{n}\right\rfloor\right)
\left(\left\lfloor\frac{k}{n}\right\rfloor+1\right),
\quad \text{for }k=0,1,\ldots,n(p-1).
\end{split}
\end{equation}
One useful computation  is the degree computation
for the generator
\[\la^{i_{\lfloor n(p-1)/2\rfloor}}\law^{i_{\lceil n(p-1)/2\rceil}}
\in\Abb(T_{p,pn+1}),\]
which follows from Equation~\ref{eq:T-ideal-sequence-1}:
\begin{equation}\label{eq:md-torus-knot}
\begin{split}
i_{\lfloor n(p-1)/2\rfloor}+
i_{\lceil n(p-1)/2\rceil}
&=\left(\left\lfloor \frac{n(p-1)}{2} \right\rfloor
-\frac{n}{2}\left\lfloor \frac{\left\lfloor \frac{n(p-1)}{2} 
\right\rfloor}{n} \right\rfloor\right)\left(
\left\lfloor \frac{\left\lfloor \frac{n(p-1)}{2} \right\rfloor}{n} 
\right\rfloor+1\right)\\
&\quad\quad +\left(\left\lceil \frac{n(p-1)}{2} \right\rceil
-\frac{n}{2}\left\lfloor \frac{\left\lceil \frac{n(p-1)}{2} 
\right\rceil}{n} \right\rfloor\right)\left(
\left\lfloor \frac{\left\lceil \frac{n(p-1)}{2} \right\rceil}{n} 
\right\rfloor+1\right)\\
&=n\left\lfloor\frac{p^2}{4}\right\rfloor.
\end{split}
\end{equation}
The minimum degree of a monomial in $\Abb(K)$ will be denoted
by $\md(K)$. The above computation shows that 
$\md(T_{p,pn+1})=n\left\lfloor\frac{p^2}{4}\right\rfloor$.

\end{example}

\begin{remark}\label{remark:A-for-T}
One can in fact show that for every $p<q$, there is an inclusion
\begin{equation}\label{eq:A-for-T}
\Abb(T_{p,q})\leq \Abb_{p,q}=
\left\langle \la^i\law^j\ \Big|\ ki+(p-k)j\geq 
\frac{k(p-k)(q-1)}{2}\quad\text{for}\ k=1,\ldots,p-1 
\right\rangle_\Abb.
\end{equation}
However, the equality is not satisfied for $p>3$, although the 
two ideals are very closely related. 
\end{remark}

\begin{prop}\label{prop:Gordianadj}
If a torus knot $K=T_{{p,p'}}$ with 
$0<p<p'$ is Gordian adjacent to a torus knot $K'=T_{q,q'}$ with 
$0<q<q'$, then 
\[\Abb(T_{q,q'})\le\Abb(T_{p,p'})\quad\text{and}
\quad\law^{u}\Abb(T_{p,p'})\le\Abb(T_{q,q'}),\] where 
$u=u(K')-u(K)=\frac{(p-1)(p'-1)}{2}-\frac{(q-1)(q'-1)}{2}$. 
In particular, $\mathfrak{a}(T_{q,q'})\ge \mathfrak{a}(T_{p,p'})$.
\end{prop}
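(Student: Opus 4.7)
The plan rests on combining two inputs: the rigidity of $u^-$ for positive torus knots, and the cobordism-map inclusions already established in the previous sections.

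First I would invoke Corollary \ref{cor-1:intro}, which gives $u^-(T_{q,q'}) = (q-1)(q'-1)/2 = u(T_{q,q'})$. In \emph{any} unknotting sequence for $T_{q,q'}$, the number of negative crossing changes is therefore already at least $u(T_{q,q'})$. Consequently, in a minimal unknotting sequence—one of total length exactly $u(T_{q,q'})$—every crossing change must be negative and zero positive crossing changes occur: if such a sequence had $m$ negative and $n$ positive crossings with $m+n=u(T_{q,q'})$, then $m \ge u^-(T_{q,q'}) = m+n$ forces $n=0$. Applying this to the minimal unknotting sequence for $T_{q,q'}$ passing through $T_{p,p'}$ (supplied by the Gordian adjacency hypothesis), the initial segment taking $T_{q,q'}$ to $T_{p,p'}$ consists of precisely $u = u(T_{q,q'}) - u(T_{p,p'})$ negative crossing changes and no positive crossing changes.

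Next I would invoke the inclusions of the torsion-free part of knot Floer homology recorded in the introduction (and formalized through Theorem \ref{thm:cobordisim-maps} and Proposition \ref{prop:dent-depth}): whenever $K'$ is obtained from $K$ by $m$ negative and $n$ positive crossing changes, then
\[
\law^n \Abb(K) \subset \Abb(K') \quad \text{and} \quad \law^m \Abb(K') \subset \Abb(K).
\]
Specializing to $(K,K',m,n) = (T_{q,q'}, T_{p,p'}, u, 0)$ produces exactly the two claimed inclusions $\Abb(T_{q,q'}) \le \Abb(T_{p,p'})$ and $\law^u \Abb(T_{p,p'}) \le \Abb(T_{q,q'})$.

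The final inequality $\md(T_{q,q'}) \ge \md(T_{p,p'})$ is then immediate from the first inclusion: any monomial $\la^i \law^j$ lying in the smaller ideal $\Abb(T_{q,q'})$ also lies in $\Abb(T_{p,p'})$, so the minimum polynomial degree among monomials can only grow when passing from the larger ideal to the smaller one. I do not foresee a serious obstacle here. The only delicate point is the rigidity argument in the first paragraph—namely that the inequality $m \ge u^-(T_{q,q'})$ must be applied to the specific minimal sequence witnessing Gordian adjacency—but this is legitimate because $u^-$ is defined as an infimum over \emph{all} unknotting sequences.
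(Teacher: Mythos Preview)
Your argument is correct and matches the paper's: both first use $u^-(T_{q,q'})=u(T_{q,q'})$ to force the minimal unknotting sequence through $T_{p,p'}$ to consist entirely of negative crossing changes, and then apply the cobordism-map structure on $\Abb(-)$ to obtain the two ideal inclusions. The only cosmetic difference is that you invoke the inclusion statement from the introduction directly, whereas the paper unpacks it in place by noting that $\Hbb(T_{r,s})=\Abb(T_{r,s})$ and that any homogeneous $\Abb$-module map between such ideals is multiplication by a power of $\law$.
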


\begin{proof} Since $T_{p,p'}$ is Gordian adjacent to $T_{q,q'}$, 
there exists $\Abb$-homomorphisms 
\[f:\Hbb(T_{p,p'})\to \Hbb(T_{q,q'})\quad\text{and}\quad 
g:\Hbb(T_{q,q'})\to\Hbb(T_{p,p'})\]
such that $f\circ g=g\circ f=\law ^{u}$. Note that 
$\Hbb(T_{p,p'})=\Abb(T_{p,p'})$ and $\Hbb(T_{q,q'})=\Abb(T_{q,q'})$. 
So, $f$ and $g$ are defined by multiplication with polynomials 
$p,q\in\Abb=\Fbb[\la,\law]$. Thus, 
$f\circ g=g\circ f=\law ^{u}$ implies that 
$f=\law^{m^+}$ and $g=\law^{m^-}$ such that 
$m^++m^-=u$. On the other hand, by 
Corollary~\ref{cor:bounds}, a minimal unknotting 
sequence for a torus knot only consists of negative crossing changes. 
Thus $\deg f=m^+=0$ and $\deg g=m^-=u$. Therefore, 
$f=\mathrm{id}$, $g$ is multiplication by $\law^u$ and 
$\Abb(T_{q,q'})\leq\Abb(T_{p,p'})$ and 
$\law^{u}\Abb(T_{p,p'})\leq\Abb(T_{q,q'})$.  
\end{proof}

The computations in Example~\ref{ex:torus-knot} and the Proposition~\ref{prop:Gordianadj} have a number 
of quick consequences. One outcome is the following corollary 
that was suggested to us by  Jennifer Hom. This result was first 
proved by Borodzik and Livingston in \cite{Bor-Liv}.
\begin{cor} If a torus knot $T_{{p,p'}}$ with 
$0<p<p'$ is Gordian adjacent to a torus knot $T_{q,q'}$ with 
$0<q<q'$, then $p\le q$.
\end{cor}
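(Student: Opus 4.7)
My plan is to apply Proposition~\ref{prop:Gordianadj}, specifically the inclusion $\law^u\Abb(T_{p,p'})\subseteq\Abb(T_{q,q'})$ (where $u=\nu^-(T_{q,q'})-\nu^-(T_{p,p'})\geq 0$), to a single carefully chosen generator of $\Abb(T_{p,p'})$ and read off $p\leq q$ from the requirement that its image lie in $\Abb(T_{q,q'})$. The entire argument is driven by identifying one monomial whose $\law$-exponent is too small to fit into $\Abb(T_{q,q'})$ unless $p\leq q$.

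The key ingredient is the following structural claim: for every torus knot $T_{a,b}$ with $2\leq a<b$, the ideal sequence satisfies $i_1(T_{a,b})=1$ and $i_{n-1}(T_{a,b})=\nu^-(T_{a,b})-(a-1)$. From the formula $i_k=\sum_{j=0}^{2(n-k)}(-1)^j a_j$ of Example~\ref{ex:torus-knot}, the difference $i_n-i_{n-1}$ equals $a_1-a_2$, so the claim reduces to $a_0-a_1=1$ and $a_1-a_2=a-1$. The first is immediate from the expansion $\Delta_{T_{a,b}}(t)=1-t+\cdots$ of the unsymmetrized Alexander polynomial. For the second I would invoke the symmetry of $\Delta_{T_{a,b}}$ together with the fact that the smallest positive element of the numerical semigroup $\Gamma_{a,b}=\langle a,b\rangle$ is $a$: the coefficients at $t^2,\ldots,t^{a-1}$ in $\Delta_{T_{a,b}}(t)$ vanish and the next $+1$ coefficient appears at $t^a$, forcing $a_2=\nu^- - a$ by symmetry. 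The value $i_1=1$ is extracted from the same relation $\law^{a_1-a_2}[\mathbf{x}_2]=\la[\mathbf{x}_0]$ that already appears in Example~\ref{ex:torus-knot}.

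With this in hand, $\la\law^{\nu^-(T_{p,p'})-(p-1)}=\la^{i_1(T_{p,p'})}\law^{i_{n-1}(T_{p,p'})}$ is a generator of $\Abb(T_{p,p'})$, so multiplying by $\law^u$ and invoking Proposition~\ref{prop:Gordianadj} places $\la\law^{\nu^-(T_{q,q'})-(p-1)}$ inside $\Abb(T_{q,q'})$. Some generator $\la^{i_{k'}(T_{q,q'})}\law^{i_{n'-k'}(T_{q,q'})}$ of $\Abb(T_{q,q'})$ must divide this monomial, which forces $i_{k'}(T_{q,q'})\leq 1$, and since $\imath(T_{q,q'})$ is strictly increasing with $i_0=0, i_1=1$, the only options are $k'\in\{0,1\}$. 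The case $k'=0$ would require $\nu^-(T_{q,q'})\leq\nu^-(T_{q,q'})-(p-1)$, impossible for $p\geq 2$; hence $k'=1$, and the divisibility on $\law$-exponents gives $\nu^-(T_{q,q'})-(q-1)=i_{n'-1}(T_{q,q'})\leq\nu^-(T_{q,q'})-(p-1)$, which rearranges to $p\leq q$. The main obstacle is justifying $a_2=\nu^- - a$ (the little piece of Alexander-polynomial / semigroup bookkeeping going beyond what is explicitly worked out in Example~\ref{ex:torus-knot}); everything else is immediate from the two inclusions in Proposition~\ref{prop:Gordianadj}.
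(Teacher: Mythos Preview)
Your argument is correct and is essentially the same as the paper's proof: both apply the inclusion $\law^{u}\Abb(T_{p,p'})\subset\Abb(T_{q,q'})$ from Proposition~\ref{prop:Gordianadj} to the single generator $\la^{i_1}\law^{i_{n-1}}=\la\,\law^{\nu^-(T_{p,p'})-(p-1)}$ and read off $p\leq q$ from the divisibility constraint, using $i_1=1$ and $i_n-i_{n-1}=p-1$. The only difference is that you spell out the Alexander-polynomial justification for $a_0-a_1=1$ and $a_1-a_2=p-1$, whereas the paper treats these as already established in Example~\ref{ex:torus-knot} (where $a_1-a_2=p-1$ is stated explicitly).
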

\begin{proof}
Assume that \[\imath(T_{p,p'})=(i_0<\cdots<i_n)\quad \text{and}\quad
\imath(T_{q,q'})=(j_0<\cdots<j_m).\]
Proposition~\ref{prop:Gordianadj} implies that 
$\law^{j_m-i_n}\Abb(T_{p,p'})\le \Abb(T_{q,q'})$. Thus,
\[\law^{j_m-i_n+i_{n-1}}\la^{i_1}
=\law^{u(K')-u(K)}\law^{i_{n-1}}\la^{i_1}\in \Abb(K').\]
Since $i_1=j_1=1$, $i_n-i_{n-1}=p-1$ and $j_m-j_{m-1}=q-1$, 
the above conclusion implies 
\[j_m-i_n+i_{n-1}\geq j_{m-1}\quad \Leftrightarrow\quad
\frac{(q-1)(q'-1)}{2}-p+1\geq \frac{(q-1)(q'-1)}{2}-q+1
\quad \Leftrightarrow\quad q\geq p,
\]
completing the proof.
\end{proof}
We also obtain a proof of the following corollary. The second 
statement of the corollary was first proved by Peter Feller 
\cite{Feller}.

\begin{cor}\label{cor:adjacency-T}
If the torus knot $T_{p,pn+1}$ is Gordian adjacent to the torus knot 
$T_{q,qm+1}$ then 
\[n\left\lfloor\frac{p^2}{4}\right\rfloor\leq 
m\left\lfloor\frac{q^2}{4}\right\rfloor.
\]
If $T_{2,n}$ is Gordian adjacent to $T_{3,m}$, where $n$ is odd
and $m$ is not a multiple of $3$, then 
$n\le \frac{4}{3}m+\frac{1}{3}$. 
\end{cor}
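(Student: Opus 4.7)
The plan is to deduce both inequalities from the ideal containment
$\Abb(T_{q,q'}) \le \Abb(T_{p,p'})$ provided by Proposition~\ref{prop:Gordianadj},
by comparing the minimum degrees of monomial generators in these ideals.
The key observation is that $\md$ reverses containment of monomial ideals:
if $I \le J$, then any minimum-degree monomial generator of $I$ also lies in $J$,
so $\md(J) \le \md(I)$.

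For the first inequality, applying Proposition~\ref{prop:Gordianadj} with
$p' = pn+1$ and $q' = qm+1$ produces $\Abb(T_{q,qm+1}) \le \Abb(T_{p,pn+1})$,
hence $\md(T_{p,pn+1}) \le \md(T_{q,qm+1})$. Invoking the degree computation of
Equation~\ref{eq:md-torus-knot}, which evaluates
$\md(T_{p,pn+1}) = n \lfloor p^2/4 \rfloor$, immediately yields the claim
$n \lfloor p^2/4 \rfloor \le m \lfloor q^2/4 \rfloor$.

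For the second inequality the hypothesis $m \not\equiv 0 \pmod{3}$ splits into
two cases. If $m = 3k+1$, the first part applies directly with $p=2$, $q=3$:
writing $n = 2N+1$, it gives $N \le 2k$, hence
$n \le 4k+1 = \tfrac{4m-1}{3} < \tfrac{4}{3}m + \tfrac{1}{3}$.
If $m = 3k-1$, Equation~\ref{eq:md-torus-knot} no longer applies to $T_{3,m}$,
so I would instead use the explicit description
$\Abb(T_{3,3k-1}) = \langle \la^i\law^j \mid 2i+j \ge 3k-2,\ i+2j \ge 3k-2 \rangle_\Abb$
computed in Example~\ref{ex:torus-knot}. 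A brief integer linear programming
argument (summing the two defining inequalities and checking attainment by the
monomial $\la^{k-1}\law^{k}$) gives $\md(T_{3,3k-1}) = 2k-1$. Combined with
$\md(T_{2,n}) = (n-1)/2$ from Example~\ref{ex:torus-knot}, the containment
$\Abb(T_{3,m}) \le \Abb(T_{2,n})$ from Proposition~\ref{prop:Gordianadj}
produces $(n-1)/2 \le 2k-1$, i.e., $n \le 4k-1 = \tfrac{4}{3}m + \tfrac{1}{3}$.

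I do not expect any serious obstacle, since Proposition~\ref{prop:Gordianadj}
and the ideal computations in Example~\ref{ex:torus-knot} already do the heavy
lifting. The only mildly delicate step is the minimum-degree computation for
$T_{3,3k-1}$, where one must verify not merely the lower bound $i+j \ge 2k-1$
(obtained after rounding the summed constraint $3(i+j) \ge 6k-4$ up to the next
integer) but also its attainment by an explicit monomial, confirming that the
ceiling operation does not lose information and that the bound $n \le \tfrac{4}{3}m + \tfrac{1}{3}$
in this subcase is actually sharp.
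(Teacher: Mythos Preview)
Your proposal is correct and follows essentially the same strategy as the paper: both parts reduce to the containment $\Abb(T_{q,q'})\le\Abb(T_{p,p'})$ from Proposition~\ref{prop:Gordianadj} and a comparison of minimum monomial degrees, using the computations of Example~\ref{ex:torus-knot}. The only cosmetic difference is that the paper treats the second statement uniformly for both residues of $m$ modulo $3$ (observing that in either case $\Abb(T_{3,m})=\langle \la^i\law^j\mid 2i+j\ge m-1,\ i+2j\ge m-1\rangle_\Abb$ and computing $\min\{i+j\}=\lceil 2(m-1)/3\rceil$), whereas you split into the cases $m=3k\pm 1$; your case $m=3k+1$ via the first part is equivalent to computing $\md(T_{3,3k+1})=2k$ directly.
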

\begin{proof}
Proposition~\ref{prop:Gordianadj} implies that $\md(T_{p,pn+1})\le\md(T_{q,qm+1})$. So, following the computations of 
Example~\ref{ex:torus-knot} we have
\[n\left\lfloor\frac{p^2}{4}\right\rfloor\leq 
m\left\lfloor\frac{q^2}{4}\right\rfloor.
\]
Moreover, from the same example we know that 
$\Abb(T_{3,m})\le\Abb(T_{2,n})$ if and only if for any pair $(i,j)$ 
such that $i+2j\ge m-1$ and $j+2i\ge m-1$, we have 
$i+j\ge \frac{n-1}{2}$. It is clear that
\[\min\{i+j\ |\ i+2j\ge m-1\ \text{and}\ 2i+j\ge m-1\}=\left\lceil
\frac{2(m-1)}{3}\right\rceil=\left\lfloor\frac{2m}{3}-\frac{1}{3}
\right\rfloor.\]
Thus, $\frac{n-1}{2}\le \frac{2m}{3}-\frac{1}{3}$ and 
$n\le \frac{4}{3}m+\frac{1}{3}$.
\end{proof}

\begin{example}
An interesting example is the case of the figure $8$ knot,
where the chain complex is generated by $5$ generators 
$X,Y,Z,W$ and $B$, where $d(B)=d(X)=0$ while
$d(W)=\la Z+\law Y$, $d(Y)=\la Y$ and $d(Z)=\law X$.
Thus, $\Tbb(K)$ is generated by $\x=[X]$ and $\la\x$ and 
$\law\x$ are both zero. Moreover, $\Abb(K)$ is generated 
by $[B]$ and is isomorphic with $\Abb$. In particular, 
$\nu^-(K)=\nu^-(-K)=0$, while $\Tt(K)=\widehat{\Tt}(K)=1$. 
The sub-complex generated by $X,Y,Z$ and $W$ will 
be referred to as a {\emph{square}}.
\end{example}

\begin{example}\label{ex:alternating}
Alternating knots are known to have simple knot 
Floer chain complexes.
The restriction on the Alexander and Maslov grading of generators 
(that their difference is a constant number) implies that the chain 
complex decomposes as the (shifted) direct sum of a copy of 
$\CFT(\pm T_{2,2n+1})$ and several squares. In particular, if $K$ is 
an alternating knot with $\tau(K)>0$ then 
\[\imath(K)=(0<1<2<\cdots<\tau(K)),\]
while $\Tt^-(K)\leq 1$ and $\depth(K)=\depth^-(K)=\Tt^+(K)=\tau(K)$.  
\end{example}

Example~\ref{ex:alternating} gives interesting bounds on the 
{\emph{alternation number}} $\alt(K)$ of a knot $K$, defined as 
the minimum Gordian distance between $K$ and an alternating knot.
The first bound is very similar to, yet different from, 
the bound constructed in \cite[Corollary 2.2]{Feller-etal}. 

\begin{prop}\label{prop:alternation-for-K}
The alternation number $\alt(K)$ of a knot $K\subset S^3$
satisfies 
\[\alt(K)\geq \nu^-(K)-\md(K),\quad\alt(K)\geq \widehat{\Tt}(K)-1
\quad\text{and}\quad
\alt(K)\geq\min\{\Tt(K)-1,\nu^-(K)\}.\]
\end{prop}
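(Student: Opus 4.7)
Let $K'$ be an alternating knot realizing $\alt(K)=u(K,K')=m$, split as $m=m^++m^-$ into positive and negative crossing changes. Iterating Theorem~\ref{thm:cobordisim-maps} along the optimal sequence and composing, I obtain homogeneous chain maps
\[\fmap^+\colon\CFT(K)\to\CFT(K'),\qquad\fmap^-\colon\CFT(K')\to\CFT(K)\]
of Alexander degrees $m^+$ and $m^-$ respectively (by Proposition~\ref{prop:shifts}), with both compositions chain homotopic to multiplication by $\law^m$. I will extract all three bounds by studying what $\fmap^\pm$ induce on suitable homological quotients, using the structural description of $\CFT$ for alternating knots from Example~\ref{ex:alternating}.

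\textbf{First bound.} Pass to homology and then to the torsion-free quotients, obtaining $\Abb$-module maps $\Abb(K)\to\Abb(K')\to\Abb(K)$ whose compositions equal $\law^m$ (as in Lemma~\ref{lem:injectivity}). Since each of $\Abb(K),\Abb(K')$ contains both $\la^{\nu^-}$ and $\law^{\nu^-}$ of the respective knot (Proposition~\ref{prop:structure-of-dent}), any $\Abb$-module homomorphism between them extends to a multiplication on the fraction field; the presence of both pure monomial extremes forces the extending factor to lie in $\Abb$ itself, and unique factorization of $\law^m$ then forces both induced maps to be pure $\law$-powers $\law^{m^+}$ and $\law^{m^-}$. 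Now exploit Example~\ref{ex:alternating}: when $\tau(K')\ge 0$, $\Abb(K')$ is generated by monomials of total degree $\tau(K')$, so $\law^{m^+}\Abb(K)\subseteq\Abb(K')$ applied to a monomial in $\Abb(K)$ of minimum total degree gives $m^+\ge\tau(K')-\md(K)$, while $\law^{m^-}\Abb(K')\subseteq\Abb(K)$ applied to $\law^{\tau(K')}$ gives $m^-\ge\nu^-(K)-\tau(K')$; summing yields $m\ge\nu^-(K)-\md(K)$. When $\tau(K')\le 0$, the example gives $\Abb(K')=\Abb$, and the second containment alone yields the stronger $m^-\ge\nu^-(K)$.

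\textbf{Second bound.} Tensor the chain maps with $\Fbb[\law]$ along $\widehat{\phi}\colon\la\mapsto 0$. A class $x\in\widehat{\Tbb}(K)$ with $\law^a x=0$ maps to a class with $\law^a\cdot\widehat\fmap^+_\star(x)=0$, so $\widehat\fmap^+_\star(x)\in\widehat{\Tbb}(K')$; killing it there by $\law^{\widehat{\Tt}(K')}$ and applying $\widehat\fmap^-_\star$ gives, via the composition identity, $\law^{m+\widehat{\Tt}(K')}\cdot x=0$, so $\widehat{\Tt}(K)\le m+\widehat{\Tt}(K')$. For alternating $K'$, the structural splitting from Example~\ref{ex:alternating} as a shifted $\CFT(\pm T_{2,2n+1})$-summand plus squares, combined with the torus-knot computation $\widehat{\Tt}(T_{2,2n+1})=1$ of Example~\ref{ex:torus-knot} and the analogous order-one contribution of each square, shows $\widehat{\Tt}(K')\le 1$, so $m\ge\widehat{\Tt}(K)-1$.

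\textbf{Third bound.} Running the same torsion argument on $\Tbb$ itself (no tensor) yields $\Tt^-(K)\le m+\Tt^-(K')$. Case split on $\tau(K')$: if $\tau(K')\ge 0$ then $\Tt^-(K')\le 1$ by Example~\ref{ex:alternating}, so $m\ge\Tt^-(K)-1$; if $\tau(K')\le 0$ then the $\tau\le 0$ sub-case of the first-bound analysis already gives $m^-\ge\nu^-(K)$. Hence for every alternating $K'$, $m\ge\min\{\Tt^-(K)-1,\nu^-(K)\}$. The symmetric analysis after mirroring $K$ and $K'$ supplies the analogous bound involving $\Tt^+(K)$, and combining the two via $\Tt(K)=\max\{\Tt^-(K),\Tt^+(K)\}$ yields the stated inequality.

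\textbf{Main obstacle.} The most delicate step is the torsion-free reduction: identifying each induced map $\Abb(K)\leftrightarrow\Abb(K')$ as multiplication by a pure $\law$-power. Both the full-monomial content of the ideals and the specific form $\law^m$ of the composition are needed to rule out $\la$-factors; without this rigidity, the clean ideal containments used in all three bounds would degrade to weaker averaged statements, and the crisp case split on the sign of $\tau(K')$ in the third bound would lose its direct algebraic translation.
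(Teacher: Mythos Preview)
Your proofs of the first two inequalities are correct and follow the paper's approach closely (with a somewhat more explicit justification that the induced maps on the torsion-free quotients are multiplication by pure $\law$-powers).

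For the third inequality, your case analysis correctly establishes
\[\alt(K)\ge\min\{\Tt^-(K)-1,\nu^-(K)\},\]
and the mirrored version $\alt(K)\ge\min\{\Tt^+(K)-1,\nu^-(-K)\}$. However, the final ``combining'' step fails: the maximum of these two minima need not dominate $\min\{\Tt(K)-1,\nu^-(K)\}$, because the second estimate pairs $\Tt^+(K)$ with $\nu^-(-K)$, not with $\nu^-(K)$. In fact the third inequality as stated is \emph{false}: for $K=T_{2,5}$ one has $\alt(K)=0$, $\nu^-(K)=2$, $\Tt^-(K)=0$ (since $\Tbb(T_{p,q})=0$), and $\Tt^+(K)=2$ (Example~\ref{ex:torus-knot}), hence $\Tt(K)=2$ and $\min\{\Tt(K)-1,\nu^-(K)\}=1>0$. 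The paper's own argument shares the same flaw at a different step: it asserts $\Tt(K')=1$ for alternating $K'$ with $\nu^-(K')>0$, whereas only $\Tt^-(K')\le 1$ follows from the description of $\Tbb(K')$; by Example~\ref{ex:alternating} one has $\Tt^+(K')=\tau(K')$, which may be arbitrarily large. The inequality that both your argument and the paper's actually establish is $\alt(K)\ge\min\{\Tt^-(K)-1,\nu^-(K)\}$ (together with its mirror analogue).
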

\begin{proof}
Let us assume that $K$ is modified
to an alternating knot $K'$ using a sequence of $m^+$ positive 
crossing changes and $m^-$ negative crossing changes and that
$\alt(K)=m^++m^-$.
It follows that $\nu^-(K')\geq \nu^-(K)-m^-$.
Since $\law^{m^+}\Abb(K)$ is a subset of $\Abb(K')$, it 
follows that $\Abb(K')$ includes a monomial of degree
$m^++\md(K)$.
Nevertheless, every monomial in $\Abb(K')$ has degree at 
least $\nu^-(K')$. This means that 
\begin{align*}
&\md(K)+m^+ \geq 
\nu^-(K')\geq \nu^-(K)-m^-\quad
\Rightarrow\quad
m^++m^-\geq \nu^-(K)-\md(K),
\end{align*}
and completes the proof of the first inequality. The second and third
inequalities are easier. For the second equality note that in the 
above situation,
\[u(K,K')\geq \widehat{\Tt}(K)-\widehat{\Tt}(K')
=\widehat{\Tt}(K)-1.\]
For the third inequality, we have
\[\nu^-(K)\leq \nu^-(K')+m^-\quad\text{and}\quad
\Tt(K)\leq \Tt(K')+m^++m^-.\]

If $\nu(K')=0$ then $\nu^-(K)\le m^-\leq \alt(K)$. Otherwise, 
$\tau(K')=\nu(K')>0$ and
$\Tbb(K')$ can only include torsion elements 
trivialized by $\law$. In particular, $\Tt(K')=1$ and
$\alt(K)=m^++m^-\geq \Tt(K)-1$.
\end{proof}

For torus knots, we obtain the following corollary from our 
computations in Example~\ref{ex:torus-knot}.
Similar bounds may also be obtained using Upsilon invariants, 
c.f. \cite{Feller-etal} for the case $p<5$.

\begin{cor}\label{cor:alternation-for-T}
The alternation number of the torus knot $T_{p,pn+1}$
is at least $n\left\lfloor \frac{(p-1)^2}{4}\right\rfloor$.
\end{cor}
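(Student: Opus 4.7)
The plan is to apply Proposition~\ref{prop:alternation-for-K} with the choice of bound $\alt(K)\geq \nu^-(K)-\md(K)$, since both $\nu^-$ and $\md$ have been computed explicitly for $T_{p,pn+1}$ in Example~\ref{ex:torus-knot}. Everything should reduce to a short arithmetic identity, with no topological input beyond what was already established.

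First, I would record the two ingredients. From Corollary~\ref{cor-1:intro} (or directly from $i_n = \nu^-(K)$ at the end of Example~\ref{ex:torus-knot}) we have
\[
\nu^-(T_{p,pn+1}) = \frac{(p-1)(pn+1-1)}{2} = \frac{pn(p-1)}{2}.
\]
From Equation~\ref{eq:md-torus-knot} in Example~\ref{ex:torus-knot} we have
\[
\md(T_{p,pn+1}) = n\left\lfloor \frac{p^2}{4}\right\rfloor.
\]
Plugging these into $\alt(K)\geq \nu^-(K)-\md(K)$ gives
\[
\alt(T_{p,pn+1}) \;\geq\; \frac{pn(p-1)}{2} - n\left\lfloor\frac{p^2}{4}\right\rfloor
\;=\; n\left(\frac{p(p-1)}{2}-\left\lfloor\frac{p^2}{4}\right\rfloor\right).
\]

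The remaining step is to verify the elementary identity
\[
\frac{p(p-1)}{2}-\left\lfloor\frac{p^2}{4}\right\rfloor \;=\; \left\lfloor\frac{(p-1)^2}{4}\right\rfloor,
\]
which I would check by splitting on the parity of $p$. If $p = 2k$ is even, the left-hand side is $k(2k-1)-k^2 = k(k-1)$, and $\lfloor (2k-1)^2/4\rfloor = \lfloor (4k^2-4k+1)/4\rfloor = k^2-k$, so the two sides agree. If $p = 2k+1$ is odd, the left-hand side equals $k(2k+1)-(k^2+k) = k^2$, and $\lfloor (2k)^2/4\rfloor = k^2$, so again they match. Substituting this identity into the previous inequality yields the desired lower bound $\alt(T_{p,pn+1})\geq n\lfloor (p-1)^2/4\rfloor$.

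There is really no obstacle here: the work was done upstream, in Proposition~\ref{prop:alternation-for-K} (where the $\md$ bound on the alternation number was derived from the crossing-change maps) and in the explicit determination of $\imath(T_{p,pn+1})$ in Example~\ref{ex:torus-knot}. The only thing to watch is that the bound used in Proposition~\ref{prop:alternation-for-K} is $\nu^-(K)-\md(K)$ rather than one of the other two listed bounds, since for torus knots $\widehat{\Tt}(T_{p,pn+1})-1 = p-2$ and $\Tt(T_{p,pn+1})-1 = (p-1)(pn+1-1)/2 - 1$ are generally weaker than $n\lfloor (p-1)^2/4\rfloor$ for large $n$.
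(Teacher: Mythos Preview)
Your proof is correct and follows essentially the same route as the paper: apply the first inequality of Proposition~\ref{prop:alternation-for-K}, plug in $\nu^-(T_{p,pn+1})=n\binom{p}{2}$ and $\md(T_{p,pn+1})=n\lfloor p^2/4\rfloor$ from Example~\ref{ex:torus-knot}, and simplify. Your explicit parity check of the identity $\tfrac{p(p-1)}{2}-\lfloor p^2/4\rfloor=\lfloor (p-1)^2/4\rfloor$ is more detailed than what the paper records, but the argument is the same.
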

\begin{proof}
Using the first inequality in 
Proposition~\ref{prop:alternation-for-K} we have
\begin{align*}
\alt(T_{p,pn+1})&\geq  \nu^(T_{p,pn+1})-\md(T_{p,pn+1})
=n{p\choose 2}-n\left\lfloor\frac{p^2}{4}\right\rfloor=
n\left\lfloor\frac{(p-1)^2}{4}\right\rfloor.
\end{align*}
This completes the proof.
\end{proof}

\begin{figure}
\includegraphics[scale=0.5]{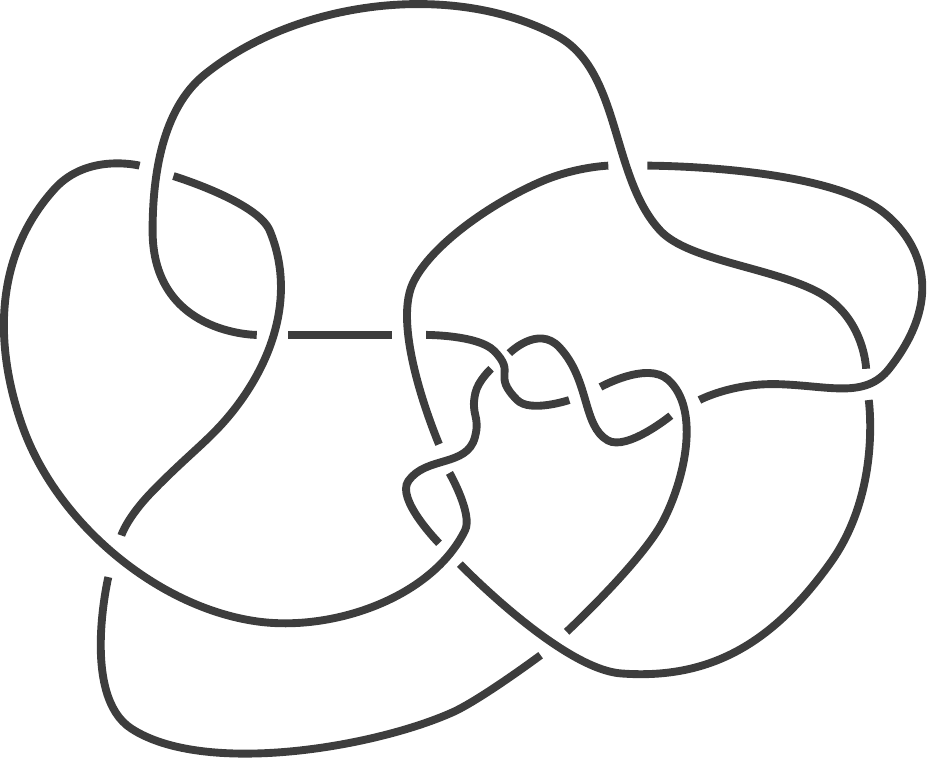}
\caption{
The knot $12n_{404}$
}\label{fig:12n404}
\end{figure}  
\begin{example}
The knot $12n_{404}$, which is a $(1,1)$ knot, is illustrated in 
 Figure~\ref{fig:12n404}. Using Rasmussen's notation 
 \cite[page 14]{Jake-oneone}, it is given by 
 the quadruple $[29,7,14,1]$. The corresponding chain complex
 $\CFT(12n_{404})$ may be computed combinatorially, e.g. using 
 Krcatovich's computer program \cite{Krcatovich}. 
 After a straight-forward change of basis, we arrive at the chain 
 complex illustrated in Figure~\ref{fig:CF-12n404}. 
 
\begin{figure}
\includegraphics[scale=0.35]{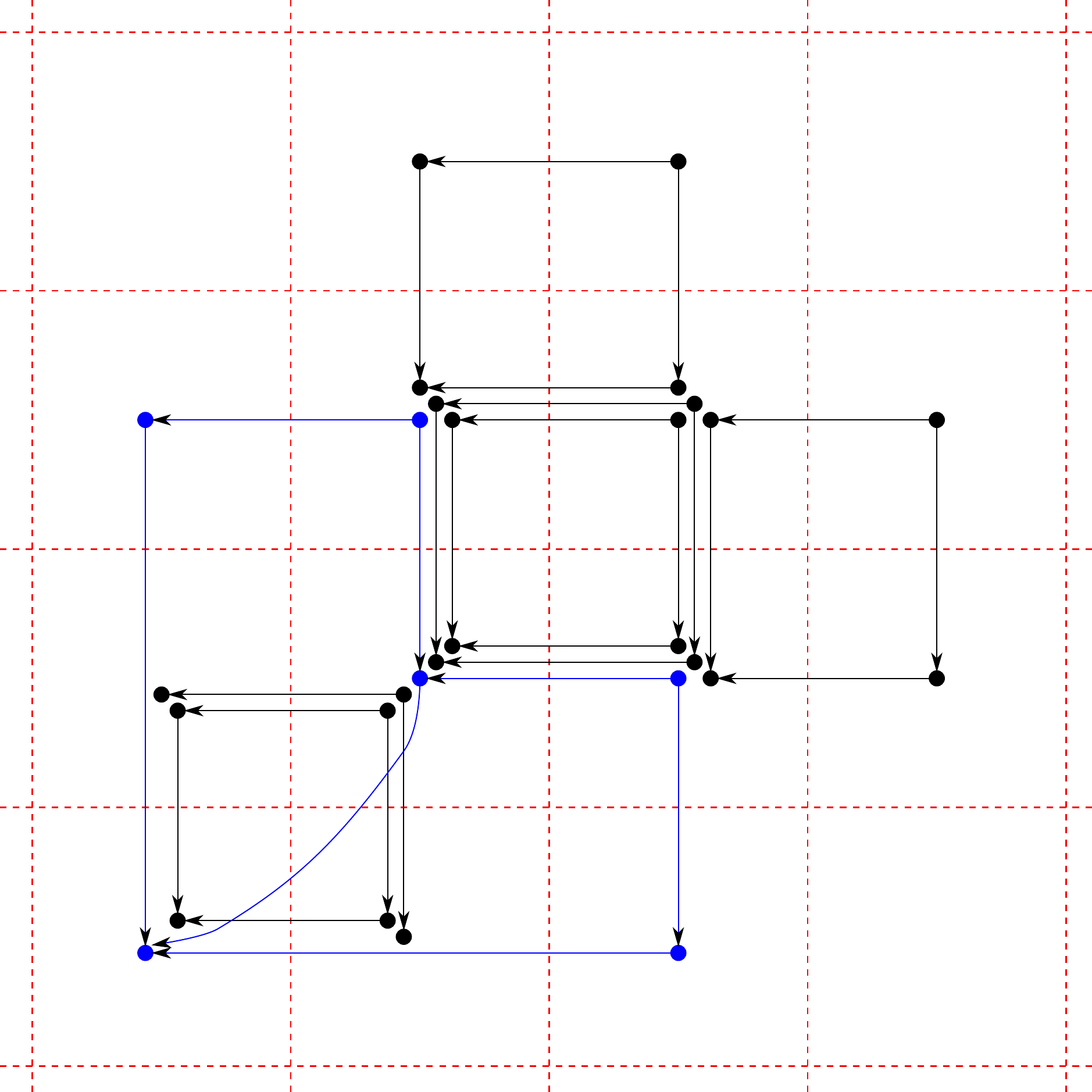}
\caption{
The chain complex associated with the knot $12n_{404}$.
}\label{fig:CF-12n404}
\end{figure}  
 
 Each dot 
 represents a generator of $\CFT (12n_{404})$. An arrow which 
 connects a dot corresponding to a generator $\x$ to a dot 
 representing a generator $\y$ and cuts $i$ vertical lines and 
 $j$ horizontal lines corresponds to the contribution of 
 $\la^i\law^j\y$ to $d(\x)$. The blue dots and the black dots 
 in the diagram  generate subcomplexes $C$ and $C'$ 
 of the knot chain complex, respectively, and we 
 obtain a decomposition  $\CFT(12n_{404})=C\oplus C'$. 
 We may then identify 
 \[C=\langle X,Y_0,Y_1,Y_2,Z_0,Z_1\rangle_\Abb,\quad
 d(Y_i)=\la^i\law^{2-i}X\quad \text{and}\quad d(Z_i)
 =\la Y_i+\law Y_{i+1}.\]
 The homology of $C$ is then generated by $\x=[X]$, with 
 $\law^2\x=\law\la\x=\la^2\x=0$. In particular, it follows that 
 $\Tt(12n_{404})\geq 2$. In fact, it is straightforward from the 
 above  presentation of chain complex to conclude that 
 $\Tt(12n_{404})=\widehat{\Tt}(12n_{404})=2$, while
 \[\depth^-(12n_{404})=\nu^-(12n_{404})=1,\quad
 \depth^+(12n_{404})=0\quad\text{and}\quad \depth(12n_{404})
 =\Tt(12n_{404})=2.\]
The knot $12n_{404}$ may be unknotted by changing $3$ 
crossings. It is not known, however, whether $u(12n_{404})$ is 
equal to $3$ or not. The alternation number $\alt(12n_{404})$ is 
$1$, which matches the lower bound given by the last two 
inequalities in Proposition~\ref{prop:alternation-for-K}.
\end{example}

\begin{example}
Consider the $(2,-1)$ cable of the torus knot $T_{2,3}$, 
which is denoted by $T_{2,3;2,-1}$. The chain complex associated 
with this knot is illustrated in Figure~\ref{fig:TCable-1}.

\begin{figure}
\def\svgwidth{5.5cm}
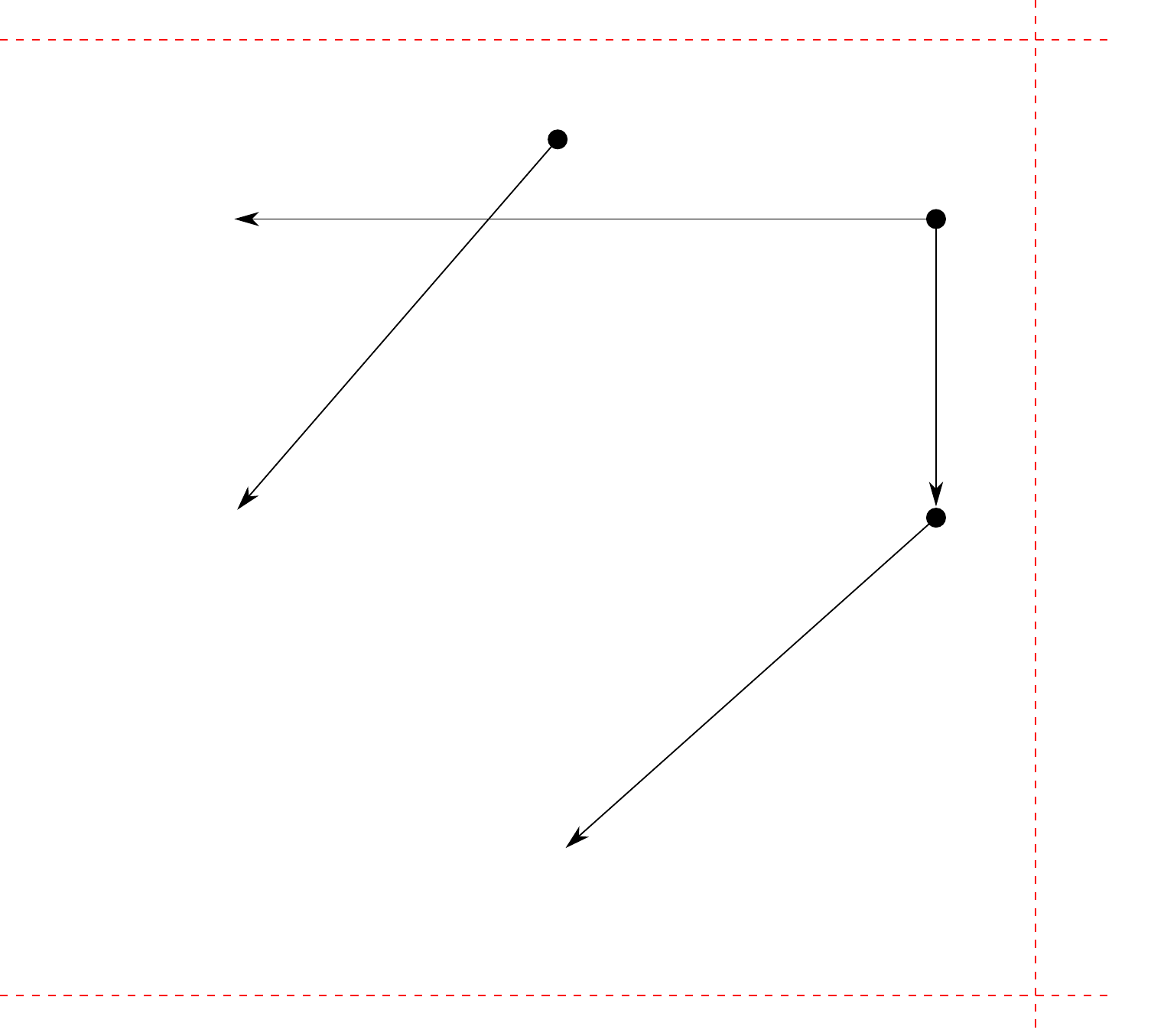
\caption{
The chain complex associated with the knot $T_{2,3;2,-1}$.
}\label{fig:TCable-1}
\end{figure} 

The chain complex is generated over $\Fbb[\la,\law]$ by the $9$ 
generator $X_1,X_2,Y_1,Y_2,Z_1,Z_2,Z_3,T_1$ and $T_2$. The 
differential is given by $d(T_i)=0$, $d(Y_i)=\la\law T_i$, for 
$i=1,2$ and  
\begin{align*}
&d(Z_1)=\law T_1,\quad d(Z_3)=\la T_1+\law T_2,\quad 
d(Z_2)=\la T_2,\\ &d(X_1)=\la Y_1+\la\law Z_3+\law^2 Z_2
\quad\text{and}\quad d(X_2)=\la^2 Z_1+\la\law Z_3+\law Y_2,
\end{align*}

The generators of homology may then be specified as 
$\tb_1=[T_1],\tb_2=[T_2], \y_1=[Y_1+\la Z_1]$ and  
$\y_2=[Y_2+\law Z_2]$, where we have 
\[\la \tb_1=\law \tb_2,\quad \law \tb_1=\la \tb_2=0
\quad\text{and}\quad\la \y_1=\law \y_2.\]
It thus follows that 
\[\Hbb(T_{2,3;2,-1})=\Abb(T_{2,3;2,-1})\oplus \Tbb(T_{2,3;2,-1})
=\langle \la,\law\rangle_\Abb\oplus 
\frac{\langle \la,\law\rangle_\Abb}
{\langle \la^2,\law^2 \rangle_\Abb}.\]
In particular, $\Tt(T_{2,3;2,-1})=\widehat\Tt(T_{2,3;2,-1})=2$, 
$\nu^-(T_{2,3;2,-1})=1$ and $\depth^-(T_{2,3;2,-1})=2$. Since 
the torsion invariant $\Tt(T_{2,3})$ is zero, 
it follows that the Gordian distance between 
$T_{2,3;2,-1}$ and the trefoil $T_{2,3}$ is at least $2$.
\end{example}

\begin{example}
Let us now consider the $(2,-3)$ cable of the torus knot $T_{2,3}$,
which is denoted by $T_{2,3;2,-3}$. We focus on the mirror image
$K=-T_{2,3;2,-3}$ of the aforementioned knot. The chain complex 
associated with $K$ is illustrated in Figure~\ref{fig:TCable-2}.

\begin{figure}
\def\svgwidth{6.5cm}
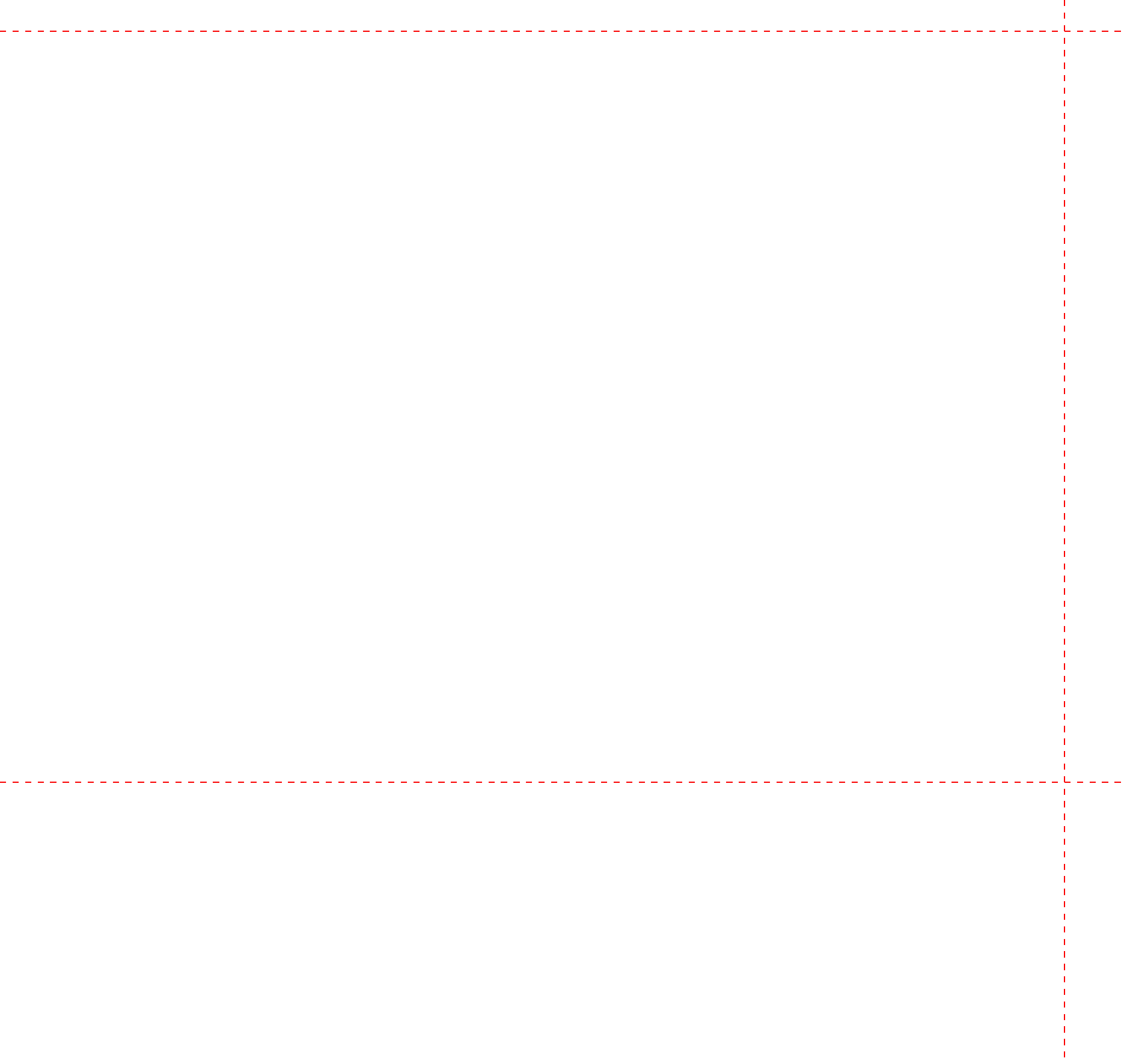
\caption{
The chain complex associated with the knot $-T_{2,3;2,-3}$.
}\label{fig:TCable-2}
\end{figure}

The chain complex is generated over $\Fbb[\la,\law]$ by $11$ 
generators $T_1,T_2,X_1,X_2,X_3,Y_1,Y_2,Z_1,Z_2,Z_3$ and $Z_4$.
The differential is given by $d(T_1)=d(T_2)=0$ and 
\begin{align*}
&d(Y_1)=\la T_1,\quad d(Y_2)=\law T_2,\quad, d(Z_1)=\law^2 T_1,
\quad d(Z_2)=\la^2 T_2,\quad d(Z_3)=\la\law T_1,\quad 
d(Z_4)=\la\law T_2,\\
&d(X_1)=\la Z_1+\law Z_3,\quad d(X_2)=\law Z_2+\la Z_4\quad
\text{and}\quad d(X_3)=\la Z_3+\law Z_4+\la\law(Y_1+Y_2).
\end{align*}
The homology of the above chain complex is generated by
 $\tb_1=[T_1],\tb_2=[T_2], \y_1=[Z_3+\law Y_1]$ 
and $\y_2=[Z_4+\la Y_2]$, while we also have
\[\la \tb_1=\law^2 \tb_1=\law \tb_2=\la^2 \tb_2=0\quad\text{and}
\quad \la \y_1=\law \y_2.\]
It thus follows that  
\[\Hbb(-T_{2,3;2,-3})=\Abb(-T_{2,3;2,-3})\oplus \Tbb(-T_{2,3;2,-3})
=\langle \la,\law\rangle_\Abb\oplus\left(
\frac{\Abb}{\langle \la,\law^2\rangle_\Abb}\oplus
\frac{\Abb}{\langle\la^2,\law\rangle_\Abb}\right).\]
By considering the dual complex, one can show that 
\[\Hbb(T_{2,3;2,-3})=\Abb\oplus
\frac{\langle \la,\law\rangle_\Abb}
{\langle \la^2,\law^2 \rangle_\Abb}.\]
In particular, we have $\nu^-(-T_{2,3;2,-3})=1$ and 
$\nu^-(T(2,3;2,-3))=0$ while the torsion invariants are 
non-trivial: 
\[\Tt^-(T_{2,3;2,-3})=\Tt^+(T(2,3;2,-3))=\widehat\Tt(T_{2,3;2,-3})
=\widehat\Tt(-T_{2,3;2,-3})=2.\]
\end{example}

\begin{example}
This example illustrates that $\Hbb(K)$ is not necessarily the 
direct sum of $\Abb(K)$ and $\Tbb(K)$. Let 
$K=T_{4,5}\#-T_{2,3;2,5}\#T_{2,3}$. The chain complex for $K$ 
is large, with many acyclic pieces. Nevertheless, it includes a 
direct summand, which we would like to study. Specifically, 
$\CFT(K)=C\oplus C'$, where  the chain complex $C$ is illustrated in 
Figure~\ref{fig:TCable-3} and the homology of $C'$ is freely generated 
by torsion elements $\tb_i$ such that $\la \tb_i=\law \tb_i=0$. 

\begin{figure}
\def\svgwidth{6.5cm}
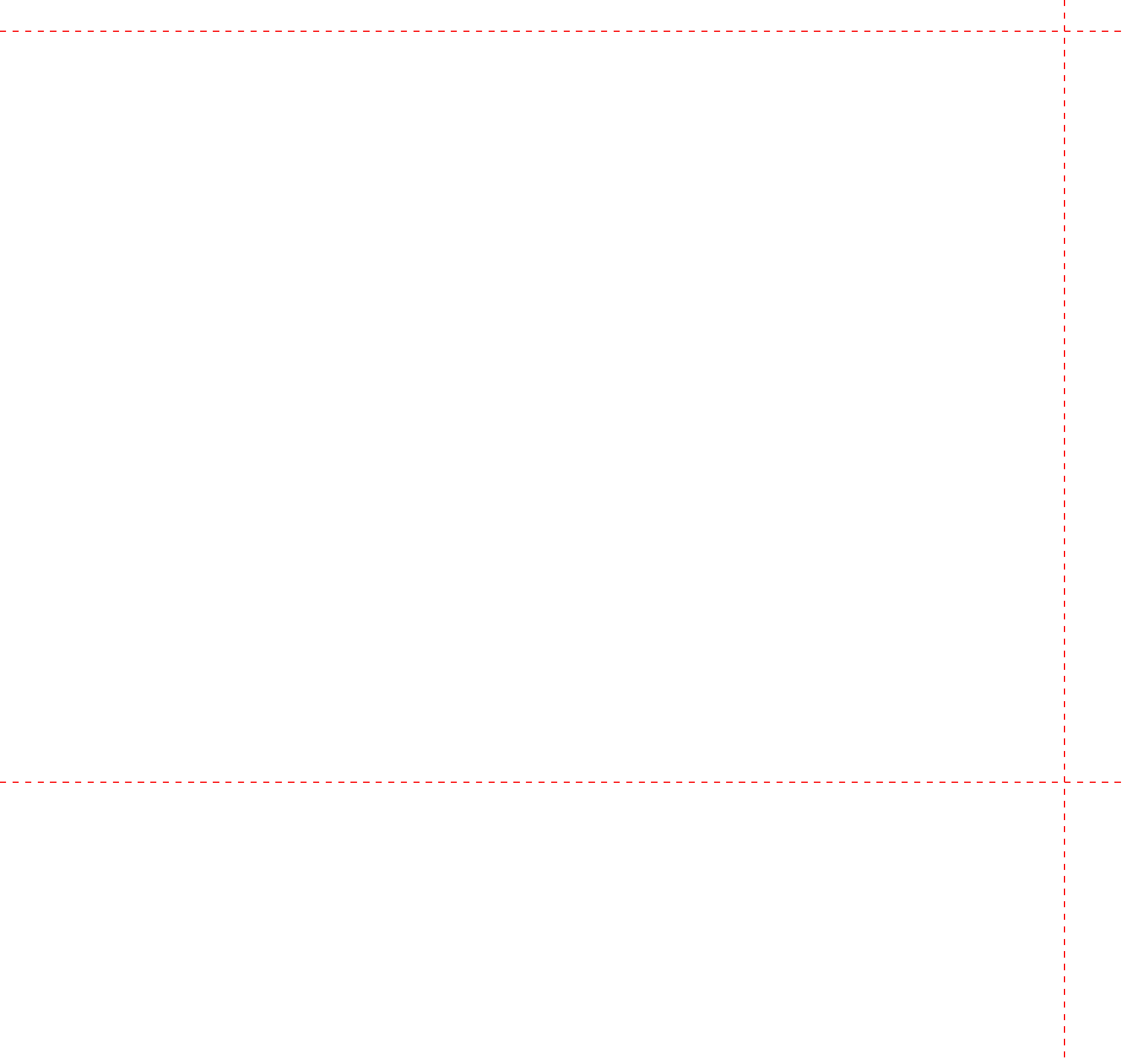
\caption{
The chain complex $C$ associated with the knot $T_{4,5}\#-T_{2,3;2,5}\#T_{2,3}$.
}\label{fig:TCable-3}
\end{figure}

The chain complex is generated over $\Fbb[\la,\law]$ by the $9$ 
generators $X_1,X_2,Y_1,Y_2,Z_1,Z_2,Z_3,Z_4$ and $T$. The 
differential is given by $d(Z_i)=0$ for $i=1,2,3,4$ and 
\begin{align*}
&d(Y_1)=\la Z_1+\law Z_2,\quad d(Y_2)=\la Z_3+\law Z_4,\quad
d(T)=\la X_1+\law X_2\\
& d(X_1)=\la\law Z_2+\law^2 Z_3\quad\quad 
\text{and}\quad\quad d(X_2)=\la^2 Z_2+\la\law Z_3.
\end{align*}
The homology of $C$ is then generated by the classes
$\z_i=[Z_i]$ for $i=1,2,3,4$, while we have
\[\la \z_1=\law \z_2,\quad \la \z_3=\law \z_4,\quad
\la\law \z_2=\law^2 \z_3\quad 
\text{and}\quad \la^2 \z_2=\la\law \z_3.\]
In particular, $\tb=\la \z_2-\law \z_3$ is a torsion element, and 
$\la \tb=\law \tb=0$.
We then have a short exact sequence 
\begin{displaymath}
\begin{diagram}
0&\rTo&\frac{\Abb}{\langle \la,\law\rangle_\Abb}&\rTo &
H_*(C)&\rTo &\Abb(K)=
\langle \la^3,\la^2\law,\la\law^2,\law^3\rangle_\Abb&
\rTo&0,
\end{diagram}
\end{displaymath}
which does not split. The chain complex $C$ is an illustration of 
pieces which may appear in a knot chain complex and make the homology
and the unknotting invariants interesting. The next {\emph{virtual}}
example gives another instance of this phenomenon.
\end{example}

\begin{example}
Let $C=C_{i,j}$ denote the chain complex generated over $\Abb$ 
by the generators $X_1,X_2,Y_1,Y_2$ and $Z$ with 
\[A(X_1)=-A(X_2)=i,\quad A(Y_1)=-A(Y_2)=j
\quad\text{and}\quad A(Z)=0.\]

 \begin{figure}
\def\svgwidth{5.5cm}
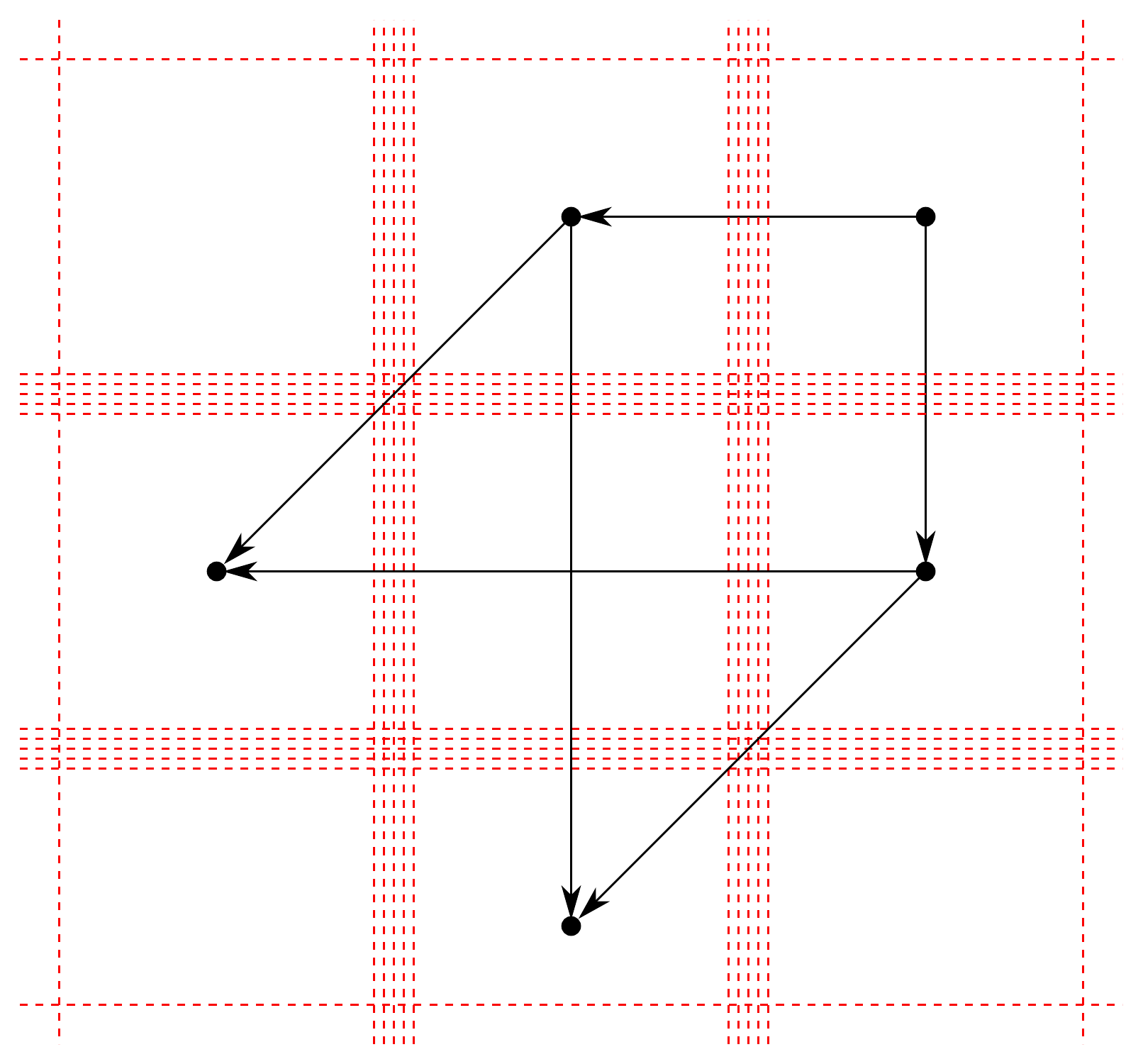
\caption{
The chain complex $C_{i,j}$.
}\label{fig:Cij}
\end{figure} 

The differential $d=d_{i,j}$ of $C$ is defined by setting
 $d(X_1)=d(X_2)=0$ and
\begin{align*}
&d(Y_1)= \la^{i}\law^{j}X_1+\law^{i+j}X_2,&&
d(Y_2)=\la^{i+j}X_1+\la^{j}\law^{i}X_2,&&
d(Z)=\la^jY_1+\law^jY_2.
\end{align*} 
Figure~\ref{fig:Cij} illustrates this chain complex. We treat
$C_{i,j}$ as a direct summand in a knot chain complex, or a 
{\emph{virtual}} knot chain complex.

It is then not hard to check that the homology group 
$\Hbb=\Hbb_{i,j}$ of $C$ is generated by the homology classes 
$\x_1=[X_1]$ and $\x_2=[X_2]$. Furthermore, $\tb=\la^ix_1+\law^ix_2$ 
is a torsion element in $\Hbb$. In fact, 
\[\law^{j}\tb=[dY_1]=0\quad\text{and}\quad\la^{j}\tb=[dY_2]=0.\]
Let us now assume that $\fmap^-:\Hbb\ra \Abb$ and 
$\fmap^+:\Abb\ra \Hbb$ are homogeneous maps of degrees $m^-$ and 
$m^+$, respectively. It follows that $\fmap^-(\x_1)=\law^{m^-+i}$ and 
$\fmap^-(\x_2)=\law^{m^-}\la^{i}$, while 
$\fmap^+(1)=\law^{m^+-i}\x_1$. But this implies that 
\[\law^{m^++m^-}\x_2=\fmap^+(\fmap^-(\x_2))=\law^{m^++m^--i}\la^i\x_1
\quad\Rightarrow\quad \law^{m^++m^--i}\tb=0.\]
In particular, $m^++m^--i\geq j$, or $m^++m^-\geq i+j$.
In other words, $\depth(C)\geq i+j$. It is then easy to conclude 
that $\depth(C)=i+j$, while $\depth^-(C)=\nu^-(C)=i$ and 
$\depth^+(C)=0$. Moreover, $\Tt(C)=j$. Thus, $C=C_{i,j}$  
gives an example with $\depth(C)=\nu^-(C)+\Tt(C)$. It is interesting
to note that in this example, 
$\widehat{\Tt}(C_{ij})=i+j$. 
\end{example}

\bibliographystyle{hamsalpha}
\bibliography{HFBibliography}
\end{document}